\theoremstyle{plain}
\newtheorem{theorem}{Theorem}[section]
\newtheorem{lemma}[theorem]{Lemma}
\theoremstyle{definition}
\newtheorem{assumption}[theorem]{Assumption}
\theoremstyle{remark}
\newtheorem{remark}[theorem]{Remark}
\newtheorem{example}{Example}
\newcommand{\blue}[1]{{\color{black}#1}}
\newcommand{\cL}{{\mathcal L}}
\newcommand{\cO}{{\mathcal O}}
\newcommand{\cP}{{\mathcal P}}
\newcommand{\cS}{{\mathcal S}}
\newcommand{\cX}{{\mathcal X}}
\newcommand{\bbE}{{\mathbb E}}
\newcommand{\bbP}{{\mathbb P}}
\newcommand{\EE}{{\bbE}}
\newcommand{\PP}{{\bbP}}
\newcommand{\bsalpha}{{\boldsymbol{\alpha}}}
\newcommand{\argmin}{{\operatorname{argmin}~}}
\renewcommand{\eqref}[1]{(\ref{#1})}
\icmltitlerunning{Learning to Stop: Deep Learning for MFOS}
\begin{document}

\twocolumn[
\icmltitle{Learning to Stop: Deep Learning for Mean Field 
Optimal Stopping}

\icmlsetsymbol{equal}{*}

\begin{icmlauthorlist}
\icmlauthor{Lorenzo Magnino}{equal,shanghai}
\icmlauthor{Yuchen Zhu}{equal,gt}
\icmlauthor{Mathieu Lauri\`ere}{shanghai,nyushanghai}
\end{icmlauthorlist}

\icmlaffiliation{shanghai}{Shanghai Frontiers Science Center of Artificial Intelligence and Deep Learning, NYU Shanghai}
\icmlaffiliation{nyushanghai}{NYU-ECNU Institute of Mathematical Sciences, NYU Shanghai}
\icmlaffiliation{gt}{Machine Learning Center, Georgia Institute of Technology}

\icmlcorrespondingauthor{Mathieu Lauri\`ere}{\texttt{mathieu.lauriere@nyu.edu}}

\icmlkeywords{Machine Learning, ICML, Deep Learning, Optimal Stopping}

\vskip 0.3in
]

\printAffiliationsAndNotice{\icmlEqualContribution} %

\begin{abstract}

Optimal stopping is a fundamental problem in optimization with applications in risk management, finance, robotics, and machine learning. We extend the standard framework to a multi-agent setting, named multi-agent optimal stopping (MAOS), where agents cooperate to make optimal stopping decisions in a finite-space, discrete-time environment. Since solving MAOS becomes computationally prohibitive as the number of agents is very large, we study the mean-field optimal stopping (MFOS) problem, obtained as the number of agents tends to infinity. We establish that MFOS provides a good approximation to MAOS and prove a dynamic programming principle (DPP) based on mean-field control theory. We then propose two deep learning approaches: one that learns optimal stopping decisions by simulating full trajectories and another that leverages the DPP to compute the value function and to learn the optimal stopping rule using backward induction. Both methods train neural networks to approximate optimal stopping policies. We demonstrate the effectiveness  and the scalability of our work through numerical experiments on 6 different problems in spatial dimension up to 300. To the best of our knowledge, this is the first work to formalize and computationally solve MFOS in discrete time and finite space, opening new directions for scalable MAOS methods.
\end{abstract}

\section{Introduction}
\begin{figure*}[t]
    \centering
    \begin{minipage}{1\textwidth}
        \centering
        \includegraphics[width=\linewidth]{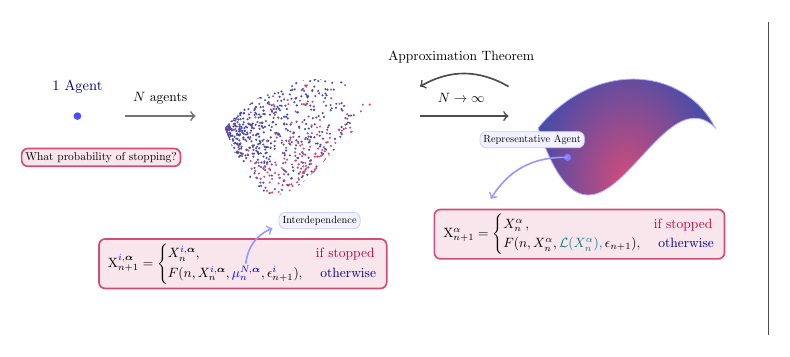}
        \label{fig:pdf_image}
    \vspace{-1em}
    \end{minipage}
    \caption{\textbf{Finite-agent vs mean-field:} Each agent decides to stop or continue. In the multi-agent setting, agents interact via the \color{blue}{empirical distribution}\color{black}. As $N\to\infty$, this interaction is captured by the \color{teal}{law of a representative agent}\color{black}. The approximation theorem (Thm~\ref{thm:epsilon-opt}) ensures that  MFOS is a good proxy for MAOS. %
    }
    \label{fig: cover} 
\end{figure*}

Optimal stopping (OS) has emerged as a powerful framework for addressing real-world problems involving uncertainty and sequential decision-making, where the goal is to determine the best time to stop a stochastic process to achieve a specific objective (see \cite{shiryaev2007optimal}, \cite{ekren2014optimal} for theoretical foundations; ~\citep{lippman1976economics} for the well-known secretary problem; and ~\citep{wang1993optimal,dai2019bayesian} for machine learning perspectives)

The OS framework has been extended to cover multi-agent scenarios, in which the aim is to stop several (possibly interacting) dynamical systems (agents) at different times in order to minimize a common cost function. We will refer to this setting as multi-agent optimal stopping (MAOS). It has gained significant importance in a variety of fields. In robotics, it has been applied to mission monitoring tasks, where multiple robots track the progress of other robots performing a specific operation~\citep{best2018decentralised}. In finance, the problem of pricing options with multiple stopping times (see ~\cite{10.1214/10-AAP727}) can be formulated as an MAOS problem, motivating recent studies~\citep{talbi2023dynamic,10.1214/24-AAP2064}.

\textbf{However, the problem's complexity increases drastically as the number of agents grows.} To address this challenge, we consider the limit as the number of agents tends to infinity and study mean-field approximations (see Fig. \ref{fig: cover}). In multi-agent control, this approach leads to the theory of Mean Field Control (MFC), which models large systems of interacting agents that cooperatively minimize a social cost by selecting optimal controls (see~\citep{MR3134900,MR3752669}). Applications include crowd motion \citep{achdou2016mean,achdou2019mean}, flocking \citep{fornasier2014mean}, finance ~\citep{carmonaa2023deep}, 
opinion dynamics~\citep{8899655}, and artificial collective behavior~\citep{carmona2019linear,gu2021meanfieldcontrolsqlearningcooperative,cui2024learning}. Computational methods for MFC problems include numerical methods for partial differential equations~\citep{MR3392611,MR3575615,MR3772008,reisinger2024fast}, numerical methods for backward stochastic differential equations~\citep{MR3914553,balata2019class}, deep learning methods~\citep{fouque2020deep,carmonalauriere2021convergenceI,germain2022deepsets,dayanikli2024deep} and reinforcement learning methods~\citep{carmona2019linear,gu2019dynamicmfc,chen2020mean,carmona2023model,cui2024learning}. 

In contrast with optimal control and Markov decision processes, mean-field approximations have not been used for OS problems, except for~\citep{10.1214/24-AAP2064,talbi2023dynamic,yu2023time,agram2024optimal,he2025limit} in the continuous time and continuous space setting, and \textbf{computational methods have not yet been developed}.

Our work takes a first step in this direction by focusing on discrete-time, finite-space MFOS models. We establish a theoretical foundation and introduce two deep learning methods capable of solving MFOS with many states by learning optimal stopping decisions as functions of the entire population distribution. We refer to these methods as the Direct Approach (DA) and the Dynamic Programming Approach (DP) and evaluate their performance across multiple environments (see Fig. \ref{fig:cover_2}).

\begin{mdframed}[hidealllines=true,backgroundcolor=blue!5, innertopmargin=0.2pt, innerbottommargin=5pt]
\paragraph{\textbf{Contributions.}} Our main contributions are twofold:

     \textit{Theoretically}: we provide a DPP for MFOS and we prove that MFOS in discrete space and time yields to an { approximate optimal stopping decision} for $N$-agent MAOS with a rate of $\mathcal{O}(1/\sqrt{N})$ (Thm~\ref{thm:epsilon-opt}). 
    
    \textit{Computationally}: we propose {two deep learning methods} to solve MFOS problems, by learning the optimal stopping decision as a function of the whole population distribution (Alg.~\ref{algo:DA-short} and~\ref{algo:DP-short}).
\end{mdframed}
To the best of our knowledge, this is the first work to study discrete-time, finite-space MFOS problems. Our theoretical results rely on the interpretation of MFOS problems as MFC problems, which provides a new perspective and opens up new directions to study MFOS problems.  Additionally, it is the first time that computational methods are proposed to solve MFOS. This is a first step towards solving complex MAOS problems with a large number of agents.  
\begin{figure}[!h]
    \centering
    \includegraphics[width=0.80\linewidth]{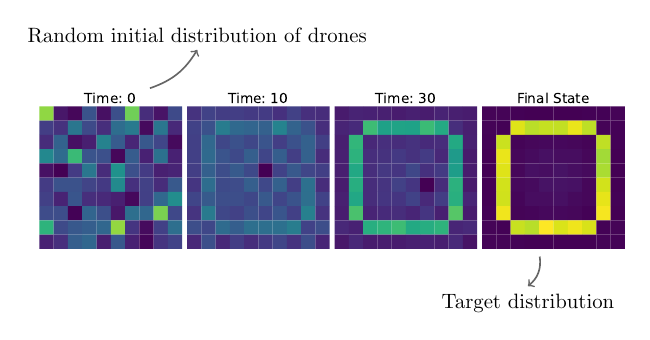}
    \caption{An infinite population of agents (e.g., drones) evolves from a random initial distribution to a target one by stopping each agent at the right time (Appx~\ref{app:ex6-details}).}
    \label{fig:cover_2}
\end{figure}

\noindent \textbf{Related works. }
MFOS has been recently studied in continuous time and space from a purely theoretical view by \citet{talbi2023dynamic,10.1214/24-AAP2064} who studied the connection with finite-agent MAOS problems and characterized the solution of (continuous) MFOS using a PDE on the infinite-dimensional space of probability measures, which is intractable. Instead, we focus on \emph{discrete time} scenarios with \emph{finite state space} (i.e., an individual agent's state can take only finitely many different values), and hence the distribution is finite dimensional. This setting can be viewed as an approximation of the continuous setting.  
Another difference with \citep{talbi2023dynamic,10.1214/24-AAP2064} is that these work purely rely on an OS viewpoint, while we unveil a connection with MFC problems. This is a conceptual contribution of our work. 
Some popular classical methods for OS problems are the least-square Monte Carlo approach of~\cite{longstaff2001valuing} for American options in finance and numerical methods for partial differential equations~\citep{achdou2005computational}. Early works on machine learning methods for optimal stopping problems include the dynamic programming approach of~\cite{tsitsiklis1999optimal}. 
Deep learning methods have been proposed for discrete time \emph{single-agent} OS problems.  \cite{becker2019deep} proposed to learn the stopping decision at each time using a deep neural network. \cite{herrera2023optimal} extended the approach using randomized neural networks. \cite{damera2024deep} proposed to use recurrent neural networks to solve non-Markovian OS problems. 
Other approaches have been proposed, particularly for continuous-time OS problems, such as learning the stopping boundary~\citep{reppen2022neural}. These single-agent OS approaches cannot be easily adapted to solve MAOS problems: the solution obtained by treating the whole system as one agent would lead to stopping all the agents at the same time, and, more importantly, single-agent methods do not capture the interdependence between agents.  Furthermore, these approaches are not suitable to tackle continuous space MFOS problems as introduced by~\citet{talbi2023dynamic} because the value function must be a function of the population distribution, which leads to an infinite-dimensional problem. For this reason, there are \emph{no existing computational methods for MFOS}.

\section{Model}

When the number of agents tends to infinity, an aggregation effect takes place, allowing us to represent the influence of the community using an “average” term, commonly referred to as the \textit{mean-field} term. As the number of agents approaches infinity, they become independent and identically distributed (i.i.d.), and the behavior of each individual agent is determined by a stochastic differential equation (SDE) of McKean-Vlasov type. This phenomenon is often known as the “propagation of chaos”~\citep{sznitman1991topics}.
The objective is to discern the properties of the solutions to the limiting problem. By integrating these properties into the formulation of the $N$-agent control framework, we can derive approximate solutions to the latter problem (for more theoretical background on MFC, see \citep{MR3134900,MR3045029,MR3752669}).

\subsection{Motivation and challenges: finite agent model}

The mean-field problem that we will solve is motivated by the $N$-agent problem that we are about to describe. Let $\mathcal{X}$ be a finite state space. Let us denote by $\cP(\cX)$ the set of probability distributions on $\cX$, and let $E$ be the set of realizations of the random noise. 
Let $T$ be a time horizon and let $N$ be the number of agents that are interacting.. Each agent $i$ has a state denoted by $X^i_n$ at time $n$. At time $n$, each agent stops with probability $p_n^i(\boldsymbol X_n^{\boldsymbol{\alpha}})$, where $p_T^i(\cdot) = 1$.  We introduce $\alpha^i_n$ a random variable taking value $0$ if the agent continues and $1$ if it stops. We denote by $\pi_n^i(\cdot |\boldsymbol X_n^{\boldsymbol{\alpha}})=Be(p_n^i(\boldsymbol X_n^{\boldsymbol{\alpha}}))$ its distribution, which is a Bernoulli distribution. 
We denote by $\boldsymbol{X}_n^\alpha = (X^1_n, \dots, X^N_n)$ and $
\boldsymbol{\alpha} = (\alpha^1, \dots, \alpha^N)$ the vectors of states and stopping decisions at time $n$. 

{\bf Dynamics. } We assume that the agents are indistinguishable and interact in a symmetric fashion, i.e. through their empirical distribution $
    \mu_n^{N,\boldsymbol{\alpha}}(x):=\frac{1}{N}\sum_{i=1}^N\delta_{X_n^{i,\boldsymbol{\alpha}}}(x),
$
which is the proportion of agents at $x$ at time $n$ \blue{with $\delta$ the indicator function}. The system evolves according to a transition function $F: \mathbb{N} \times\cX\times\cP(\cX)\times E\to\cX$. 
In particular:  
for every $i=1, \dots, N$, 
\begin{equation}
    \label{eq: N-agent dyn}
    \left\{
\begin{split}
   &X_0^{i,\boldsymbol\alpha} \sim \mu_0 \qquad \alpha_n^i \sim \pi_n^i(\cdot |\boldsymbol X_n^{\boldsymbol{\alpha}}),\\
   & 
   X_{n+1}^{i,\boldsymbol{\alpha}} = 
   \begin{cases}
       F(n, X_n^{i,\boldsymbol{\alpha}}, \mu_n^{N,\boldsymbol{\alpha}}, \epsilon_{n+1}^i), &\hbox{if } \sum_{m=0}^n \alpha_m^i = 0\\
       X_n^{i,\boldsymbol{\alpha}}, &\hbox{otherwise, }
   \end{cases}
\end{split}
\right.
\end{equation}
where $\epsilon^i_n$ is a random noise that affects the evolution of agent $i$ and $m_0$ is the initial distribution.

Let us define the stopping time for agent $i$:
$
    \tau^i = \inf\{n\geq 0 : \sum_{m=0}^n\alpha_m^i\geq1\},
$
which is the first time for player $i$ that the decision is to stop. Note that, since $p_T^i(\cdot)=1$, this means that $\tau^i\leq T$ for all $i=0, \dots, N$.

{\bf Objective function. } Let us consider a function $\Phi: \mathcal{X} \times \mathcal{P}(\mathcal{X}) \to \mathbb{R}$. $\Phi(x,\mu)$ denotes the cost that an agent incurs if she stops at $x$ and the current population distribution is $\mu$. The goal for all the $N$ agents is to collectively minimize the following social cost function: 
\begin{equation}
    \label{eq: Cost N-agent}
    J^N(\alpha^1, \dots, \alpha^N) = \mathbb{E}\left[\frac{1}{N}\sum_{i=1}^N\Phi(X_{\tau^i}^{i,\boldsymbol{\alpha}}, \mu_{\tau^i}^{N, \boldsymbol{\alpha}})\right].
\end{equation}
The problem consists in finding the best controls $(\alpha^1, \dots, \alpha^N) \in \argmin J^N$. 
\blue{Next, we give an example. %

\textbf{Motivating Example:  }\label{ex: motivating}
We take as a state space $\mathcal{X} = \{1,2,3,4,5,6,7\}$ with boundaries (i.e., in 1 agents cannot move left and in 7 they cannot move right), time horizon $T=3$, transition function $F(n,x,\mu,\epsilon)= x+ \epsilon$, where $\epsilon = 0$ with probability $p=1/2$, $\epsilon = 1$ with probability $p=1/4$ and $\epsilon = -1$ with probability $p=1/4$. Following \eqref{eq: N-agent dyn}, the dynamics of agent $i$ is: $X_{n+1}^i  = X_n^i + \epsilon_{n+1}^i$ if the agent does not stop, and $X_{n+1}^i = X_n^i$ otherwise.
All agents start in $x=4$.
We define a target distribution $\rho_{\mathrm{target}}=\frac{1}{2}\delta_4 + \frac{1}{4}\delta_5 + \frac{1}{4}\delta_3$. 
If the agent $i$ stops at time $n$, then she is incurred the cost: $\Phi(X_n^i,\mu_n^N) = \sum_{x \in \mathcal{X}} |\mu_n^N(x) - \rho_{\mathrm{target}}(x)|^2$, which is smaller if the agent stops when the population distribution matches the target one. 
\textbf{Notice that some agents might have to stop even though the target distribution is not matched, so that other agents can later have a lower cost because this is a \textit{cooperative} task. }
Solving exactly this problem (i.e., finding the optimal stopping time for every agent) is very complex. Our approach is to consider the mean-field problem, which leads to an efficient approximate solution (see Example~4 in Section~\ref{sec:ex}).
}
\begin{mdframed}[hidealllines=true,backgroundcolor=red!6, innertopmargin=6pt, innerbottommargin=5pt]
\textbf{Challenges:} Single-agent methods cannot be readily applied to the multi-agent setting since they cannot capture the interdependence due to the distribution in the cost and in the dynamics. In particular, in the multi-agent setting, we allow agents to stop at different times. When the number of agents is very large, computing exactly the optimal stopping times is infeasible. Mean-field optimal stopping (MFOS) can intuitively provide an approximate solution, but (1) this needs to be justified, and (2) scalable numerical methods for MFOS need to be developed.
\end{mdframed}
\subsection{Mean-field model}
As mentioned earlier, if we let the number of players tend to infinity, we expect, thanks to propagation of chaos type results, that the states will become independent and each state will have the same evolution, which will be a nonlinear Markov chain. %
More precisely, passing formally to the limit in the dynamics \eqref{eq: N-agent dyn}, we obtain the following evolution: 
\begin{equation}
    \label{eq: MF dyn}
    \left\{
\begin{split}
   &X_0^{\alpha} \sim \mu_0 \qquad\alpha_n \sim \pi_n(\cdot | X_n^{\alpha}) = Be(p_n( X_n^{\alpha}))\\
   &
   X_{n+1}^{\alpha} = 
   \begin{cases}
       F(n, X_n^{\alpha}, \mu_n^{\alpha}, \epsilon_{n+1}), &\hbox{ if } \sum_{m=0}^n \alpha_m = 0\\
       X_n^{\alpha}, &\hbox{ otherwise, }
   \end{cases}
\end{split}
\right.
\end{equation}
where $p_n(x)$ denotes the probability with which the agent continues if she is in state $x$ at time $n$, and $\mu_n^{\alpha}$ is the distribution of $X_n^\alpha$ itself, which we may also denote by $\mathcal{L}(X_n^\alpha)$. 
We want to emphasize the fact that the introduction of randomized stopping times for individual agents is crucial for our purpose; see the example in Appx.~\ref{sec:why rand}. 

We can define, in the same way we did before, the first time at which the control $\alpha$ is $1$ as
$
    \tau:=\inf\{n\geq 0 : \sum_{m=0}^n\alpha_m\geq1\}.
$
Then the social cost function in the mean-field problem is defined as: 
\begin{equation}
    \label{eq: Cost MF}
    J(\alpha) = \mathbb{E}\bigg[\Phi(X_\tau^\alpha, \mathcal{L}(X_\tau^\alpha))\bigg].
\end{equation}
Notice that here the expectation has the effect of averaging over the whole population, so there is no counterpart to the empirical average that appears in the finite agent cost~\eqref{eq: Cost N-agent}.  To stress the dependence on the initial distribution, we will sometimes write $J(\alpha, m_0)$.

\begin{figure*}[!th]
    \centering
   \begin{subfigure}[b]{1.3\columnwidth}
   \centering
        \includegraphics[width=\columnwidth]{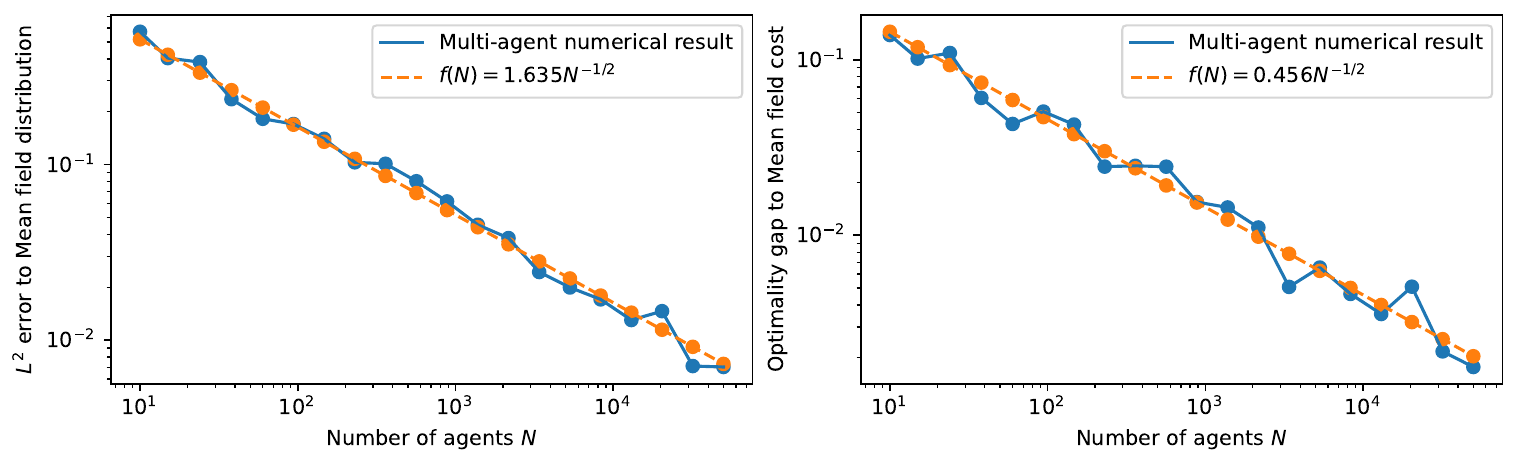}
    \end{subfigure}
    \begin{subfigure}[b]{1.3\columnwidth}
    \centering
        \includegraphics[width=\columnwidth]{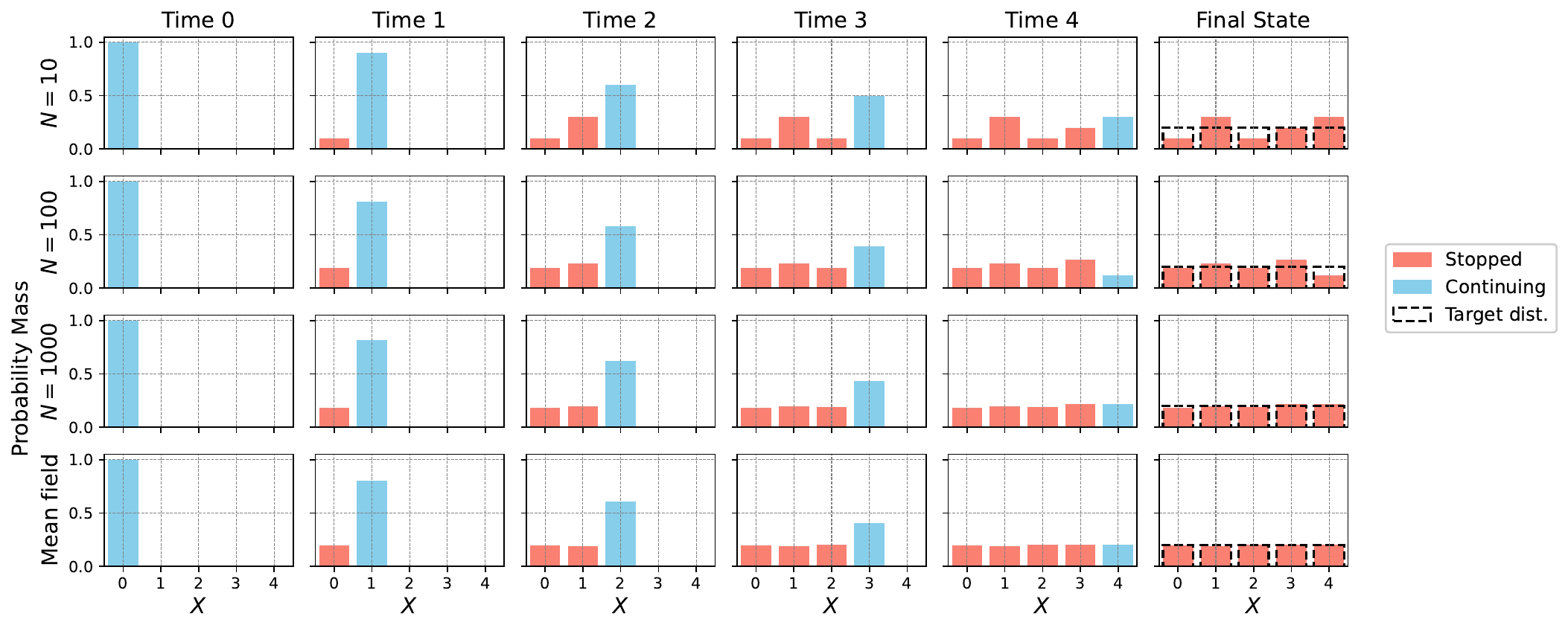}
    \vspace{-2em}
    \end{subfigure}
    \caption{MFOS v.s. MAOS. We use the stopping probability function learned by Algorithm 1 for MFOS  to simulate the multi-agent OS.  \textbf{Top left: } $L^2$ distance of multi-agent empirical distribution to mean-field distribution, averaged over $10$ runs. \textbf{Top right: } Optimality gap between multi-agent and mean-field cost, averaged over $10$ runs. \textbf{Bottom: } Evolution of multi-agent empirical distribution for different agents $N$.} 
    \label{fig:mfos-maos} %
    \vspace{-1em}
\end{figure*}

\subsection{Mean-field model with extended state}\label{sec:extend-state}%

A key step towards building efficient algorithms is dynamic programming, which relies on the Markovian property. However, in its current form, the above problem is not Markovian. %
To ensure Markovianity, we need to keep track of the information about whether the player's process has been stopped in the past. This information is not contained in the state, so we need to extend the state. %
Let $A^\alpha = (A_n^\alpha)_{n=0,\dots, T}$ the process such that $A_n^\alpha=0$ if the agent has \textit{already} stopped before time $n$, and $1$ otherwise. We can interpret this process as the “Alive” process, while $\alpha$ stands for the “action”, namely, to stop or not. So $A^\alpha_n=1$ means the agent has not stopped yet; when the agent stops, $\alpha_n=1$ and $A^\alpha_{n+1}$ switches to $0$. It is important to notice that if the agent is stopped precisely at time $n$ then, we still have $A_n^\alpha = 1$ but $A_m^\alpha = 0$ for every $m>n$. We define the \emph{extended state} as: 
    $Y_n^\alpha = (X_n^\alpha, A_n^\alpha ),$
which takes value in the extended state space $\mathcal{S}:=\mathcal{X}\times\{0,1\}$. 
Then, the dynamics~\eqref{eq: MF dyn} of the representative player can rewritten as: 
\begin{equation}
    \label{eq: MF extended dyn}
    \left\{
\begin{split}
   &X_0^{\alpha} \sim \mu_0,\qquad A_0^\alpha = 1  \\
   &\alpha_n \sim \pi_n(\cdot |X_n^{\alpha})=Be(p_n( X_n^{\alpha})); \, \, A_{n+1}^\alpha = A_n^\alpha (1-\alpha_n)\\
   &X_{n+1}^{{\alpha}} = 
   \begin{cases}
       F(n, X_n^{{\alpha}}, \mathcal{L}(X_n^\alpha), \epsilon_{n+1}),\hbox{if } A_n^\alpha(1-\alpha_n) = 1 \\
       X_n^{\alpha}, \qquad \hbox{ otherwise. }
   \end{cases}
\end{split}
\right.
\end{equation}
The idea of extending the state using the extra information is similar to~\citet{talbi2023dynamic} in continuous time and space.
The mean-field social cost~\eqref{eq: Cost MF} can rewritten as:
\begin{equation}\label{eq:MF cost1}
    J(\alpha) = \mathbb{E}\bigg[\sum_{m=0}^T\Phi(X_m^\alpha, \mathcal{L}(X_m^\alpha))A_m^\alpha \alpha_m\bigg]
\end{equation}
Actually, notice that the expectation amounts to taking a sum with respect to the extended state's distribution. Let us denote by $\nu^p_n = \mathcal{L}(Y^\alpha_n)$ the distribution at time $n$. 
We denote $\nu^p_X$ the first marginal of $\nu^p$ (sometimes also denoted by $\mu$).
Note that it does not depend on $\alpha$ but only on the stopping probability $p$, so we use the superscript $p$ when referring to $\nu$. This distribution evolves according to the mean-field dynamics:
\begin{equation}
\label{eq:mf dynamics nu-p}
    \left\{
    \begin{split}
        &\nu^p_0(x,0) = 0, \quad \nu^p_0(x,1) = \mu_0(x), \qquad x \in \mathcal{X},  
        \\
        &\nu^p_{n+1} = \bar{F}(\nu^p_{n}, p_n),
    \end{split}
    \right.
\end{equation}
where the function $\bar{F}$ is defined as follows. We denote by $\mathcal{H}$ the set of all functions $h: \mathcal{X} \to [0,1]$, which represent a stopping probability (for each state). Then, $\bar{F}: \{0,\dots,T\} \times \mathcal{P}(\mathcal{S}) \times \mathcal{H} \to \mathcal{P}(\mathcal{S})$ is defined, for every $x \in \mathcal{X}, a \in \{0,1\}$, by:
\begin{align}\label{def: F bar}
    (\bar{F}(\nu,h))(x,a) &=
    \bigg(\nu(x,0)  +  \nu(x,1)h(x)\bigg)(1-a) + \notag\\
    &\bigg(\sum_{z \in \mathcal{X}}\nu(z,1)\bigg(q_{z,x}^\nu(1-h(z))\bigg)\bigg)a,
\end{align}
where $\mathcal{Q^\nu} = (q_{z,x}^\nu)_{z,x \in \mathcal{X}}$ is the transition matrix associated to the unstopped process $X$, i.e. $q_{z,x}^\nu$ is the probability to go from the state $z$ to the state $x$ knowing that we are not going to stop in $x$. Notice that in general the transitions may depend on $\nu$ itself,
which is why this type of dynamics is sometimes referred to \emph{nonlinear} dynamics.
The mean-field social cost can be rewritten purely in terms of the distribution as follows:
\begin{equation}
    \label{eq: Cost extended MF}
    J(p) = \sum_{m=0}^T \sum_{(x,a)\in\mathcal{S}}\nu^p_m(x,a)\Phi(x, \mu_m^{p})ap_m(x), 
\end{equation}
where $p:\{0,\dots,T\}\times\mathcal{X}\to[0,1]$ is the function that associates to every time step and state the probability to stop (in that state at that time). Let us define $\mathcal{P}_{0,T}$ to be the set of all such functions.

The link with the formulation in \eqref{eq:MF cost1} is that $\alpha_n(x)$ is distributed according to $Be(p_m(x))$, and $\nu^p_m:=\mathcal{L}(Y^\alpha_m)$ is the extended state's distribution. Moreover, $\nu_m^p(x,0)$ is the mass in $x$ that has stopped. Last,  $\mathcal{L}(X^\alpha_m) = \mu^\alpha_m(x) = \sum_{a \in \{0,1\}} \nu^p_m(x,a)$ is the first marginal of this distribution.

\section{\blue{Approximate optimality for finite-agent model}}\label{sec: approx-optim}
In this section, we aim to address the following questions:
{\bf Q:}\textit{“Is the mean-field model capable of solving the original problem of $N$ agents? If so, in what sense?”}

Specifically, we demonstrate that the MFOS solution provides an approximately optimal solution for the finite-agent MAOS problem. 
The main assumption we use is:
\begin{assumption}
\label{assp:lip}
Let $L_p>0$ and let us define $\cP:=\{p:\{0,\dots,T\} \times \cX \times \cP(\cS) \to [0,1]: p \text{ is } L_p \text{-Lipschitz}\}$, the set of all possible admissible policies $p$. 
Assume that the \textit{mean-field} dynamics $\bar{F}$ described in \eqref{def: F bar} is $L_{\bar{F}}$ - Lipschitz.
Assume also that the function $\Psi : \mathcal{P}(\mathcal{X}\times\{0,1\}) \times \mathcal{P}(\mathcal{X}) \to \mathbb{R}$ defined as 
$\Psi(\nu,h):=\sum_{(x,a) \in \mathcal{S}}\nu(x,a)\Phi(x,\nu_X)a h(x)
$
is $L_\Psi$-Lipschitz.
\end{assumption}
Assuming Lipschitz dynamics, cost, and policies is classical in the literature on mean-field control problems, see e.g.~\citep{mondal2022approximation,pasztor2023efficient,cui2023learning} and can be achieved using neural networks~\citep{araujo2022unified}. Due to space constraints, we simply provide an informal statement here. The precise statement is deferred to Appx.~\ref{app:proof-thm-epsilon-opt}, see Theorem~\ref{thm:epsilon-opt-precise}, along with the detailed setting and notations. 
\begin{mdframed}[hidealllines=true,backgroundcolor=blue!5, innertopmargin=7pt, innerbottommargin=2pt]
\begin{theorem}[$\varepsilon$-approximation of the $N$-agent problem]
\label{thm:epsilon-opt}
Suppose Assumption~\ref{assp:lip} holds.
 If $p^*$ is the optimal policy for the MFOS problem and $\hat{p}$ is the optimal policy for the $N$-agent problem (when all the agents use the same policy), then: as $N \to +\infty$, 
$
J^N(p^*, \dots, p^*) - J^N(\hat{p}, \dots, \hat{p}) \to 0,
$
with rate of convergence $\cO\left(1/\sqrt{N}\right)$ (the explicit bound is given in the proof).
\end{theorem}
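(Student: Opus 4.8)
The plan is to reduce the theorem to a single \emph{propagation-of-chaos} estimate and then close the argument with a sandwich that exploits optimality on both sides. The key lemma I would establish first is that there is a constant $C$, depending on $T$ and on the Lipschitz constants $L_{\bar{F}}, L_p, L_\Psi$ of Assumption~\ref{assp:lip} but \emph{not} on the policy, such that
\[
|J^N(p,\dots,p) - J(p)| \le C/\sqrt{N} \qquad \text{for every } p \in \cP.
\]
Granting this, the result is immediate. Since $\hat{p}$ minimizes $J^N(\cdot,\dots,\cdot)$ and $p^*$ minimizes the mean-field cost $J(\cdot)$ over the \emph{same} admissible class $\cP$, we have $J^N(\hat{p},\dots,\hat{p}) \le J^N(p^*,\dots,p^*)$ and $J(p^*) \le J(\hat{p})$. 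Inserting $\pm J(p^*)$ and $\pm J(\hat{p})$ gives the decomposition $J^N(p^*,\dots,p^*) - J^N(\hat{p},\dots,\hat{p}) = [J^N(p^*,\dots,p^*) - J(p^*)] + [J(p^*)-J(\hat{p})] + [J(\hat{p}) - J^N(\hat{p},\dots,\hat{p})]$, where the first and third brackets are each $\le C/\sqrt{N}$ by the lemma and the middle bracket is $\le 0$ by mean-field optimality of $p^*$. The difference is thus squeezed between $0$ and $2C/\sqrt{N}$, which is the announced $\cO(1/\sqrt{N})$ rate. The uniformity of $C$ over $\cP$ is exactly what lets me apply the lemma to the (possibly $N$-dependent) optimizer $\hat{p}$.

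The substance is the lemma, which I would obtain from the bound $\mathbb{E}[\|\nu_n^N - \nu_n^p\|] \le C_n/\sqrt{N}$, proved by induction on $n$, where $\nu_n^N := \frac1N\sum_i \delta_{Y_n^i}$ is the empirical distribution of the extended states and $\nu_n^p$ is the deterministic mean-field flow of \eqref{eq:mf dynamics nu-p}. For the base case, the initial states are i.i.d. $\sim \mu_0$, so the standard $\sqrt{N}$ concentration of an empirical measure on the finite space $\cS$ gives $\mathbb{E}[\|\nu_0^N - \nu_0^p\|] \le C_0/\sqrt{N}$. For the induction step I would condition on the filtration $\mathcal{F}_n$ and write
\begin{align*}
\nu_{n+1}^N - \nu_{n+1}^p
&= \big(\nu_{n+1}^N - \bar{F}(\nu_n^N, p(n,\cdot,\nu_n^N))\big) \\
&\quad + \big(\bar{F}(\nu_n^N, p(n,\cdot,\nu_n^N)) - \bar{F}(\nu_n^p, p(n,\cdot,\nu_n^p))\big).
\end{align*}
The first bracket is a centered fluctuation: conditionally on $\mathcal{F}_n$ the agents transition independently with the common one-step kernel, so its conditional mean vanishes and, being an average of $N$ bounded independent increments on the finite space $\cS$, its conditional $L^2$-norm is $\cO(1/\sqrt{N})$. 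The second bracket is controlled by the Lipschitz hypotheses: $\bar{F}$ is $L_{\bar{F}}$-Lipschitz and $p$ is $L_p$-Lipschitz in its measure argument, so it is bounded by $L_{\bar{F}}(1+L_p)\|\nu_n^N - \nu_n^p\|$. Taking expectations yields the recursion $C_{n+1} \le C' + L_{\bar{F}}(1+L_p) C_n$, which unrolls over the finite horizon to $C_n \le C_T/\sqrt{N}$ with $C_T$ exponential in $T$ but independent of $N$.

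To transfer this to the costs, I would put both costs in extended-state form. Taking the conditional expectation of the Bernoulli decisions $\alpha_m^i \sim Be(p(m, X_m^i, \nu_m^N))$ turns the $N$-agent cost \eqref{eq: Cost N-agent} into $J^N(p,\dots,p) = \mathbb{E}[\sum_m \Psi(\nu_m^N, p(m,\cdot,\nu_m^N))]$, while the mean-field cost is the deterministic $J(p) = \sum_m \Psi(\nu_m^p, p(m,\cdot,\nu_m^p))$ with the \emph{same} $\Psi$ as in Assumption~\ref{assp:lip}. Since $\nu \mapsto \Psi(\nu, p(m,\cdot,\nu))$ is Lipschitz with constant $\sim L_\Psi(1+L_p)$, we get $|J^N(p,\dots,p) - J(p)| \le (T+1)L_\Psi(1+L_p)\max_m \mathbb{E}[\|\nu_m^N - \nu_m^p\|] \le C/\sqrt{N}$, proving the lemma.

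The hard part is the induction step, and within it two points need care. First, the conditional-mean identity $\mathbb{E}[\nu_{n+1}^N \mid \mathcal{F}_n] = \bar{F}(\nu_n^N, p(n,\cdot,\nu_n^N))$ must be verified: given the common empirical measure and the alive flags, the agents' stopping decisions and noises are conditionally independent, so that $\bar{F}$ is \emph{exactly} the one-step conditional law. Second, one must close the feedback loop created by the distribution-dependent policy: the $N$-agent dynamics evaluate $p$ at the random $\nu_n^N$ whereas the mean field evaluates it at $\nu_n^p$, and it is precisely the Lipschitz continuity of $p$ in the measure argument that prevents this feedback from destroying the estimate. The finiteness of $\cS$ is used throughout to obtain clean $\cO(1/\sqrt{N})$ concentration and uniform boundedness; the price is a constant growing like $(L_{\bar{F}}(1+L_p))^T$, which is harmless since $T$ is fixed.
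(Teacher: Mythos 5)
Your proposal is correct and follows essentially the same route as the paper: the same three-term decomposition with the middle term killed by mean-field optimality of $p^*$, the same inductive propagation-of-chaos estimate $\sup_{p\in\cP}\mathbb{E}[\|\nu_n^{N,p}-\nu_n^p\|]=\cO(1/\sqrt{N})$ (split into a conditionally centered fluctuation plus a Lipschitz term with constant $L_{\bar F}(1+L_p)$), and the same transfer to costs via the Lipschitz property of $\Psi$. The only difference is organizational — you package the cost comparison as a standalone uniform lemma, whereas the paper states the measure-convergence lemma and does the cost estimate inside the theorem's proof — which does not change the argument.
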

\end{mdframed}
A key step in the proof consists in analyzing the difference between the $N$-agent dynamics and the mean-field dynamics under a stopping policy, see Lemma~\ref{lem:conv of meas} (“Convergence of the measure”) in the Appendix.

Theorem~\ref{thm:epsilon-opt} is further supported through empirical evidence as is shown in Fig. \ref{fig:mfos-maos}, where we apply the stopping probability function learned by Algorithm~\ref{algo:DA-short} on MFOS in Example 1 (see Section~\ref{sec:algs} and~\ref{sec:ex} for details) to the $N$-agent problem with varying $N$ (see Appx.~\ref{app:ex1-details}). We compute the $L^2$ distance of multi-agent empirical distribution to mean-field distribution and the optimality gap between multi-agent and mean-field cost, both averaged over $10$ runs. The plots demonstrate a clear decay rate of order $N^{-1/2}$. \textit{This theorem justifies that MFOS is not only an intrinsically interesting problem, but the solution to MFOS also serves as an approximate solution to the corresponding MAOS problem}. In the sequel, we will focus on solving the MFOS problem.

\section{Dynamic programming}
Our motivation for developing a dynamic programming principle (DPP) for our formulation comes from both the literature and numerical purposes. 
In the control theory of a dynamic system, it has been studied and used very often to find solutions to a given optimization problem. Moreover, implementing an algorithm that is built on DPP often leads to precise optimal solutions that perform better than other methods. 

We introduce the dynamical form of the social cost \eqref{eq: Cost extended MF} as:
\begin{align}
    \label{value function}
    V_n(\nu)&
    :=\inf_{p \in \mathcal{P}_{n,T}}J(p(x), \nu) \\
    := & \inf_{p \in \mathcal{P}_{n,T}}\sum_{m=n}^T \sum_{(x,a)\in\mathcal{S}}\nu^{p,\nu,n}_m(x,a)\Phi(x, \mu_m^{p,\nu, n})ap_m(x), \notag 
\end{align}
where $\mathcal{P}_{n,T}$ is the set of all possible function $p:\{n,\dots,T\}\times\mathcal{X}\to[0,1]$ and $\nu^{p,\nu,n}$ denotes the distribution of the process that starts at time $n$ with a given distribution  $\nu$; it satisfies~\eqref{eq:mf dynamics nu-p} but starting at time $n$ instead of $0$ with $\nu^{p,\nu,n}_n = \nu$. 
The optimal value at time $0$ will be denoted: $V^*(\nu) = V_0(\nu)$, which is also equal to $\inf_{p} J(p, \nu)$. We can now state and prove the following DPP. 
\begin{mdframed}[hidealllines=true,backgroundcolor=blue!5, innertopmargin=7pt, innerbottommargin=2pt]
\begin{theorem}[Dynamic Programming Principle] \label{thm:DPP-MF}
    For the dynamics given by \eqref{eq: MF extended dyn} and the value function given by \eqref{value function} the following dynamic programming principle holds:
\begin{equation}
\label{DPP}
\left\{
\begin{split}
    &\textstyle V_T(\nu) = \sum_{(x,a)\in\mathcal{S}}\nu(x,a)\Phi(x, \nu_X)a, \\
    &\textstyle  V_n(\nu) = \inf_{h \in \mathcal{H} }\sum_{(x,a)\in\mathcal{S}}\nu(x,a)\Phi(x, \nu_X)ah(x) \\
    & \qquad \qquad + V_{n+1}(\bar{F}(\nu, h)), \qquad n < T,
\end{split}
\right.
\end{equation}
where $\nu_X$ is the first marginal of the distribution $\nu$, i.e., $\nu_X(x) = \nu(x,0)+\nu(x,1)$. The sequence of optimizers define an optimal stopping decision that we will denote by $h^*: \{0,\dots,T-1\} \times \mathcal{X} \times \mathcal{P}(\mathcal{S}) \to [0,1]$ and satisfies: for every $n \in \{0,\dots,T-1\}$ and every $\nu \in \mathcal{P}(\mathcal{S})$, $V_n(\nu) = \sum_{(x,a)\in\mathcal{S}}\nu(x,a)\Phi(x, \nu_X)ah^*_n(x,\nu) + V_{n+1}(\bar{F}(\nu, h^*_n(x,\nu)))$. 
\end{theorem}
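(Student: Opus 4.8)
The plan is to observe that, once we follow the deterministic flow of distributions $\nu^{p,\nu,n}$ produced by \eqref{eq:mf dynamics nu-p}, the mean-field stopping problem is exactly a \emph{deterministic}, finite-horizon control problem on the compact state space $\mathcal{P}(\mathcal{S})$: the state at time $m$ is the measure $\nu_m$, the control is $p_m = h \in \mathcal{H}$, the dynamics $\nu_{m+1} = \bar{F}(\nu_m, p_m)$ are deterministic, and the cost \eqref{value function} is the additive sum of running costs $\Psi(\nu_m, p_m)$ over $m = n,\dots,T$, with $\Psi$ the functional from Assumption~\ref{assp:lip}. Writing $J_n(p,\nu) := \sum_{m=n}^T \Psi(\nu^{p,\nu,n}_m, p_m)$ so that $V_n(\nu) = \inf_{p \in \mathcal{P}_{n,T}} J_n(p,\nu)$, the DPP then follows from the textbook backward-induction argument, the only structural inputs being the additivity of the cost and the semigroup property of the dynamics.

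First I would dispatch the terminal condition: since $p_T \equiv 1$ is forced, $J_T(p,\nu)$ reduces to the single running cost $\sum_{(x,a)\in\mathcal{S}}\nu(x,a)\Phi(x,\nu_X)a$, which is $V_T(\nu)$. For $n < T$, I would peel off the time-$n$ term and use the flow property of \eqref{eq:mf dynamics nu-p} --- namely that the trajectory restarted at $n+1$ from $\bar{F}(\nu,p_n)$ coincides with $(\nu^{p,\nu,n}_m)_{m > n}$, which is immediate from the recursion --- to obtain the decomposition
\[
J_n(p,\nu) = \Psi(\nu, p_n) + J_{n+1}\big(p, \bar{F}(\nu, p_n)\big).
\]
Here the first term depends on the control only through $p_n$, and the tail $J_{n+1}$ depends on the tail controls $(p_{m})_{m>n}$ only through the restart point $\bar{F}(\nu,p_n)$.

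Next I would split the infimum along the factorization $\mathcal{P}_{n,T} \cong \mathcal{H} \times \mathcal{P}_{n+1,T}$ into an outer infimum over $p_n = h$ and an inner infimum over the tail. Since neither $\Psi(\nu,h)$ nor $\bar{F}(\nu,h)$ sees the tail controls, the inner infimum is exactly $V_{n+1}(\bar{F}(\nu,h))$ by definition, and both directions of the interchange are routine ($\le$ by inserting a near-optimal tail for each fixed $h$, and $\ge$ by restricting any admissible $p$ to its tail). This is the recursion in \eqref{DPP}. To obtain the optimizer $h^*_n$, I would note that $\mathcal{X}$ finite makes $\mathcal{H} = [0,1]^{|\mathcal{X}|}$ compact, and that $V_{n+1}$ is continuous (indeed Lipschitz, by backward induction from the Lipschitz continuity of $\bar{F}$ and $\Psi$ in Assumption~\ref{assp:lip}); hence $h \mapsto \Psi(\nu,h)+V_{n+1}(\bar{F}(\nu,h))$ is continuous on a compact set, attains its minimum, and a standard measurable-selection argument yields $h^*_n$ as a function of $(x,\nu)$.

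The main obstacle is conceptual rather than computational: I would write the flow/decomposition step and the interchange of infima most carefully, since both rest on the fact that the lifted dynamics are genuinely Markovian --- deterministic and memoryless in $\nu$ --- which is precisely what licenses replacing the tail cost by $V_{n+1}$ evaluated at $\bar{F}(\nu,h)$. The remaining points (terminal value, compactness, attainment, selection) are routine consequences of the finiteness of $\mathcal{X}$ and the Lipschitz hypotheses.
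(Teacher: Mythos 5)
Your proposal is correct and follows essentially the same route as the paper: both lift the problem to a deterministic, finite-horizon control problem on $\mathcal{P}(\mathcal{S})$ with state $\nu_m$, control $p_m \in \mathcal{H}$, dynamics $\bar{F}$, and additive running cost $\Psi$, and then invoke the standard backward-induction DPP. The only difference is that you spell out the cost decomposition, the interchange of infima, and the compactness/continuity argument for attainment of $h^*_n$, whereas the paper delegates these steps to the cited literature on mean-field Markov decision processes.
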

\end{mdframed}
To prove this result, we will show that we can reduce the problem to a mean-field optimal control problem in discrete time and continuous space. %
$
$ %
See details in Appx.~\ref{sec:proof-thm-dpp}.
Dynamic programming for MFC problem~\citep{MR3258261,MR3631380} and mean-field MDPs~\citep{gu2019dynamicmfc,motte2022mean,carmona2023model} have been extensively studied, and DPP for continuous time MFOS has been established by~\cite{talbi2023dynamic} using a PDE approach. 
However, to the best of our knowledge, this is the first DPP result for MFOS problems in discrete time. It serves as a building block for one of the deep learning methods proposed below.

\begin{algorithm}[!thbp]
    \caption{Dynamic Programming (DP)}
    \label{algo:DP-short}
    \begin{algorithmic}[1]
    \Require 
    stopping decision neural networks: $\psi^{n}_{\theta}: \mathcal{X} \times \mathcal{P}(\mathcal{S})\to[0,1]$ for $n \in \{0, \dots, T - 1\}$,  
    max training iteration $N_{\texttt{iter}}$.
    \State Set $\psi_{\theta}^T = 1$
    \For{$n = T - 1, \dots, 0$} 
    \For{$k = 0, \dots, N_{\texttt{iter}} - 1$}
        \State Sample $\nu^{p}_n$ 
        \For{$m = n, \dots, T$}
        \If {$ m = n$}
        \State $p_m(x) = \psi^{m}_{\theta}(x, \nu^{p}_m; \theta_{k}^{n})$ 
        \Else
        \State $p_{m}(x) = \psi^{m}_{\theta}(x, \nu^{p}_{m}; \theta^{m, *})$ 
        \EndIf
        \State $\ell_m = \sum_x \nu^{p}_{m}(x, 1) \Phi(x, \mu_m) p_m(x)$
        \State $\nu^p_{m+1} = \bar{F}(\nu^p_{m}, p_m)$ 
        \EndFor
        \State $\ell = \sum_{m = n}^{T} \ell_m$ 
        \State $\theta^{n}_{k+1} = \texttt{optim\_up}(\theta_k^{m}, \ell(\theta^{n}_k))$ 
    \EndFor
    \State Set $\theta^{n, *} = \theta_{N_{\texttt{iter}}}^{n}$ 
    \EndFor
\State \Return $(\psi^{n}_{\theta^{n, *}})_{n = 0, \dots, T}$
    \end{algorithmic}
\end{algorithm}

In fact, we can show that this DPP still holds for a restricted class of randomized stopping times in which all the agents (regardless of their own state) have the same probability of stopping. 
Let $\tilde\cP_{n,T}$ be the set of $p:\{0,\dots,T\}\to[0,1]$. Notice that here $p_n$ does not depend on the individual state $x$. At every time step $n=m$, all agents have the same probability to stop $p_m$, i.e., for every $x \in \cX$ at time $n=m$, $p_n(x) = p_n$. 
We call this set \textit{synchronous} stopping times.
Let us define the value:
\begin{equation*}
        \tilde V_n(\nu)
        :=\inf_{p \in \tilde\cP_{n,T}}\sum_{m=n}^T p_m\sum_{(x,a)\in\mathcal{S}}\nu^{p,\nu,n}_m(x,a)\Phi(x, \mu_m^{p,\nu, n})a.
\end{equation*}
\vspace{-0.5em}
\begin{mdframed}[hidealllines=true,backgroundcolor=blue!5, innertopmargin=7pt, innerbottommargin=2pt]
\begin{theorem}
   For the setting of \textit{synchronous} stopping times, the value function satisfies:
\begin{equation}
\label{DPP-synchro}
\left\{
\begin{split}
    &\textstyle \tilde V_T(\nu) = \sum_{(x,a)\in\mathcal{S}}\nu(x,a)\Phi(x, \nu_X)a, \\
    &\textstyle \tilde V_n(\nu) = \inf_{h \in [0,1] }\sum_{(x,a)\in\mathcal{S}}\nu(x,a)\Phi(x, \nu_X)ah \\
    & \qquad \qquad+ V_{n+1}(\bar{F}(\nu, h)), \qquad n < T. 
\end{split}
\right.
\end{equation}
\end{theorem}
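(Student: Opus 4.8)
The plan is to follow the proof of Theorem~\ref{thm:DPP-MF} essentially verbatim, observing that the synchronous setting is obtained simply by restricting the per-step control set from $\mathcal{H}$ (state-dependent stopping probabilities) to the constants $[0,1]$ (a single stopping probability shared by all states). Indeed, once the problem is rewritten purely in terms of the extended-state distribution as in~\eqref{eq: Cost extended MF}, the object of interest is a deterministic discrete-time control problem on $\mathcal{P}(\mathcal{S})$: the ``state'' is the measure $\nu_m$, the dynamics is the deterministic first-order recursion $\nu_{m+1}=\bar F(\nu_m,p_m)$ from~\eqref{eq:mf dynamics nu-p} (where for synchronous $p$ the map $\bar F(\cdot,p_m)$ is applied with the constant function $h\equiv p_m$), and the cost $\tilde V_n(\nu)=\inf_{p\in\tilde\cP_{n,T}}\sum_{m=n}^T p_m\sum_{(x,a)\in\mathcal{S}}\nu^{p,\nu,n}_m(x,a)\Phi(x,\mu^{p,\nu,n}_m)a$ is additive over time. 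None of the structure exploited in Theorem~\ref{thm:DPP-MF} --- additivity of the running cost and determinism plus first-order Markovianity of the measure flow --- is affected by shrinking the admissible set to constants.

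First I would record the terminal condition: since $p_T=1$, the only contribution at $m=T$ is $\sum_{(x,a)\in\mathcal{S}}\nu(x,a)\Phi(x,\nu_X)a$, which gives $\tilde V_T(\nu)$ as stated. Next I would establish the flow (semigroup) property of the measure dynamics: for $m\ge n+1$, the distribution $\nu^{p,\nu,n}_m$ depends on $(p_n,\dots,p_{m-1})$ and $\nu$ only through $\nu^{p,\nu,n}_{n+1}=\bar F(\nu,p_n)$, because~\eqref{eq:mf dynamics nu-p} is a deterministic first-order recursion. Consequently the tail cost accrued from time $n+1$ onward, optimized over $(p_{n+1},\dots,p_T)$, coincides with $\tilde V_{n+1}$ evaluated at $\bar F(\nu,p_n)$.

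The DPP then follows by splitting the infimum. Because $\tilde\cP_{n,T}$ factorizes as $[0,1]\times\tilde\cP_{n+1,T}$ --- the time-$n$ probability is chosen independently of the later ones --- I may write
\begin{align*}
\tilde V_n(\nu) &= \inf_{h\in[0,1]}\Bigg[\, h\sum_{(x,a)\in\mathcal{S}}\nu(x,a)\Phi(x,\nu_X)a \\
&\qquad +\inf_{(p_{n+1},\dots,p_T)}\sum_{m=n+1}^{T}p_m\sum_{(x,a)\in\mathcal{S}}\nu^{p,\bar F(\nu,h),n+1}_m(x,a)\Phi(x,\mu_m)a\,\Bigg],
\end{align*}
and by the semigroup property the inner infimum equals $\tilde V_{n+1}(\bar F(\nu,h))$, which yields the claimed recursion~\eqref{DPP-synchro}. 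The two inequalities ($\le$ from plugging any optimal tail policy, $\ge$ from concatenating the time-$n$ choice with an $\varepsilon$-optimal tail) are handled exactly as in the unrestricted case.

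I expect the main --- and essentially only --- obstacle to be bookkeeping rather than anything conceptual: one must verify that restricting to constant controls leaves the measure-flow semigroup property intact and that the first-step running cost separates cleanly from the tail, both of which are inherited directly from the proof of Theorem~\ref{thm:DPP-MF}. I would also flag that the recursion is closed within the synchronous value function, so the $V_{n+1}$ appearing on the right-hand side of~\eqref{DPP-synchro} should be read as $\tilde V_{n+1}$: the argument restarts the \emph{same} synchronous subproblem at time $n+1$ from the distribution $\bar F(\nu,h)$, and in general $\tilde V_{n+1}\ge V_{n+1}$ with possible strict inequality whenever $\Phi(\cdot,\mu)$ is genuinely state-dependent.
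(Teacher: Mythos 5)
Your proposal is correct and matches the paper's intent exactly: the paper itself omits the proof, stating only that it ``follows the same argument as that of Theorem~\ref{thm:DPP-MF},'' i.e.\ the reduction to a deterministic control problem on $\mathcal{P}(\mathcal{S})$ with the admissible per-step controls restricted from $\mathcal{H}$ to the constants $[0,1]$, which is precisely what you carry out. Your observation that the $V_{n+1}$ on the right-hand side of~\eqref{DPP-synchro} should be read as $\tilde V_{n+1}$ is a correct catch of a typo in the statement, since the recursion must close within the synchronous value function.
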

\end{mdframed}
The proof follows the same argument as that of Theorem~\ref{thm:DPP-MF}, therefore we omit it here.

\begin{algorithm}[!thbp]
    \caption{Direct Approach (DA)}
    \label{algo:DA-short}
    \begin{algorithmic}[1]
    \Require time-dependent stopping decision neural network: $\psi_{\theta}: \{0,\dots,T\}\times\mathcal{X} \times \mathcal{P}(\mathcal{S})\to[0,1]$, max number of training iteration $N_{\texttt{iter}}$
    \For{$k = 0, \dots, N_{\texttt{iter}} - 1$}
        \State Sample initial $\nu^{p}_0$
        \For{$n = 0, \dots, T$}
        \State $p_n(x) = \psi_{\theta}(x, \nu^{p}_n, n; \theta_k)$,$x\in \mathcal{X}$ 
        \State 
        $\ell_n = \bar{\Phi}(\nu^p_n, p_n)$ 
        \State $\nu^p_{n+1} = \bar{F}(\nu^p_{n}, p_n)$ 
        \EndFor
        \State $\ell = \sum_{n = 0}^{T} \ell_n$ 
        \State $\theta_{k+1} = \texttt{optim\_up}(\theta_k, \ell(\theta_k))$ 
    \EndFor
    \State Set $\theta^* = \theta_{N_{\texttt{iter}}}$
    \State \Return $\psi_{\theta^*}$
    \end{algorithmic}
\end{algorithm}
\vspace{-2em}
\section{Algorithms}
\label{sec:algs}

To address the MFOS problem numerically, we propose two approaches based on two different formulations. As the most naive approach, we can attempt to directly minimize the mean-field social cost $J(p)$ stated in \eqref{eq: Cost extended MF}, where we optimize over all the possible stopping probability functions $p:\{0,\dots,T\}\times\mathcal{X}\to[0,1]$. A more ideal treatment is to leverage the Dynamic Programming Principle (DPP) discussed in Theorem \ref{thm:DPP-MF} and solve for the optimal stopping probability using induction backward in time. For each of the timestep $n$, we implicitly learn the true value function $V_{n}(\nu)$ by solving the optimization problem in \eqref{DPP}, where we search over all possible one-step stopping probability function $h: \mathcal{X} \to [0,1]$ for each time $n$. We refer to the method of directly optimizing mean-field social cost as the direct approach (DA) and the attempt to solve MFOS via backward induction of the DPP approach. Short versions of the pseudocodes are presented in Alg.~\ref{algo:DA-short} and~\ref{algo:DP-short}. Long versions are in Appx.~\ref{app:algos} (see Alg.~\ref{algo:DA-long} and~\ref{algo:DP-long}). To alleviate the notations, we denote: $\bar{\Phi}(\nu,h) = \sum_{x \in \mathcal{X}} \nu(x, 1) \Phi(x, \nu_X) h(x)$, which represents the one-step mean-field cost. In the code, $\texttt{optim\_up}$ denotes one update performed by the optimizer (e.g. Adam in our simulations).

\vspace{-1em}
\section{Experiments}\label{sec:ex}

\begin{figure*}[!th]%
    \centering
    \begin{subfigure}{0.7\linewidth}
    \includegraphics[width=1\linewidth]{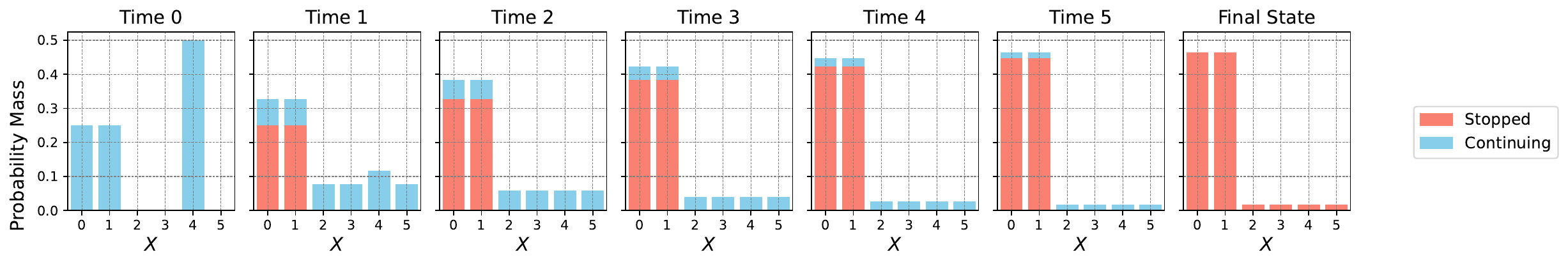} 
    \end{subfigure}
    \begin{subfigure}{0.7\linewidth}
    \includegraphics[width=1\linewidth]{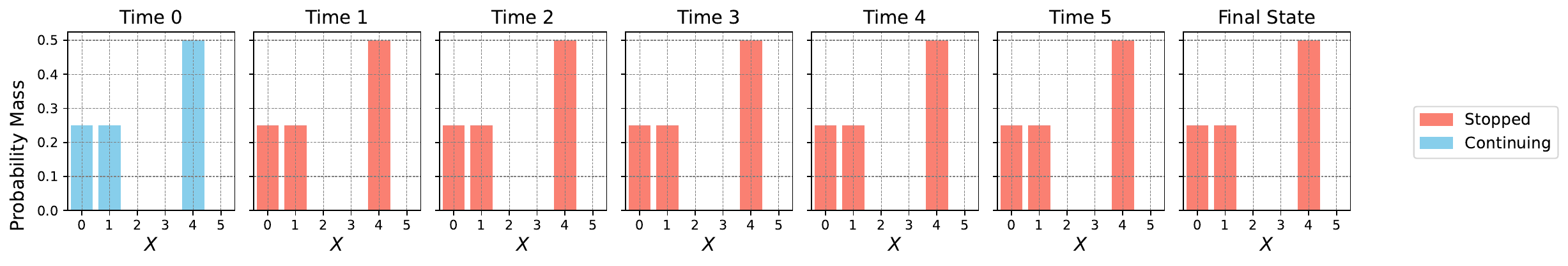} 
    \end{subfigure}
    \vspace{-1em}
    \caption{
    Example 3. DA results, asynchronous vs synchronous stopping times. Comparison of the evolution of the distribution after training (asynchronous stopping class on top, synchronous stopping class on bottom).
    }
    \label{fig:DR_CongestionDice}
    \vspace{-0.5em}
\end{figure*}

\begin{figure*}[!ht]%
    \begin{center}
    \begin{subfigure}{0.8\linewidth}
    \includegraphics[width=1\linewidth]{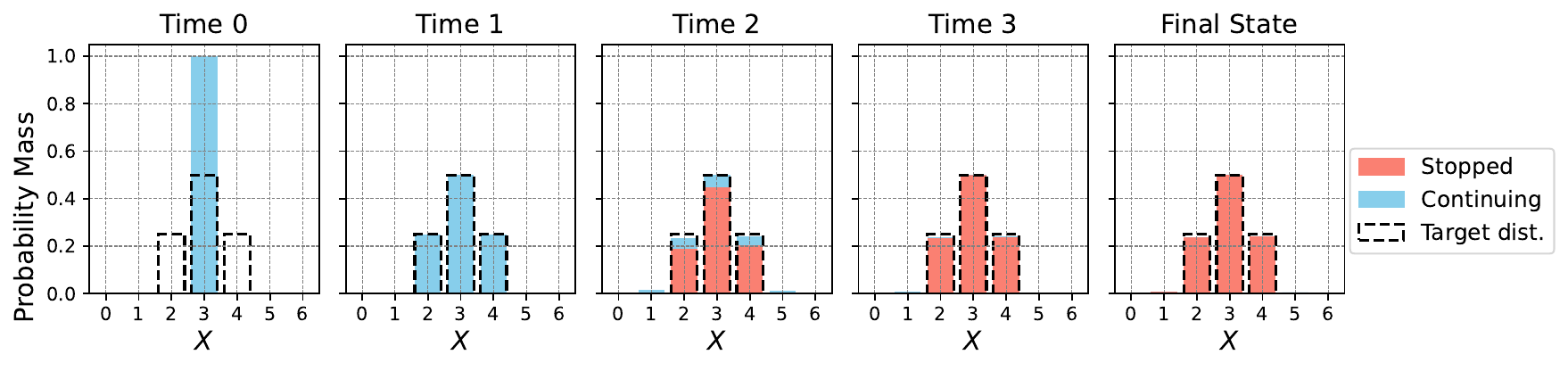}
    
    \end{subfigure}
    \vspace{-1em}
    \caption{
    Example 4. DA results, asynchronous stopping. Evolution of the distribution after training.
    }
    \label{fig:DR_DistCost}
    \end{center}
\vspace{-1.5em}
\end{figure*}

In this section, we present $6$ experiments of increasing complexity to validate our proposed method and demonstrate its potential applications. Due to space constraints, two of them have been included in Appx.~\ref{app:ex}. It is important to emphasize that each experiment reflects a distinct scenario, varying both in dynamics (random, deterministic; with or without mean-field interactions) and in the cost function (with or without mean-field dependence). This provides a comprehensive overview of the method's versatility and potential applications. 
We solve all $6$ environments with both algorithms (the details are in Appx.~\ref{app:ex}).

\textbf{Problem Dimensions: } For the problem dimension, we count it as the sum of the dimensions of the information input to the neural network. Since the state is in $\mathcal{X}$, which is finite, we encode it as a one-hot vector in $\mathbb{R}^{|\mathcal{X}|}$ before passing it to the neural network to ensure differentiability. For the mean-field distribution with stopped and non-stopped parts, it is an element of the $(2 |\mathcal{X}| - 1)$-simplex, and is represented as a non-negative vector in $\mathbb{R}^{2|\mathcal{X}|}$. Therefore, MFOS tasks are of spatial dimension $|\mathcal{X}| + 2 |\mathcal{X}| = 3|\mathcal{X}|$, $|\mathcal{X}|$ being the cardinality of an individual agent's state space.

\textbf{Comparison of the Two Proposed Algorithms: }
    While in theory both algorithms are equally capable of tackling MFOS problems, in practice, these algorithms have advantages in different settings. Empirically, we found that the optimal stopping decision is learned faster by DA than by DP when the amount of compute is not a restriction. However, DA requires differentiating through the whole trajectory at each gradient step, which requires a large amount of memory when the dimension is high. The minimum required memory for training with DA increases with $T$, whereas DP requires only constant order memory that is independent of the time horizon, since it trains independently per time step. Therefore, when targeting MFOS problems with a long time horizon $T$, DPP becomes more efficient, at least memory-wise. \blue{For similar observations in the context of continuous time optimal control, see \cite{germain2022numericalresolutionmckeanvlasovfbsdes}.}
    
\blue{\textbf{Example 1} (Towards the uniform) and \textbf{Example 2} (Rolling a die), on a 1D gridworld state space, are described in details in Appx.~\ref{app:ex1-details} and~\ref{app:ex2-details} respectively due to space constraint.}

\textbf{Example 3: Crowd Motion with Congestion. } This example extends the setting of Example~2 by incorporating a congestion term into the dynamics. The outcome of the die takes the role of the noise $\epsilon \sim \mathcal{U}(\mathcal{X})$ where  $\mathcal{X} = \{1,2,3,4,5,6\}$. The system starts in the initial distribution
$
\eta = \frac{1}{4}\delta_1 + \frac{1}{4}\delta_2 + \frac{1}{2}\delta_5,
$
and evolves according to the dynamics \eqref{eq: MF extended dyn} with 
$\mu_0 = \eta,$ and  $F(n,x, \mu, \epsilon) = \epsilon$ where we are going to introduce a term of \textit{congestion} multiplying the probability of moving by $(1 - C_{\mathrm{cong}} \mu(x))$ to model the fact that it is difficult to move from a state $x$ if the distribution is concentrated in that state \blue{(see Appx. ~\ref{app:ex3-details})}.
The social cost function associated to this scenario is $\Phi(x,\mu) = x$. 
Time horizon is set to $T = 4 $. 
We perform the experiment without congestion (see Appx.~\ref{app:ex2-details}), and we expect congestion to slow down the movement. 
DA results are shown in Fig.~\ref{fig:DR_CongestionDice}. 

This example demonstrates that two classes of stopping times (synchronous and asynchronous) can lead to very different optimal stopping decisions and induce distributions. 
Although the true value is unknown, the results indicate that synchronous stopping times yield a higher value, while asynchronous stopping times lead to a significant reduction in the cost. Additionally, in the asynchronous case, congestion leads to reduced movement, as observed between time 0 and time 1 in state 4.
See Appx.~\ref{app:ex3-details} for the DPP results.

\begin{figure}[!ht]%
    \centering
    \begin{subfigure}{\linewidth}
    \includegraphics[width=1.0\linewidth]{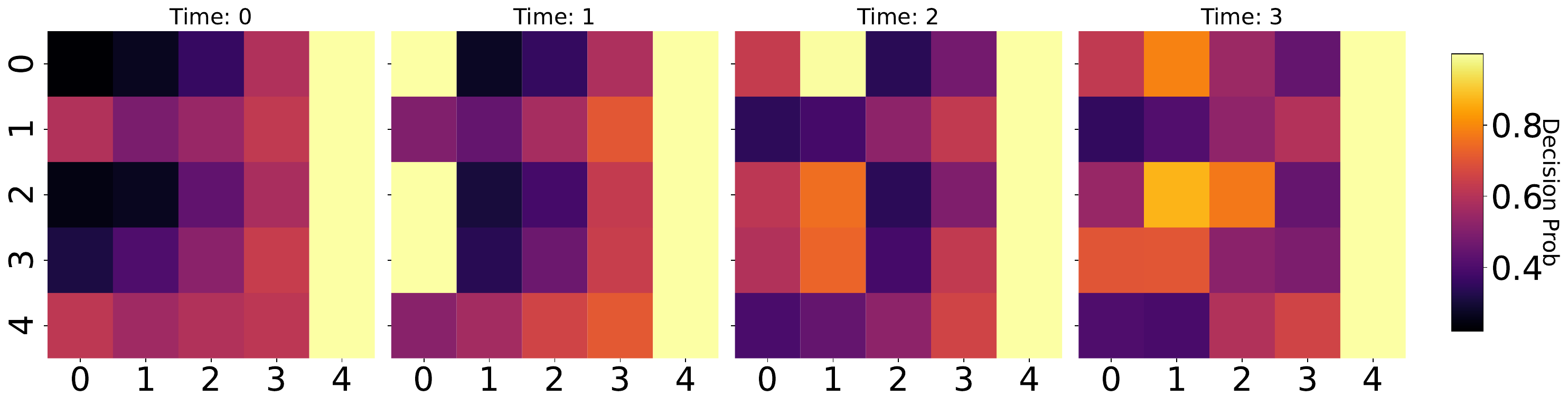}
    \end{subfigure}
    \begin{subfigure}{\linewidth}
    \includegraphics[width=1.0\linewidth]{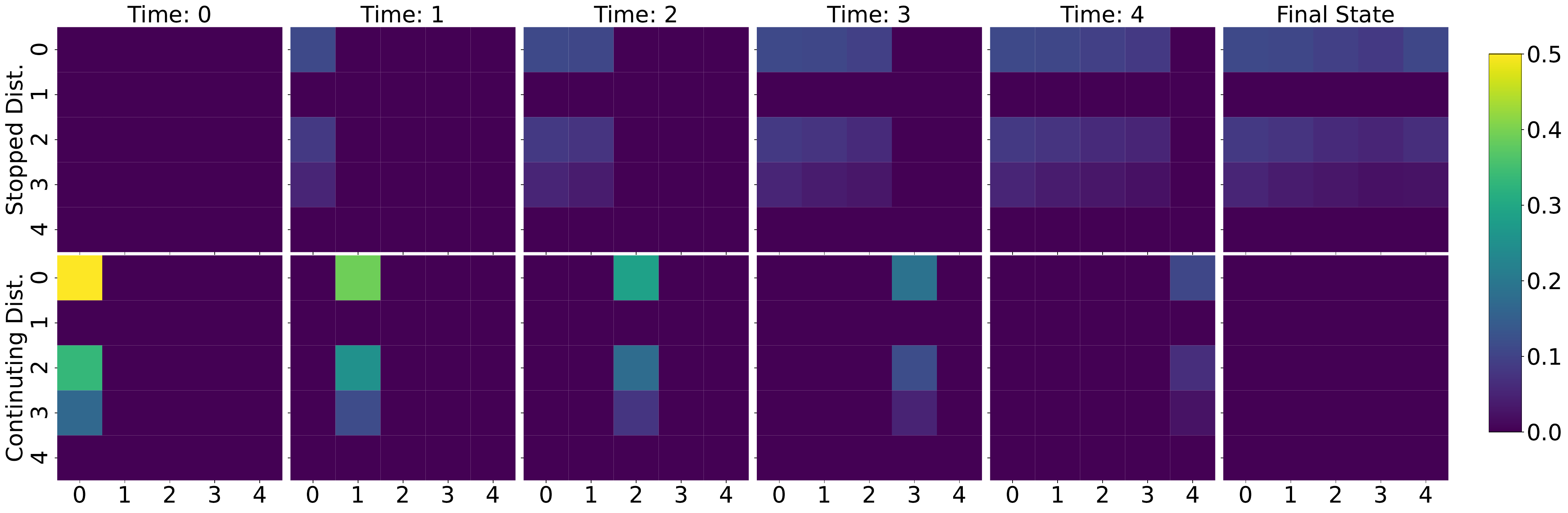} 
    \end{subfigure}
    \caption{
    Example 5. DA results, asynchronous stopping. Left: stopping decision probability. Right: evolution of the distribution after training.
    }
    \label{fig:DR_TowardsUnif2D}
    \vspace{-1em}
\end{figure}

\begin{figure*}[!h]
    \centering
    \begin{subfigure}[t]{1.9\columnwidth}
        \includegraphics[width=1\columnwidth]{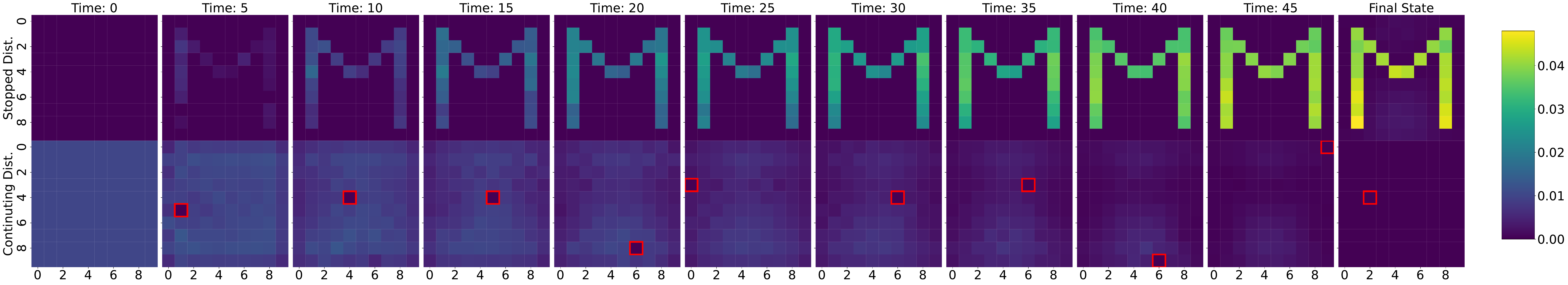}
    \end{subfigure}
    \begin{subfigure}[t]{1.9\columnwidth}
        \includegraphics[width=1\columnwidth]{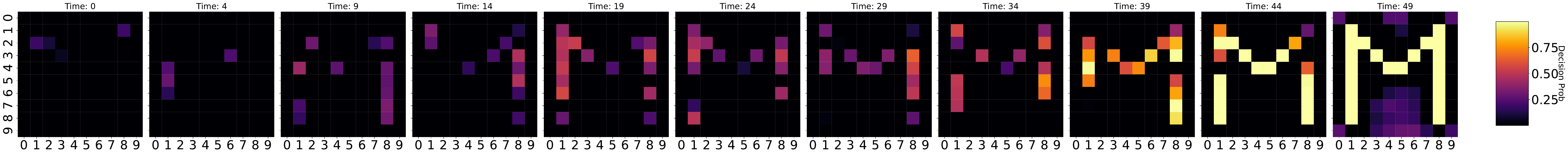}
    \end{subfigure}
    
    \caption{Example 6. DPP results, asynchronous stopping. Match the Letter ``M'' in $10\times10$ grid with common noise. %
    We plot the stopped distribution, continuing distribution, and decision probability function every $5$ timestep. The marked red square indicates the random obstacles (common noise). } \label{fig:DPP_toward_M}
\end{figure*}

\begin{figure*}[!h]
    \centering
    \begin{subfigure}[t]{0.8\columnwidth}
        \includegraphics[width=\columnwidth]{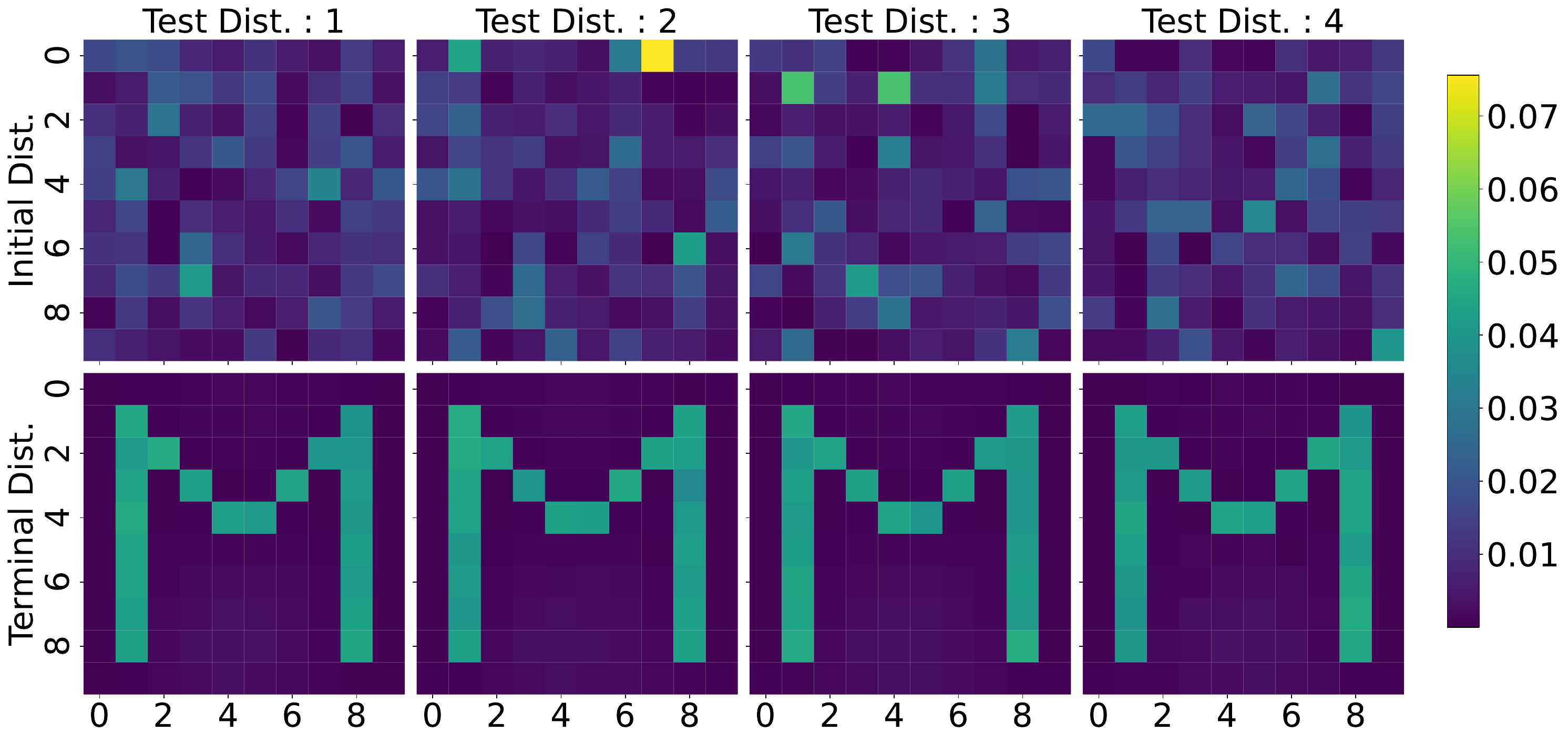}
    \end{subfigure}
        \begin{subfigure}[t]{0.8\columnwidth}
        \includegraphics[width=\columnwidth]{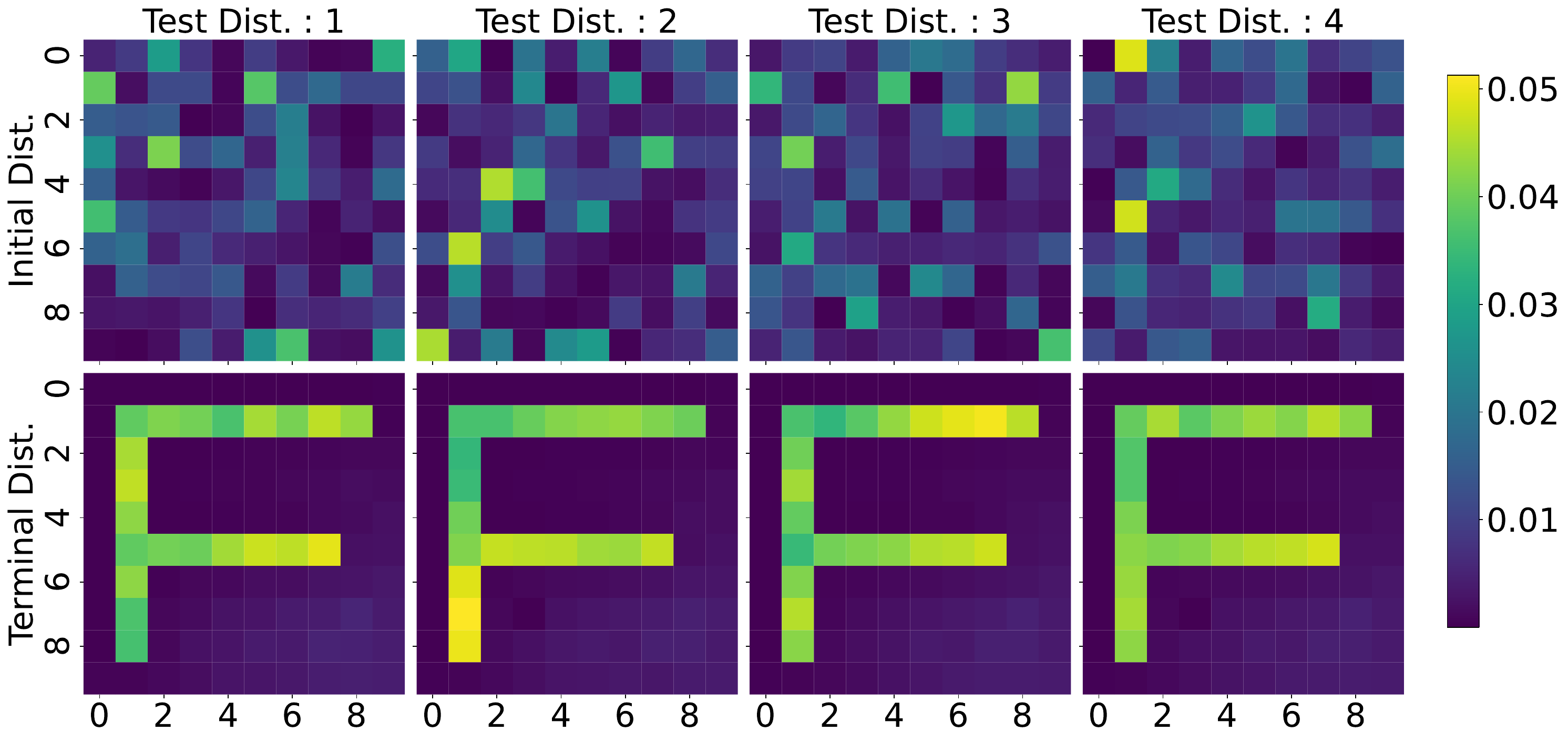}
    \end{subfigure}
    \centering
    \begin{subfigure}[t]{0.8\columnwidth}
        \includegraphics[width=\columnwidth]{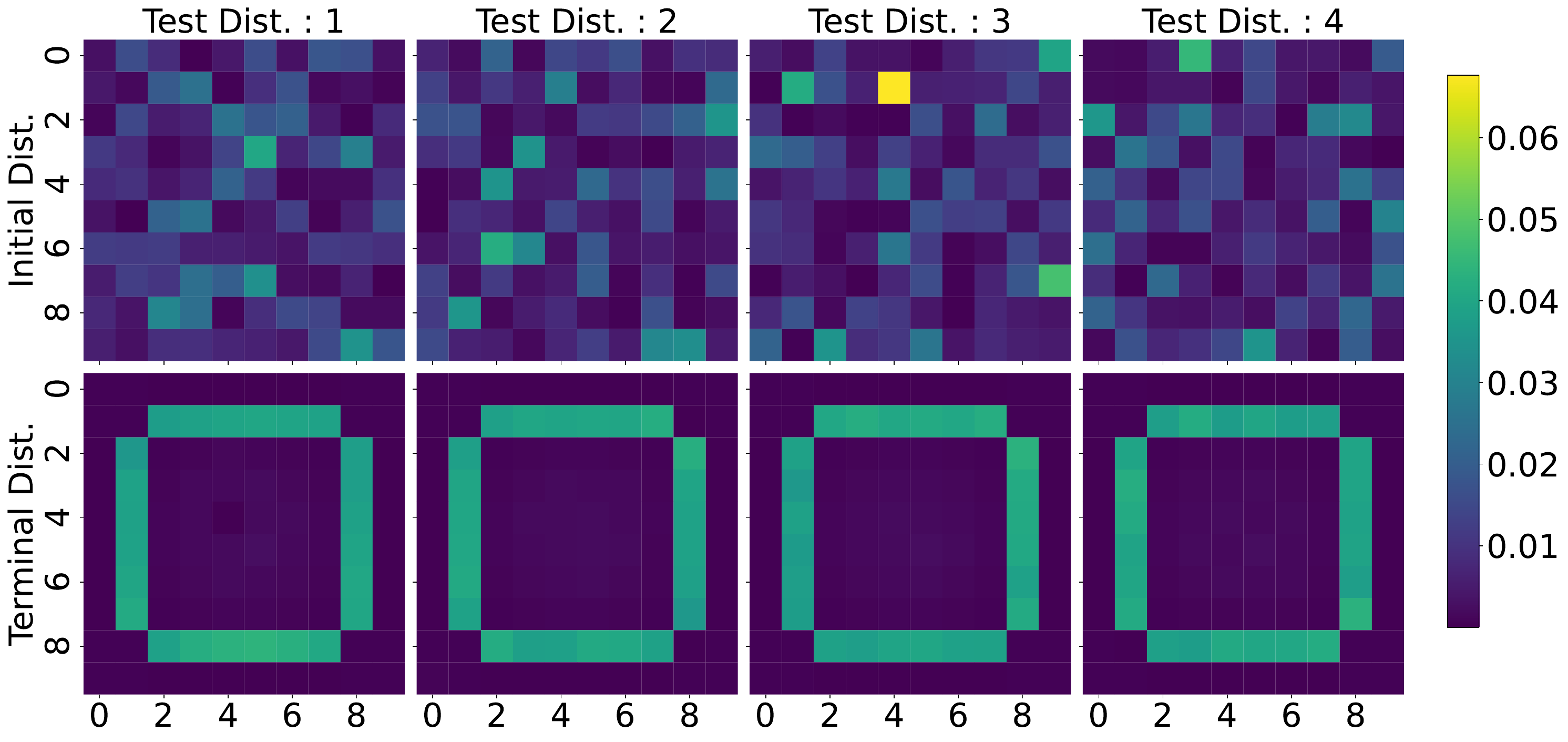}
    \end{subfigure}
        \begin{subfigure}[t]{0.8\columnwidth}
        \includegraphics[width=\columnwidth]{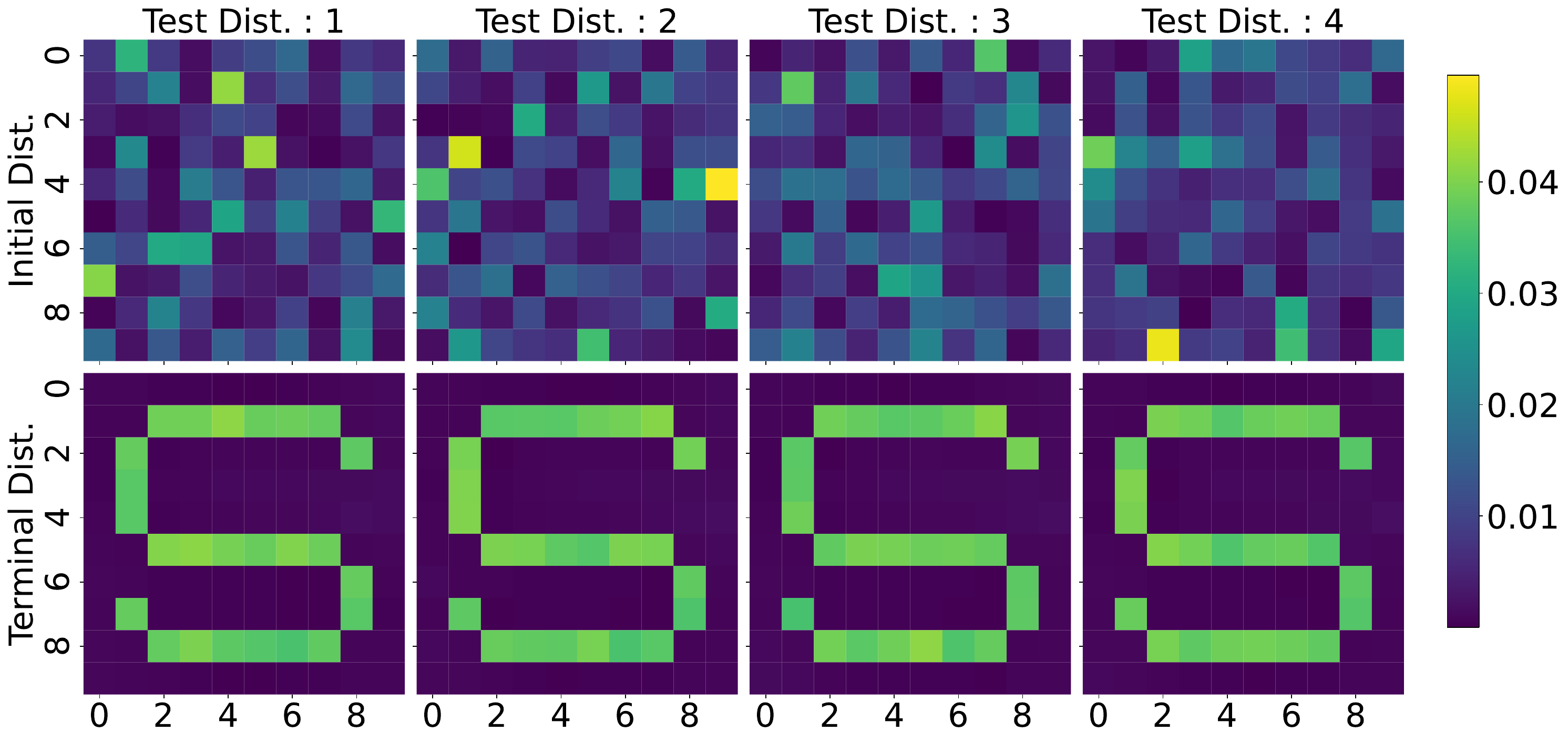}
    \end{subfigure}
    \caption{Example 6. DPP results, asynchronous stopping. Match the Letter ``M'', ``F'', ``O'', and ``S''. Tested with the randomly sampled initial distribution.} \label{fig: DPP_toward_MFOS_multiple}
    \vspace{-1.5em}
\end{figure*}

\textbf{Example 4: Distributional Cost. }\label{ex: distributional}
\blue{This example extends, at the mean field level, the motivating example described at the end of Section~\ref{ex: motivating}. Based on Theorem ~\ref{thm:epsilon-opt},  the mean-field solution provides a good approximation of the $N$-agent problem.}
DA results are shown in Fig.~\ref{fig:DR_DistCost}. The results for other settings are shown in Appx.~\ref{app:ex4-details}.

\textbf{Example 5: Towards Uniform in Dimension 2. }
This example extends Example 1 (see Appx.~\ref{app:ex1-details}) to two dimensions, demonstrating how the algorithm performs in higher-dimensional settings.
We take state space $\mathcal{X} = \{0,1,2,3,4\} \times \{0,1,2,3,4\}$, time horizon $T=4$, transition function $F(n,x,\mu,\epsilon) = x+(1,0)$ which means that the agent deterministically moves to the state on the right on the same row, with boundary at $x=4$, and cost function $\Phi(x,\mu) = \mu(x)$ which depends on the mean field only through the state of the agent (this is sometimes called local dependence). 
For the testing distribution, we take a distribution concentrated on state $x=0$, denoted as $\mu_0 = \delta_0$. 
Fig. \ref{fig:DR_TowardsUnif2D} shows that the distribution evolves towards a uniform distribution across each row, as expected, and also illustrates the optimal strategy (decision probability) required to achieve this outcome. Results for the DPP algorithm and the synchronous stopping are in Appx.~\ref{app:ex5-details}. 

\textbf{Example 6: Matching a Target with a Fleet of Drones. } %
We conclude with a more realistic and complex example to showcase the potential applications of our algorithms. This example aims to align a fleet of drones with a given target distribution at terminal time $T$, starting from a random initial distribution.  
To make this experiment more interesting, we expand the framework described so far by considering a different type of cost and by including a noisy obstacle hindering the drones' movements (see Appx.~\ref{app:ex6-details} for the mathematical formulation). We take $\mathcal{X} = \{0, \dots, 9\} \times \{0, \dots, 9\}$ that represents a $10 \times 10$ grid. Hence, the neural network's input is of dimension $3|\mathcal{X}| = 300$. The system follows the dynamics that diffuse uniformly over the possible neighbors, where the possible neighbors of $x \in \mathcal{X}$ are defined as $x \pm (0,1)$ or $x \pm (1,0)$ if the resulting state is still an element of $\mathcal{X}$. Moreover, we introduce extra stochasticity into the dynamics by placing an obstacle at a random state on the grid at each time step. The location is uniformly selected from $\mathcal{X}$ and is viewed as a \emph{common noise} affecting the dynamics of all the agents. This introduces additional complexity in the learning problem because even for a fixed stopping decision rule, the evolution of the population is stochastic. We consider the target distribution $\rho$ to be the uniform distribution over the grid of the letter ``M'', ``F'', ``O'', and ``S'' respectively, and we set the terminal cost $g_{\rho}(\nu) = \sum_{x \in \mathcal{X}} |\nu(x) - \rho(x)|^2$ (see results in Fig. \ref{fig:DPP_toward_M}). We choose the time horizon $T = 50$. 
Another important aspect of the algorithms' outcome is that the learned stopping decisions are \textit{agnostic to the initial distribution} in the sense that the same stopping decision rule can be used on different initial distributions and always leads to matching the target distribution. Fig.~\ref{fig: DPP_toward_MFOS_multiple} shows the terminal distributions under random initial testing distributions: the learned stopping probability function is robust to any test distribution used at inference time. Results for the DA algorithm are shown in Appx.~\ref{app:ex6-details}.

\section{Conclusion}
We proposed a discrete-time, finite-state MAOS problem with randomized stopping times and its mean-field version. We proved that the latter is a good approximation of the former, and we established a DPP for MFOS. These new problems cannot be directly tackled by adapting previous methods for single-agent OS problems. To overcome these challenges, we proposed two deep learning methods and evaluated their performance over six different scenarios. When an analytical solution exists, our method recovers it in just a few iterations. In complex settings, it efficiently achieves high performance. This work lays the foundation for understanding and learning optimal stopping problems in complex interacting systems.

\textbf{Limitations and Future Works:}
We did not establish a convergence proof for our algorithms due to the complexity of analyzing deep networks. Additionally, we left for future work a detailed analysis of different classes of stopping times. Finally, extending our framework to continuous spaces and validating it on real-world, ambitious applications remains an exciting direction for future research.

\clearpage
\section*{Impact Statement}

This paper presents work whose goal is to advance the field of 
Machine Learning. There are many potential societal consequences 
of our work, none which we feel must be specifically highlighted here.

\section*{Acknowledgements}
The authors are grateful to the anonymous reviewers and area chairs for their comments, which helped improve the quality of the paper. M.~L. and L.~M. were partially supported by the Shanghai Frontiers Science Center of Artificial Intelligence and Deep Learning at NYU Shanghai. M.~L. and Y.~Z. would like to thank Lexie Zhu for fruitful discussions at the early stage of this project. 

\bibliography{mfg-num-bib2024}
\bibliographystyle{icml2025}

\newpage
\appendix
\onecolumn
\section{$N$-agent cooperative optimal stopping}

\subsection{Why do we need randomization in the control? An Example}\label{sec:why rand}

We want to show with an example that the extension to randomized stopping times is necessary in the mean-field formulation, because when we try to plug an optimal strategy into the $N$-agent problem, we notice that the latter is no longer optimal.
\begin{example}[Randomized is better]
Let consider the following scenario: we take the state space $\mathcal{X} = \{T,C\}$ and initial distribution $\mu_0 = 3/4\delta_T + 1/4\delta_C$; transition function $F(T,x,\mu,\epsilon)=C$, $F(C,x,\mu,\epsilon)=T$, meaning that the system at any time step, can stop or switch the state. We take as social cost: 
\begin{equation}
\Phi(x,\mu) = 
   \begin{cases}
       1 &\hbox{ if } \mu(x)\leq1/2\\
       5 &\hbox{ if } \mu(x)> 1/2.
   \end{cases}
\end{equation}
\end{example}
Notice that without allowing the randomized stopping the value is $V^* = 3/4\cdot 5 + 1/4\cdot 1 = 4 $, which corresponds to stop all the distribution ( in every state)  at time $n=0$.
In the end, this formulation cannot reflect the optimum in the association of $N$ agents. 
Indeed when we plug this policy into the $N$ agent formulation we obtained the value $V^N = 1/N ( 3N/4 \cdot 5 + N/4\cdot 1 ) = 4$, which is not optimal since we can use the strategy ( which is going to be optimal for the $N$-agent problem) to stop, at time $0$, only the $1/3$ of players in state $T$, allowing the others to change state. This leads to a final configuration of $m_1 = 1/2\delta_T + 1/2\delta_C$ and a value of $V^{*,N}= 1/N(N/4\cdot 5 + 3N/4\cdot 1)=2 <V^N=4$. 

In particular, we want to emphasize the fact that, without allowing a randomized stopping time in the MF formulation, we find an optimal state-dependent strategy, which corresponds , in the problem with finite agents, to the fact that every player in the same state will have the same stopping time.

\subsection{Proof of Theorem~\ref{thm:epsilon-opt}}
\label{app:proof-thm-epsilon-opt}
This section demonstrates that solving the optimal control problem at the asymptotic regime for the number of agents tending to infinity allows one to find the solution to the multi-agent problem by including the solution found at the regime in the latter.
This is of fundamental importance in applications as it allows a simpler and clearer situation to be analyzed for the purpose of solving a complicated problem.
Let us recall the $N$-agent formulation. We are going to work in the framework where the central planner use the same policy $p$ to control each agent. We suppose Assumption~\ref{assp:lip} holds.

Let us fix the following notation  $\nu_m^{N,p}:=\frac{1}{N}\sum_{i=1}^N\delta_{Y_m^{i,\bsalpha}}$ and $\nu_m^p :=\cL(Y_m^\alpha)$ . 

\begin{equation}\label{eq: N-agent extend dyn}
    \left\{
\begin{split}
   &X_0^{i,\bsalpha} \sim \mu_0,\qquad A_0^{i,\bsalpha} = 1  \\
   &\alpha^i_n \sim \pi^i_n(\cdot |X_n^{i,\bsalpha})=Be(p_n( X_n^{i,\bsalpha}))\\
   &A_{n+1}^{i,\bsalpha} = A_n^{i,\bsalpha} \cdot (1-\alpha^i_n)\\
   &X_{n+1}^{i,{\bsalpha}} = 
   \begin{cases}
       F(n, X_n^{i,{\bsalpha}}, \frac{1}{N}\sum_{j=0}^N\delta_{Y_n^{j,\bsalpha}}, \epsilon^i_{n+1}), &\hbox{ if } A_n^{i,\bsalpha} \cdot (1-\alpha^i_n) = 1 \\
       X_n^{i,\bsalpha}, &\hbox{ otherwise. }
   \end{cases}
\end{split}
\right.
\end{equation}
The social cost is defined as:
\begin{equation}\label{eq: appendix N agent cost }
    \begin{split}
        &J^N(p):=J^N(p,\dots, p)
        :=\frac{1}{N}\sum_{i=1}^N\EE\left[\sum_{m=0}^T\Phi(X_m^{i,\bsalpha},\frac{1}{N}\sum_{i=0}^N\delta_{X_m^{i,\bsalpha}})A_m^{i,\bsalpha}\alpha_m^i\right]
        \\
        &\qquad =\EE\left[\sum_{m=0}^T\frac{1}{N}\sum_{i=1}^N\Phi(X_m^{i,\bsalpha},\frac{1}{N}\sum_{i=0}^N\delta_{X_m^{i,\bsalpha}})A_m^{i,\bsalpha}\alpha_m^i \right] \\
        &\qquad =\EE\left[\sum_{m=0}^T\sum_{(x,a) \in \cS}\nu^{N,p}_m(x,a) \Phi(x,\nu^{N,p}_{X,m})a p_m(x)\right] \\ 
        &\qquad =\EE\left[\sum_{m=0}^T\Psi(\nu_m^{N,p},p_m(\nu_m^{N,p}))\right]\\  
    \end{split}
\end{equation}

The asymptotic problem is written as:
\begin{equation}\label{eq: appendix MF dyn}
\left\{
\begin{split}
   &X_0^{\alpha} \sim \mu_0,\qquad A_0^\alpha = 1  \\
   &\alpha_n \sim \pi_n(\cdot |X_n^{\alpha})=Be(p_n( X_n^{\alpha}))\\
   &A_{n+1}^\alpha = A_n^\alpha \cdot (1-\alpha_n)\\
   &X_{n+1}^{{\alpha}} = 
   \begin{cases}
       F(n, X_n^{{\alpha}}, \mathcal{L}(X_n^\alpha), \epsilon_{n+1}), &\hbox{ if } A_n^\alpha \cdot (1-\alpha_n) = 1 \\
       X_n^{\alpha}, &\hbox{ otherwise, }
   \end{cases}
\end{split}
\right.
\end{equation}

where the social cost is defined as:
\begin{equation}\label{eq: appendix MF cost}
    \begin{split}
        J(p)&:=\sum_{m=n}^T \sum_{(x,a)\in\mathcal{S}}\nu^{p,\nu,n}_m(x,a)\Phi(x, \nu^{p}_{X,m})ap_m(x)\\
        &=\sum_{m=n}^T \Psi(\nu_m^{p},p_m(\nu_m^p)).
    \end{split}
\end{equation}
Let us recall that $\cP:=\{p:\{0,\dots,T\} \times \cX \times \cP(\cS) \to [0,1]: p \text{ is } L_p \text{-Lipschitz}\}$, the set of all possible admissible policies $p$.
\blue{From now we are going to use the notation $\| \cdot \|$ for the norm associated to the total variation distance}.
Firstly we want to prove the at time time $n$ the distributions $\nu_m^{N,p}$ and $\nu_m^p$ are close in the following sense (see \cite{cui2023learning} for a similar setting). 
\begin{lemma}[Convergence of the measure]\label{lem:conv of meas}
Suppose Assumption~\ref{assp:lip} holds.
Given the dynamics \eqref{eq: N-agent extend dyn} and \eqref{eq: appendix MF dyn} for every $n=0, \dots, T$ it holds: 
\begin{equation}
\label{eq:conv-meas-n}
    \sup_{p \in \cP}\EE\left[\|\nu_n^{N,p} - \nu_n^{p}\|\right] = \mathcal{O}(1/\sqrt{N}).
\end{equation}
\end{lemma}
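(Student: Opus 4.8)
The plan is to prove the bound by induction on $n$. The base case $n=0$ is immediate: both $\nu_0^{N,p}$ and $\nu_0^p$ are built from i.i.d.\ samples from $\mu_0$ (with the alive coordinate set to $1$), so $\nu_0^{N,p}$ is the empirical measure of $N$ i.i.d.\ draws from $\nu_0^p$. A standard concentration estimate for empirical measures on a finite space $\cS$ then gives $\EE[\|\nu_0^{N,p}-\nu_0^p\|]=\cO(1/\sqrt N)$, uniformly in $p$ since the initial law does not depend on $p$. For the inductive step, I would assume \eqref{eq:conv-meas-n} holds at time $n$ and propagate it to time $n+1$. The natural device is to insert an intermediate measure and use the triangle inequality:
\begin{equation}
\EE\big[\|\nu_{n+1}^{N,p}-\nu_{n+1}^{p}\|\big]
\le \EE\big[\|\nu_{n+1}^{N,p}-\bar F(\nu_n^{N,p},p_n)\|\big]
+ \EE\big[\|\bar F(\nu_n^{N,p},p_n)-\bar F(\nu_n^{p},p_n)\|\big].
\end{equation}

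For the second term I would invoke the Lipschitz continuity of $\bar F$ from Assumption~\ref{assp:lip} (together with the $L_p$-Lipschitz regularity of $p_n$, so that the map $\nu\mapsto \bar F(\nu,p_n(\nu))$ is Lipschitz), giving a bound of the form $L\,\EE[\|\nu_n^{N,p}-\nu_n^{p}\|]$, which is $\cO(1/\sqrt N)$ by the induction hypothesis. The first term is the genuinely probabilistic part: $\bar F(\nu_n^{N,p},p_n)$ is the \emph{conditional} law one step ahead given the current empirical configuration, whereas $\nu_{n+1}^{N,p}$ is the \emph{realized} empirical measure obtained by letting each of the $N$ agents independently draw its stopping decision $\alpha_n^i\sim Be(p_n(X_n^{i,\bsalpha}))$ and its noise $\epsilon_{n+1}^i$. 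Conditionally on the state $\bfY_n^{\bsalpha}$, the next extended states are independent with mean exactly $\bar F(\nu_n^{N,p},p_n)$, so $\nu_{n+1}^{N,p}-\bar F(\nu_n^{N,p},p_n)$ is an average of $N$ conditionally independent, mean-zero, bounded terms. A conditional variance computation (or McDiarmid/Azuma on each coordinate of the finite space $\cS$) yields $\EE[\|\nu_{n+1}^{N,p}-\bar F(\nu_n^{N,p},p_n)\|\mid \bfY_n^{\bsalpha}]=\cO(1/\sqrt N)$ with a constant depending only on $|\cS|$; taking expectations removes the conditioning.

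Combining the two terms and absorbing constants, the induction closes: if the rate at time $n$ is $C_n/\sqrt N$, then the rate at time $n+1$ is $(L\,C_n + c_{|\cS|})/\sqrt N$, still $\cO(1/\sqrt N)$ since $T$ is finite and the recursion is iterated only $T$ times. The uniformity over $p\in\cP$ is preserved throughout because both the Lipschitz constant $L$ (from $\bar F$ and $L_p$) and the concentration constant $c_{|\cS|}$ are independent of the particular admissible policy; one only needs the supremum over $p$ to commute past a finite induction, which it does since each step's constant is policy-independent.

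The main obstacle I anticipate is the first (fluctuation) term, specifically making the conditional concentration estimate clean and genuinely uniform in $p$. The subtlety is that the one-step conditional law $\bar F(\nu_n^{N,p},p_n)$ itself depends on the random empirical measure $\nu_n^{N,p}$, so one must carefully condition on $\bfY_n^{\bsalpha}$ before applying a concentration inequality to the freshly drawn decisions and noises; one also has to check that the stopped agents (whose state is frozen) do not break the conditional-independence structure, which they do not because freezing is a deterministic function of the alive coordinate already encoded in $Y_n$. A secondary technical point is verifying that $\nu\mapsto\bar F(\nu,p_n(\nu))$ is Lipschitz in total variation when $p_n$ is $L_p$-Lipschitz and the transition kernel $\cQ^\nu$ may itself depend on $\nu$; this requires the assumed Lipschitz regularity of $\bar F$ to be interpreted jointly in $(\nu,h)$ and composed with the policy, but it is routine once the norms are fixed.
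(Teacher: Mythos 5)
Your proposal follows essentially the same route as the paper's proof: induction over time steps, the same triangle-inequality decomposition through the intermediate measure $\bar F(\nu_n^{N,p},p_n)$, a conditional-independence/variance argument for the fluctuation term (constant depending on $|\cS|$), and the Lipschitz property of $\bar F$ composed with the $L_p$-Lipschitz policy for the propagation term, closed by a finite geometric recursion uniform in $p$. The argument is correct and matches the paper's in all essentials, including the technical points you flag about conditioning on $\boldsymbol{Y}_n^{\bsalpha}$ and the policy-independence of the constants.
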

\begin{proof}
We are going to follow an induction argument over the time steps:

\textit{Initialization:} for $n=0$, since we have indipendent samples at the starting point, by the law of large numbers (LLN) we have: 

$$
 \sup_{p \in \cP}\EE\left[\|\nu_0^{N,p} - \nu_0^{p}\|\right]\to 0
$$

with rate of convergence $\mathcal{O}\bigg(\frac{1}{\sqrt{N}}\bigg)$.

In particular, let us denote $\cS:=\{y_1, \cdots, y_K\}$, $\nu_0^p(y_i) = p_i$, $\nu_0^{N,p}(y_i)=\frac{1}{N}\sum_{i=1}^{N}\delta_{Y^\bsalpha_i}(y_i)=\frac{C(y_i)}{N}$, where $C(y_i)$ is defined as the number of agent that are in the state $y_i$ at time $0$. 
We can write: 
$$
\EE\left[ \| \nu_0^{N,p} - \nu_0^p\|\right]=\frac{1}{2}\EE\left[\sum_{i=1}^{|\cS|}\left| \frac{C(y_i)}{N} - p_i\right| \right]\leq \frac{\sqrt{|S|}}{2}\EE\left[\sum_{i=1}^{|\cS|}\left(\frac{C(y_i)}{N} - p_i \right)^2 \right]^{1/2}
$$
by Cauchy-Schwarz inequality.
Notice now that $C(y_i)\sim \operatorname{Bin}(N,p_i)$ and so 
$$
\sum_{i=1}^{|S|} \operatorname{Var}\left(\frac{C(y_i)}{N}\right) =\sum_{i=1}^{|S|} \frac{p_i(1-p_i)}{N}= \frac{1-\sum_{i=1}^{|S|}p_i^2}{N}\leq\frac{|S|-1}{N|S|}
$$
since the quantity $1-\sum_{i=1}^{|S|}p_i^2$ has its max when $p_i = \frac{1}{|S|}$.

Eventually, we obtain the explicit constant:
$$
\EE\left[||\nu_0^{N,p} - \nu_0^p||\right]\leq\frac{\sqrt{|S|-1}}{2\sqrt{N}}.
$$

\blue{
\begin{remark}
    Notice that the bound depends on the cardinality of the state space: more states lead to a larger upper bound, meaning possibly a larger discrepancy between the empirical and mean-field distributions. This is due to the fact that we used the total variation distance as metric, which sums over all possible states. In continuous space, this metric is not feasible and so usually the Wasserstein distance is used for convergence analysis (see \cite{MR3752669}). Actually, in the finite space and discrete time setting, we have the following inequality:
    $$
d_{\mathrm{min}}\|\mu - \nu\|_{\mathrm{TV}}\leq W_1(\mu, \nu)\leq D\|\mu - \nu\|_{\mathrm{TV}},
$$
where $d_{\mathrm{min}}:=\min_{x\neq y}d(x, y)$ and $D:=\max_{x \neq y}d(x, y)$. Notice that the Wasserstein distance in finite space and discrete time is defined as:
$$
W_p(\mu, \nu) = \left( \min_{T \in \mathcal{C}(\mu, \nu)} \sum_{i=1}^{n} \sum_{j=1}^{n} d(x_i, x_j)^p \cdot T_{i,j} \right)^{\frac{1}{p}},
$$ 
where $\mathcal{C}(\mu, \nu)$ is the set of couplings defined as:
\begin{align*}
& \mathcal{C}(\mu, \nu) = \left\{ T \in \mathbb{R}^{n \times n} \ \bigg| \ 
  \sum_{j=1}^{n} T_{i,j} = \mu_i,
  \sum_{i=1}^{n} T_{i,j} = \nu_j, 
  ~ T_{i,j} \geq 0, ~ \forall i,j 
\right\},
\end{align*}
and $d(x_i, x_j)$  is the distance between points $x_i$ and $x_j$ in the metric space.
More details on Wasserstein distances are described in \cite{villani2009wasserstein} and \cite{arjovsky2017wasserstein}.
\end{remark}
}

\textit{Induction step:} assume now that \eqref{eq:conv-meas-n} holds at time $n$. Using triangle inequality, at time $n+1$ we have, for any $p \in \cP$,
\begin{align*}
\EE\left[\|\nu_{n+1}^{N,p} - \nu_{n+1}^{p}\|\right] \leq\EE\left[\|\nu_{n+1}^{N,p} - \bar{F}(\nu_{n}^{N,p},p_n(\nu_n^{N,p}))\|\right] + \EE\left[\|\bar{F}(\nu_n^{N,p},p_n(\nu_n^{N,p})) - \nu_{n+1}^{p}\|\right]
\end{align*}
where we recall the expression of $\bar{F}$ described by \eqref{def: F bar}. 

For the first term, we have: 
\begin{align*}
    & \EE\left[\left\|\nu_{n+1}^{N,p} - \bar{F}(\nu_{n}^{N,p},p_n(\nu_n^{N,p}))\right\|\right] \\
    & \qquad = \EE\left[\left\|\frac{1}{N}\sum_{i=1}^N\delta_{Y_{n+1}^{i,\bsalpha}} - \bar{F}(\nu_{n}^{N,p},p_n(\nu_n^{N,p}))\right\|\right]\\
    & \qquad = \frac{1}{2}\EE\left[\left|\sum_{y\in \cS}\frac{1}{N}\sum_{i=1}^N\delta_{Y_{n+1}^{i,\bsalpha}}(y) - \bar{F}(\nu_{n}^{N,p},p_n(\nu_n^{N,p}))(y)\right|\right] \\
    &\qquad = \frac{1}{2}\sum_{y\in \cS}\EE\left[\left|\frac{1}{N}\sum_{i=1}^N\delta_{Y_{n+1}^{i,\bsalpha}}(y) - \bar{F}(\nu_{n}^{N,p},p_n(\nu_n^{N,p}))(y)\right|\right] \\
    &\qquad = \frac{1}{2}\sum_{y\in \cS}\EE\left[\EE\left[\left|\frac{1}{N}\sum_{i=1}^N\delta_{Y_{n+1}^{i,\bsalpha}}(y) - \bar{F}(\nu_{n}^{N,p},p_n(\nu_n^{N,p}))(y)\right|\bigg|\boldsymbol{Y}_n^\bsalpha\right]\right]
\end{align*}
The interpretation of $\bar{F}$ gives us:
\begin{align*}
    \bar{F}(\nu_{n}^{N,p},p_n(\nu_n^{N,p}))(y)
    & = \sum_{y'} \nu_{n}^{N,p}(y') \PP(Y_{n+1}^p = y | Y_{n}^p = y')
    \\
    & = \frac{1}{N} \sum_{i=1}^N  \PP(Y_{n+1}^{p} = y | Y_{n}^{p} = Y_{n}^{i,p})
    \\
    & = \frac{1}{N} \sum_{i=1}^N  \PP(Y_{n+1}^{i,p} = y | Y_{n}^{i,p})
    \\
    & = \frac{1}{N} \sum_{i=1}^N  \EE[ \delta_{Y_{n+1}}^{i,p}(y) | Y_{n}^{i,p}]
\end{align*}
where we used that the $i$ particles are indistinguishable and have the same transition functions.

So we can conclude the argument as: 
\begin{align*}
\EE\left[\left\|\nu_{n+1}^{N,p} - \bar{F}(\nu_{n}^{N,p},p_n(\nu_n^{N,p}))\right\|\right] = \frac{1}{2}\sum_{y\in \cS}\EE\left[\EE\left[\left|\frac{1}{N}\sum_{i=1}^N\delta_{Y_{n+1}^{i,\bsalpha}}(y) - \EE\left[\frac{1}{N}\sum_{i=1}^N\delta_{Y_{n+1}^{i,\bsalpha}}(y)\bigg|\boldsymbol{Y}_n^\bsalpha\right]\right|\bigg|\boldsymbol{Y}_n^\bsalpha\right]\right]\leq \frac{|S|}{4\sqrt{N}}
\end{align*}
by the LLN, where again the bound is independent of $p \in \cP$. 

Indeed, given the past history $\boldsymbol{Y}_n^\bsalpha$ the random variables $\delta_{Y_{n+1}^{i,\bsalpha}}$ become conditionally independent for every $i=1, \dots, N$. Furthermore each $\delta_{Y_{n+1}^{i,\bsalpha}(y)}$ is a Bernoulli  random variable, therefore its variance $Var(\delta_{Y_{n+1}^{i,\bsalpha}}(y)|\boldsymbol{Y}_n^\bsalpha)\leq \frac{1}{4}$. 
Summing over all agents, the variance of the empirical mean becomes $\frac{1}{4N}$. Using Cauchy-Schwarz inequality, for any random variable $Z$ with finite variance $\EE[|Z - \EE[Z]|]\leq \sqrt{Var(Z)}$, so in our case we obtained the constant $\frac{|S|}{4\sqrt{N}}$. 

For the second term, by Lipschitz property of $\bar{F}$ and $p(\nu)$, we can write :
\begin{align*}
    &\EE\left[\|\bar{F}(\nu_n^{N,p},p_n(\nu_n^{N,p})) - \nu_{n+1}^{p}\|\right]\\
    &\qquad =\EE\left[\|\bar{F}(\nu_n^{N,p},p_n(\nu_n^{N,p})) - \bar{F}(\nu_{n}^{p}, p_n(\nu_n^p))\|\right]\\
    &\qquad \leq L_{\bar{F}} \EE\left[|\|\nu_n^{N,p} - \nu_n^{p}\| + \|p_n(\nu_n^{N,p}) - p_n(\nu_n^p)\||\right]\\
    &\qquad \leq L_{\bar{F}} \EE\left[|\|\nu_n^{N,p} - \nu_n^{p}\| + L_p\|\nu_n^{N,p} - \nu_n^p\||\right]
    \\
    &\qquad =(L_{\bar{F}}(1 + L_p))\EE\left[\|\nu_n^{N,p} - \nu_n^{p} \|\right] \\
    &\qquad\leq \frac{|S|}{4\sqrt{N}}\left( \frac{1-K^{n+1}}{1-K}\right) + K^{n+1}\frac{\sqrt{|S|-1}}{2\sqrt{N}} - \frac{|S|}{4\sqrt{N}}
\end{align*}
by induction step, where $K:=L_{\bar{F}}(1 + L_p)$ and the upper bound is independent of $p \in \cP$ (since the constant $L_p$ is the same for all the control $p \in \cP$).

We have thus proved by induction that:
$$
    E\left[\|\nu_{n+1}^{N,p} - \nu_{n+1}^{p}\|\right]\leq \frac{|S|}{4\sqrt{N}}(\frac{1 - (L_{\bar{F}}(1 + L_p))^{n+1}}{1- (L_{\bar{F}}(1 + L_p))}) + (L_{\bar{F}}(1 + L_p)))^{n+1}\frac{\sqrt{|S|-1}}{2\sqrt{N}},
$$
for every time step $n=0,\dots, T$.
\end{proof}
This result allows us to prove the following main theorem on the optimal cost approximation in the $N$-agent problem. This is a precise version of the informal statement in Theorem~\ref{thm:epsilon-opt}.
\begin{theorem}[$\varepsilon$-approximation of the $N$-agent problem]
\label{thm:epsilon-opt-precise}
Suppose Assumption~\ref{assp:lip} holds. 
Given the dynamics \eqref{eq: N-agent extend dyn} and \eqref{eq: appendix MF dyn} and the social cost associated \eqref{eq: appendix N agent cost }, \eqref{eq: appendix MF cost}, let us denote by $p^*$ the optimal policy for the \textit{mean-field} problem and by $\hat{p}$ the optimal policy for the $N$-agent problem. It holds:
\begin{equation}
   J^N(p^*, \dots, p^*) - J^N(\hat{p}, \dots, \hat{p}) = \mathcal{O}(1/\sqrt{N}).
\end{equation}  
\end{theorem}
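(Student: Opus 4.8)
The plan is to bound the nonnegative quantity $J^N(p^*,\dots,p^*) - J^N(\hat p,\dots,\hat p)$ from above by $\cO(1/\sqrt N)$. Nonnegativity is immediate, since $\hat p$ minimizes $J^N$ over the admissible class $\cP$, so the whole content is in producing a matching upper bound. The central device is the decomposition (writing $J^N(p)$ for $J^N(p,\dots,p)$ and $J$ for the mean-field cost \eqref{eq: appendix MF cost}):
\begin{align*}
J^N(p^*) - J^N(\hat p)
&= \big[J^N(p^*) - J(p^*)\big] \\
&\quad + \big[J(p^*) - J(\hat p)\big] \\
&\quad + \big[J(\hat p) - J^N(\hat p)\big].
\end{align*}
The middle bracket is $\le 0$ because $p^*$ is by definition the minimizer of $J$ over $\cP$ and $\hat p \in \cP$. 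Hence the difference is controlled by the two outer brackets, each measuring the gap between the $N$-agent cost and the mean-field cost at a single fixed policy.

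The next step is to show that $\sup_{p\in\cP}\lvert J^N(p) - J(p)\rvert = \cO(1/\sqrt N)$, which bounds both outer brackets at once. Writing both costs through $\Psi$ as in \eqref{eq: appendix N agent cost } and \eqref{eq: appendix MF cost} and using that $J(p)$ is deterministic (so the mean-field terms equal their own expectations), Jensen and the triangle inequality give
\[
\lvert J^N(p) - J(p)\rvert \le \sum_{m=0}^T \EE\Big[\big\lvert\Psi(\nu_m^{N,p},p_m(\nu_m^{N,p})) - \Psi(\nu_m^{p},p_m(\nu_m^{p}))\big\rvert\Big].
\]
Applying the $L_\Psi$-Lipschitz property of $\Psi$ from Assumption~\ref{assp:lip} in both arguments, together with the $L_p$-Lipschitz property of $p$ in its measure argument, bounds each summand by $L_\Psi(1+L_p)\,\EE[\|\nu_m^{N,p} - \nu_m^p\|]$. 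Summing over $m$ and invoking Lemma~\ref{lem:conv of meas}, whose bound is uniform over $p\in\cP$, yields
\[
\sup_{p\in\cP}\lvert J^N(p) - J(p)\rvert \le L_\Psi(1+L_p)\sum_{m=0}^T \sup_{p\in\cP}\EE[\|\nu_m^{N,p}-\nu_m^p\|] = \cO(1/\sqrt N).
\]

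Combining the two steps gives $J^N(p^*)-J^N(\hat p) \le \lvert J^N(p^*)-J(p^*)\rvert + \lvert J(\hat p)-J^N(\hat p)\rvert = \cO(1/\sqrt N)$, and with nonnegativity this closes the estimate; tracking the constants from the explicit bound in Lemma~\ref{lem:conv of meas} produces the stated explicit rate. The part I would treat as the main obstacle is the \emph{uniformity in $p$}: the decomposition forces control of $J^N - J$ simultaneously at the two a priori unrelated optimizers $p^*$ and $\hat p$, and $\hat p$ is defined only implicitly as the $N$-agent minimizer and may itself vary with $N$. This is precisely why Lemma~\ref{lem:conv of meas} is stated with a supremum over the whole Lipschitz class $\cP$ rather than at a fixed policy. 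Verifying that both optimizers genuinely lie in $\cP$ — so that the middle bracket has the correct sign and the lemma applies to both — is the crux; everything else reduces to the routine Lipschitz-transfer estimate above.
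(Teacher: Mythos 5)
Your proposal is correct and follows essentially the same route as the paper's proof: the identical three-term decomposition, dropping the middle bracket by optimality of $p^*$ for $J$, and controlling the two outer brackets via the Lipschitz properties of $\Psi$ and $p$ combined with the uniform-in-$p$ bound of Lemma~\ref{lem:conv of meas}. Your remark about needing both optimizers to lie in the Lipschitz class $\cP$ is the right thing to flag, and the paper resolves it exactly as you suggest, by defining both problems over $\cP$ and stating the lemma with a supremum over that class.
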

\begin{proof}%
 We can write:
 \begin{align*}
    & J^N(p^*, \dots, p^*) - J^N(\hat{p}, \dots, \hat{p})= 
       \bigg(J^N(p^*, \dots, p^*) - J(p^*)\bigg)+ 
      \bigg(J(p^*) - J(\hat{p})\bigg) + 
      \bigg(J(\hat{p}) - J^N(\hat{p})\bigg)
 \end{align*}
Notice first that we can bound this term simply deleting the second term in the r.h.s noticing $J(p^*) - J(\hat{p}) \leq 0 $ since $p^*$ is optimal for the \textit{mean-field} cost $J(p)$. 
For the first term we can write:
\begin{align*}
    &J^N(p^*, \dots, p^*) - J(p^*) \\ 
    &\qquad =\EE\left[\sum_{m=0}^T\Psi(\nu_m^{N,p^*},p_m^*(\nu_m^{N,p^*}))\right] - \sum_{m=n}^T \Psi(\nu_m^{p^*},p_m^*(\nu_m^{p^*}))\\
    &\qquad =\sum_{n=0}^T\EE\left[\Psi(\nu_n^{N,p^*}, p_n^*(\nu_n^{N,p^*})) - \Psi(\nu_n^{p^*}, p_n^*(\nu_n^{p^*}))\right]\\
    &\qquad \leq L_{\Psi}\sum_{n=0}^T\EE\left[\left\|\nu_n^{N,p^*} - \nu_n^{p^*} \right\| + \left\|p_n^*(\nu^{N,p^*}) - p_n^*(\nu_n^{p^*})  \right\|\right]\\
    &\qquad \leq  L_{\Psi}(1 + L_p)\sum_{n=0}^T\EE\left[\left\|\nu_n^{N,p^*} - \nu_n^{p^*} \right\|\right]\\
    &\qquad \leq TL_{\Psi}(1 + L_p)  \sup_{n \in \{0,\dots, T\}}\EE\left[ \left\| \nu_n^{N,p^*} - \nu_n^{p^*}\right\|\right] \\
    &\qquad \leq  TL_{\Psi}(1 + L_p)\bigg[\frac{|S|}{4\sqrt{N}}\left(\frac{1 - (L_{\bar{F}}(1 + L_p))^{T}}{1- (L_{\bar{F}}(1 + L_p))}\right) + (L_{\bar{F}}(1 + L_p)))^{T}\frac{\sqrt{|S|-1}}{2\sqrt{N}}\bigg],
\end{align*}
by Lemma \ref{lem:conv of meas}.
For the last term $J(\hat{p}) - J^N(\hat{p})$, we can apply the same argument that we just described. Similarly, we obtain:
$$
  J^N(p^*, \dots, p^*) - J^N(\hat{p}, \dots, \hat{p}) \leq 2TL_{\Psi}(1 + L_p)\bigg[\frac{|S|}{4\sqrt{N}}\left(\frac{1 - (L_{\bar{F}}(1 + L_p))^{T}}{1- (L_{\bar{F}}(1 + L_p))}\right) + (L_{\bar{F}}(1 + L_p)))^{T}\frac{\sqrt{|S|-1}}{2\sqrt{N}}\bigg]
$$
\end{proof}

\section{Proof of Theorem~\ref{thm:DPP-MF}}
\label{sec:proof-thm-dpp}

Let us prove Theorem~\ref{thm:DPP-MF}.
\begin{proof}
To prove this result, we will show that we can reduce the problem to a mean-field optimal control problem in discrete time and continuous space. Then we can apply the well-studied dynamic programming principle for mean-field Markov decision processes (MFMDPs) (see e.g. \cite{motte2022mean,carmona2023model,bauerle2023mean}).
We have:
\begin{align*}
    V_n(\nu)&=\inf_{p \in \mathcal{P}_{n,T}}\sum_{m=n}^T \sum_{(x,a)\in\mathcal{S}}\nu^{p,\nu,n}_m(x,a)\Phi(x, \mu_m^{p,\nu, n})ap_m(x)\\
    &=\inf_{p \in \mathcal{P}_{n,T}}\sum_{m=n}^T\Psi(\nu_m^{p,\nu,n},p_m),
\end{align*}
where $\Psi : \mathcal{P}(\mathcal{X}\times\{0,1\}) \times \mathcal{P}(\mathcal{X}) \to \mathbb{R}$ and it is defined as:
$$
\Psi(\nu,q):=\sum_{(x,a) \in \mathcal{S}}\nu(x,a)\Phi(x,\nu_X)aq(x).
$$
Then we can define the process $Z$ taking value in $\mathcal{P}(\mathcal{X}\times\{0,1\})$:
$$
    Z_n^p=z=\nu;\qquad Z_m^p:=\nu_m^{p,\nu,n}\quad \forall m\geq n
$$
such that it follows the dynamics
$Z^p_{m+1}=\bar{F}(Z_m^p,p_m)$ for every $m=n,\dots, T-1$.
We can write:
$$
    V_n(z) = \inf_{p \in \mathcal{P}_{n,T}}\sum_{m=n}^T\Psi(Z_m^p, p_m),
$$
and we recognize a well-studied control problem for which the DPP is:
$$
    V_n(z) = \inf_{h \in \mathcal{H}}\Psi(z,h) + V_{n+1}(\bar{F}(z,h)).
$$
where $\mathcal{H}$ is the set of all functions $h : \mathcal{X} \to [0,1]$. Finally, we can recover our result:
\begin{equation}
    V_n(\nu) = \inf_{h \in \mathcal{H} }\sum_{(x,a)\in\mathcal{S}}\nu(x,a)\Phi(x, \nu_X)ah(x) + V_{n+1}(\bar{F}(\nu, h)). 
\end{equation}
where $\nu_X$ is the first marginal of the distribution $\nu$.
\end{proof}

\section{Algorithms}
\label{app:algos}

Alg.~\ref{algo:DA-long} and~\ref{algo:DP-long} present respectively the direct approach and the DP-based method.

\begin{algorithm}
\caption{Direct Approach for MFOS}
\label{algo:DA-long}
\begin{algorithmic}[1]
\Require Time-dependent stopping decision neural network: $\psi_{\theta}: \{0,\dots,T\}\times\mathcal{X} \times \mathcal{P}(\mathcal{S})\to[0,1]$, cost function $\Phi$, mean-field dynamic transition $\bar F$, time horizon $T$, max training iteration $N_{\texttt{iter}}$.
\Statex \textbf{// TRAINING}
\For{$k = 0, \dots, N_{\texttt{iter}} - 1$}
    \State Uniformly sample initial distribution $\nu^{p}_0$ from the probability simplex on $\mathbb{R}^{2| \mathcal{X} |}$
    \For{$n = 0, \dots, T$}
    \State $p_n(x) = \psi_{\theta}(x, \nu^{p}_n, n; \theta_k)$ for any $x \in \mathcal{X}$ \Comment{Compute stopping probability}
    \State 
    $\ell_n = \sum_{x \in \mathcal{X}} \nu^{p}_{n}(x, 1) \Phi(x, \mu_n) p_n(x)$ \Comment{Compute loss at time $n$}
    \State $\nu^p_{n+1} = \bar{F}(\nu^p_{n}, p_n)$ \Comment{Simulate MF dynamic}
    \EndFor
    \State $\ell = \sum_{n = 0}^{T} \ell_n$ \Comment{Compute the total loss}
    \State $\theta_{k+1} = \texttt{optimizer\_update}(\theta_k, \ell(\theta_k))$ \Comment{AdamW optimizer step}
\EndFor
\State Set $\theta^* = \theta_{N_{\texttt{iter}}}$
\State \Return $\psi_{\theta^*}$
\end{algorithmic}
\end{algorithm}

\begin{algorithm}
\caption{Dynamic Programming Approach for MFOS}
\label{algo:DP-long}
\begin{algorithmic}[1]
\Require A sequence of stopping decision neural network: $\psi^{n}_{\theta}: \mathcal{X} \times \mathcal{P}(\mathcal{S})\to[0,1]$ for $n \in \{0, \dots, T - 1\}$, cost function $\Phi$, mean-field dynamic transition $\bar F$, time horizon $T$, max training iteration $N_{\texttt{iter}}$.
\Statex \textbf{// TRAINING}
\State Set $\psi_{\theta}^T = 1$ since all distribution stopped at time $T$.
\For{$n = T - 1, \dots, 0$} \Comment{Train backward in time}
\For{$k = 0, \dots, N_{\texttt{iter}} - 1$}
    \State Uniformly sample initial distribution $\nu^{p}_n$ from the probability simplex on $\mathbb{R}^{2| \mathcal{X} |}$
    \For{$m = n, \dots, T$}
    \If {$ m = n$}
    \State $p_m(x) = \psi^{m}_{\theta}(x, \nu^{p}_m; \theta_{k}^{n})$ \Comment{Compute with NN for current time}
    \Else
    \State $p_{m}(x) = \psi^{m}_{\theta}(x, \nu^{p}_{m}; \theta^{m, *})$ \Comment{Compute with trained NN from future time}
    \EndIf
    \State 
    $\ell_m = \sum_{x \in \mathcal{X}} \nu^{p}_{m}(x, 1) \Phi(x, \mu_m) p_m(x)$ \Comment{Compute loss at time $m$}
    \State $\nu^p_{m+1} = \bar{F}(\nu^p_{m}, p_m)$ \Comment{Simulate MF dynamic}
    \EndFor
    \State $\ell = \sum_{m = n}^{T} \ell_m$ \Comment{Compute the total loss from time $n$ to $T$}
    \State $\theta^{n}_{k+1} = \texttt{optimizer\_update}(\theta_k^{m}, \ell(\theta^{n}_k))$ \Comment{AdamW optimizer step}
\EndFor
\State Set $\theta^{n, *} = \theta_{N_{\texttt{iter}}}^{n}$ \Comment{Stored trained weight}

\EndFor

\end{algorithmic}
\end{algorithm}

\section{Implementation details}
\label{app: impl-details}
In this section, we will discuss the choice of neural networks, training batch size, learning rate, and iterations, and all the related hyperparameters as well as computational resources used. 

\textbf{Neural Network Architectures: } We have $4$ variants of neural networks. 

For the direct approach, the neural network takes an input time $t$, while for the DPP approach, the neural network does not need time input. 

For the asynchronous stopping problem, besides time, the neural network has two spatial inputs 1) the state $x$, represented as an integer, goes through an embedding layer with learnable parameters and the results are fed to other operations. 2) the distribution $\nu$, represented as a vector, is inputted to the neural net directly. For the synchronous stopping problem, the neural network only has one spatial input, which is the distribution $\nu$, and is treated as the same way as discussed before.

In general, our neural network has the following structure. Our neural network takes an input pair $(x,t)$, where $x$ is the spatial input, $t$ is the time. If $t$ is a needed input, then it is passed through a module to generate a standard sinusoidal embedding and then fed to $2$ fully connected layers with Sigmoid Linear Unit (SiLU) and generate an output $t_{\text{out}}$. Spatial input $x$ is passed through an MLP with $k$ residual blocks, each containing $4$ linear layers with hidden dimension $D$ and SiLU activation. This generates an output $y_{\text{out}}$. Our final output $\text{out}$ is computed through,
\begin{align*}
\text{out} = \text{Outmod}(\operatorname{GroupNorm}(y_{\text{out}} + t_{\text{out}}))
\end{align*}
where $\text{Outmod}$ is an out module that consists of $3$ fully connected layers with hidden dimension $D$ and SiLU activation, $\operatorname{GroupNorm}$ stands for group normalization. If $t$ is not a needed input, then set $t_{\text{out}} = 0$.

For all the test cases we have experimented with, we use $k = 3$, $D = 128$ for all the 1D experiments and $k=5, D = 256$ for the 2D experiments. \\

\textbf{Computational Resources: } We run all the numerical experiments on an RTX 4090 GPU and a MacBook Pro with M2 Chip. For any of the test cases, one run took at most $3$ minutes on GPUs and $7$ minutes on CPUs.

\textbf{Training Hyperparameters: } For all the experiments, we choose an initial learning rate $10^{-4}$ of the AdamW optimizer. Each training is at most $10^{4}$ iterations, with a batch size $128$. 
The number of training iterations is chosen based on numerical evidence and trial and error. We start with a moderate number and then increase it if the model shows signs of undertraining and is far from convergence.

\section{Numerical Experiments details}\label{app:ex}
This section aims to complete the results of the $6$ numerical experiments conducted. While some of the following plots have been previously discussed in Section \ref{sec:ex}, we provide the full descriptions of Example \ref{app:ex1-details} and Example \ref{app:ex2-details} here for the sake of completeness.
\subsection{Example 1: Towards the Uniform}
\label{app:ex1-details}
We take state space $\mathcal{X} = \{0,1,2,3,4\}$, time horizon $T=4$, transition function $F(n,x,\mu,\epsilon) = x+1$ which means that the agent deterministically moves to the state on the right, with  boundary at $x=4$ (meaning that once at $4$, the agent does not move anymore), and cost function $\Phi(x,\mu) = \mu(x)$ which depends on the mean field only through the state of the agent (this is sometimes called local dependence). 
For the testing distribution, we take a distribution concentrated on state $x=0$, denoted as $\mu_0 = \delta_0$. 
It can be seen that the optimal strategy consists in spreading the mass to make it as close as uniform as possible (hence the name of this example). 
Fig. \ref{fig:DR_MoveToRight} shows that the testing loss decays towards the true optimal value, and the distribution evolves towards a uniform distribution as expected. Fig. \ref{fig:DPP_MoveToRight} shows the losses with DPP: there is one curve per time step. At time $0$, the value is close to the optimal value. %
First, we explain how the optimal value is computed. Since the agents move deterministically to the right, the only option to freeze some mass at a state $x$ is to do it at time $n$. It can be seen that: for every $n=0,\dots, T$ and for every $x \in \mathcal{X}$, we want to have
$p_n(x=n) = \frac{1}{T+1-n}\mathds{1}_{x=n}$ for $n<T$ and $p_n(x)=1$ for $n=T$. Actually notice that for all $x \neq n$ the choice of $p_n$ is arbitrary so, at every time-step $n$ we can apply the same $p_n$ for every state $x$. This brings us to optimize over the set of synchronous stopping times.

Then we can compute the optimal value and obtain:
$
    V^{*,\delta_0}:= \frac{T+2}{2(T+1)}.
$

Figs. \ref{fig:DR_MoveToRight_synchron} and~\ref{fig:DPP_MoveToRight_synchron} show the result for synchronous stopping. 

\begin{minipage}{0.48\columnwidth}
\begin{figure}[H]%
    \centering
    \begin{subfigure}{1\linewidth}
    \includegraphics[width=1\linewidth]{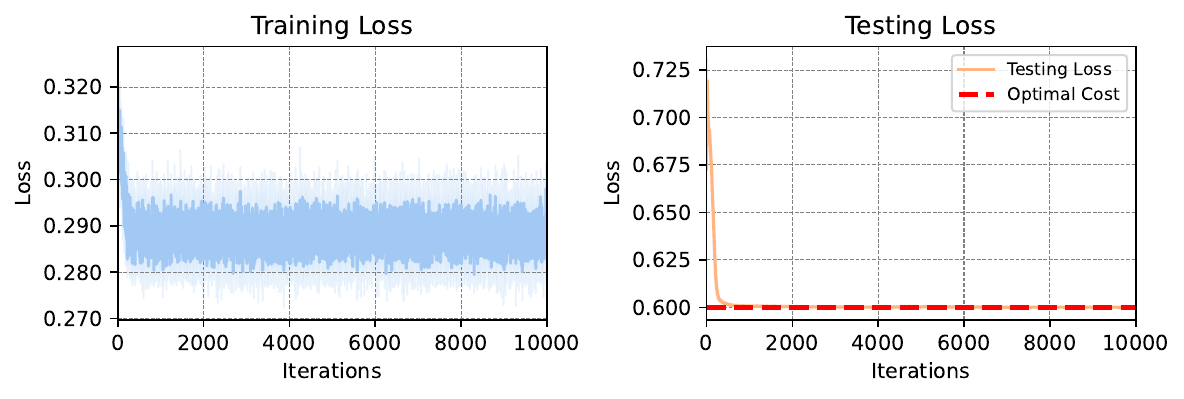}
    \end{subfigure}
    \begin{subfigure}{1\linewidth}
    \includegraphics[width=1\linewidth]{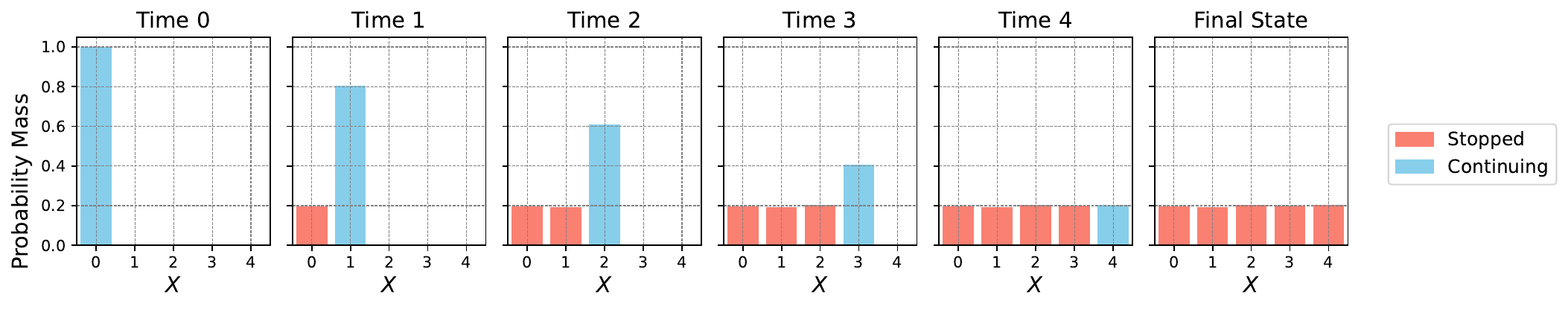} 
    \end{subfigure}
    
    \caption{
    Example 1. DA results, asynchronous stopping. Top: training and testing losses. Bottom: evolution of the distribution after training.
    }
    \label{fig:DR_MoveToRight}
\end{figure}
\end{minipage} \hspace{0.4cm}
\begin{minipage}{0.48\columnwidth}
\begin{figure}[H]%
    \centering
    \begin{subfigure}{1\linewidth}
    \includegraphics[width=1\linewidth]{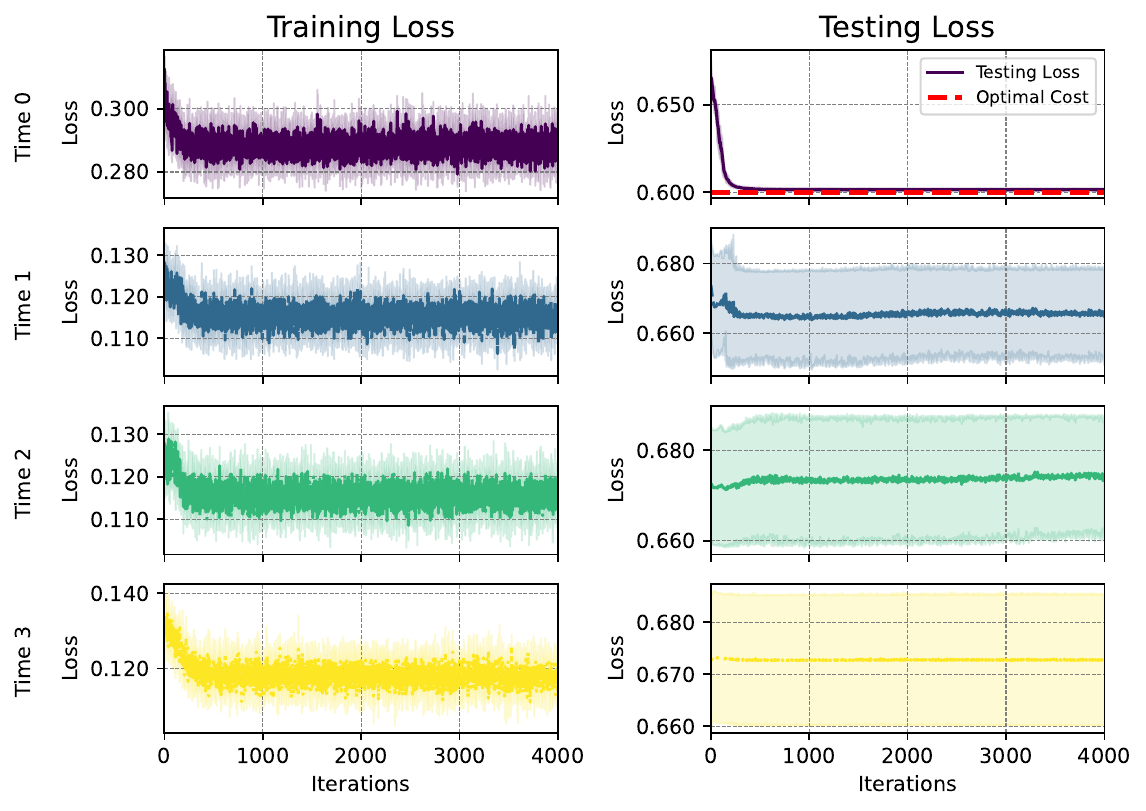}
    \end{subfigure}
    
    \caption{
    Example 1. DPP results, asynchronous stopping. Training and testing losses.
    }
    \label{fig:DPP_MoveToRight}
\end{figure}
\end{minipage}

\begin{minipage}{0.48\columnwidth}
\begin{figure}[H]%
    \centering
    \begin{subfigure}{1\linewidth}
    \includegraphics[width=1\linewidth]{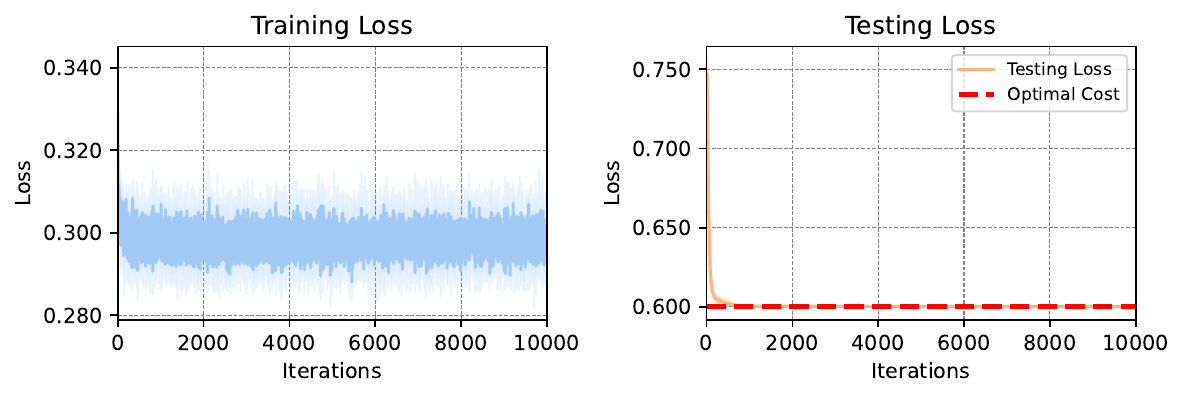}
    \end{subfigure}
    \begin{subfigure}{1\linewidth}
    \includegraphics[width=1\linewidth]{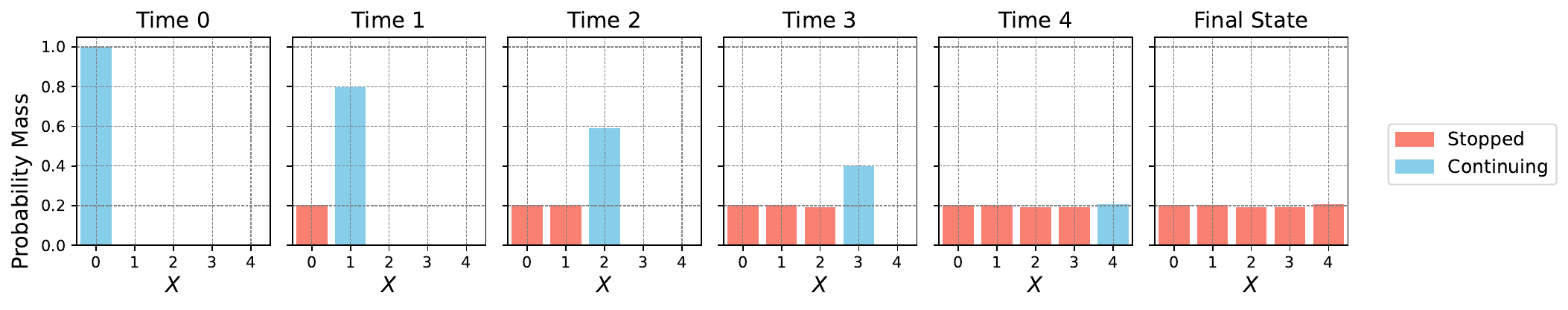} 
    \end{subfigure}
    
    \caption{
    Example 1. DA results, synchronous stopping. Top: training and testing losses. Bottom: evolution of the distribution after training.
    }
    \label{fig:DR_MoveToRight_synchron}
\end{figure}
\end{minipage}
\hspace{0.4cm}
\begin{minipage}{0.48\columnwidth}
\begin{figure}[H]%
    \centering
    \begin{subfigure}{1\linewidth}
    \includegraphics[width=1\linewidth]{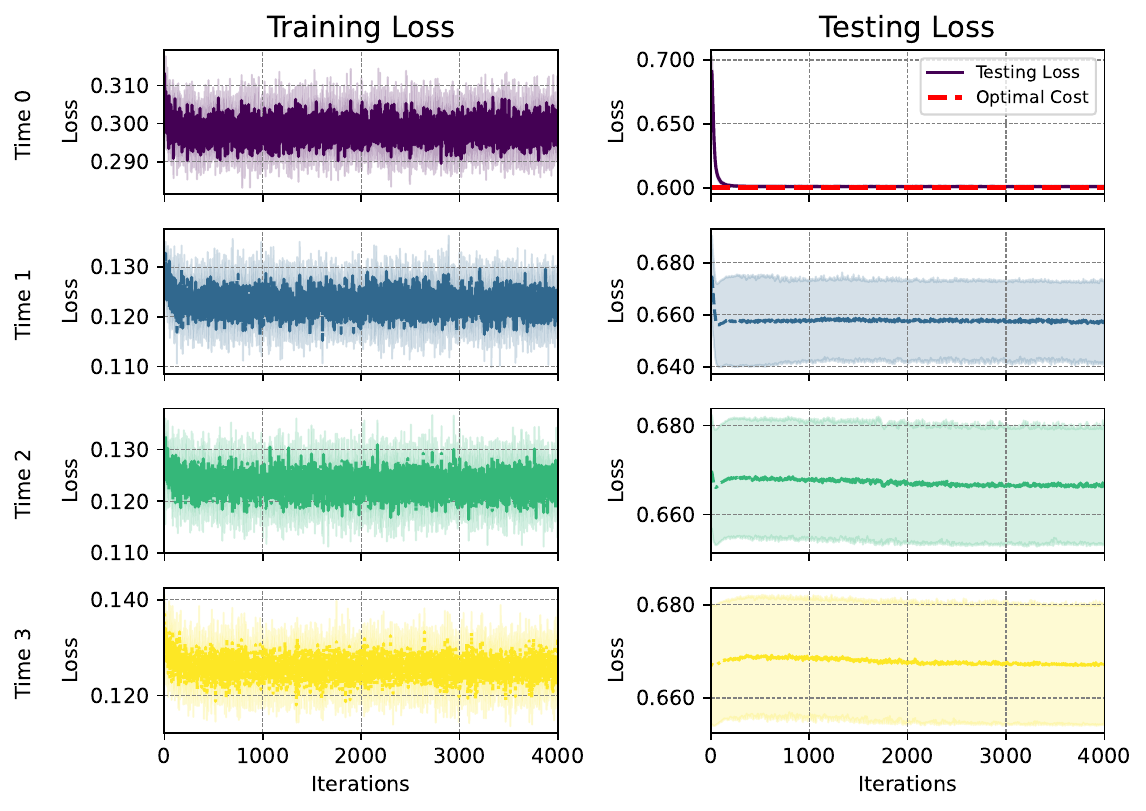}
    \end{subfigure}
    \caption{
    Example 1. DPP results, synchronous stopping. Training and testing losses.
    }
    \label{fig:DPP_MoveToRight_synchron}
\end{figure}
\end{minipage}

\subsection{Example 2: Rolling a Die}
\label{app:ex2-details}
In this example, at every time step, a fair six-sided die is rolled. This takes the role of the noise $\epsilon \sim \mathcal{U}(\mathcal{X})$ where  $\mathcal{X} = \{1,2,3,4,5,6\}$. The system starts in the initial distribution
$
\eta = \frac{1}{4}\delta_1 + \frac{1}{4}\delta_2 + \frac{1}{2}\delta_5, 
$ 
and evolves according to the dynamics \eqref{eq: MF extended dyn} with:
$\mu_0 = \eta,$ $F(n,x, \mu, \epsilon_{n+1}) = \epsilon_{n+1}.$
The social cost function associated to this scenario is $\Phi(x,\mu) = x$. 
DA and DPP results are shown in Figs. \ref{fig:DR_RollaDie} and \ref{fig:DPP_RollaDie} respectively.Again, we observe convergence to the true optimal value here. 
The optimal value is computed as follows. Using the dynamic programming principle described in \eqref{DPP}, we can compute the optimal strategy and the optimal value: 
\begin{equation*}
\begin{split}
    &p_0(\cdot) = (1,0,0,0,0,0)   \qquad p_1(\cdot) = (1,1,0,0,0,0) \\ 
    &p_2(\cdot) = (1,1,0,0,0,0) \qquad p_3(\cdot) = (1,1,0,0,0,0) \\
    &p_4(\cdot) = (1,1,1,0,0,0) \qquad p_5(\cdot) = (1,1,1,1,1,1)\\
\end{split}
\end{equation*}
$$
    V^{*,\eta} = 1,6525.
$$
For our considered initial distribution, this is one of the possible optimal strategies, since we have no mass on some states and thus can assign any stopping probability to them. However, the solution we have presented is the only optimal solution for all possible initial distributions.
Note that if we optimize on the class of synchronous stop times, we do not reach the same optimal value, but we reach a higher value, concluding that for this type of problem, it is better to optimize on asynchronous stop times. In fact, when you narrow the decision only to the class of synchronous stop times is better to stop everyone at the first initial state reaching a value of $\tilde V*=3,25>1,6525= V^* $. 
Synchronous stopping results are shown in Fig. \ref{fig:DR_RollaDie_synchron} and \ref{fig:DPP_RollaDie_synchron}.

\begin{minipage}{0.48\columnwidth}
\begin{figure}[H]%
    \centering
    \begin{subfigure}{1\linewidth}
    \includegraphics[width=1\linewidth]{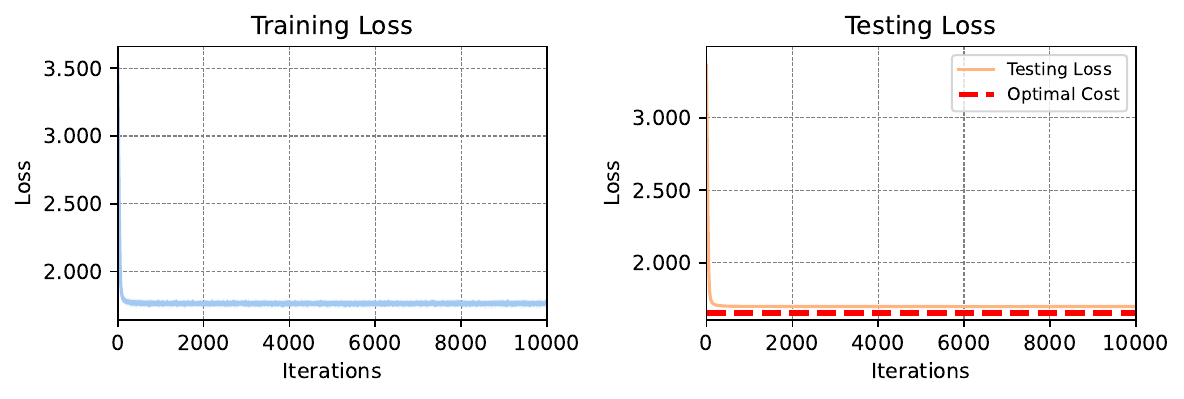}
    \end{subfigure}
    \begin{subfigure}{1\linewidth}
    \includegraphics[width=1\linewidth]{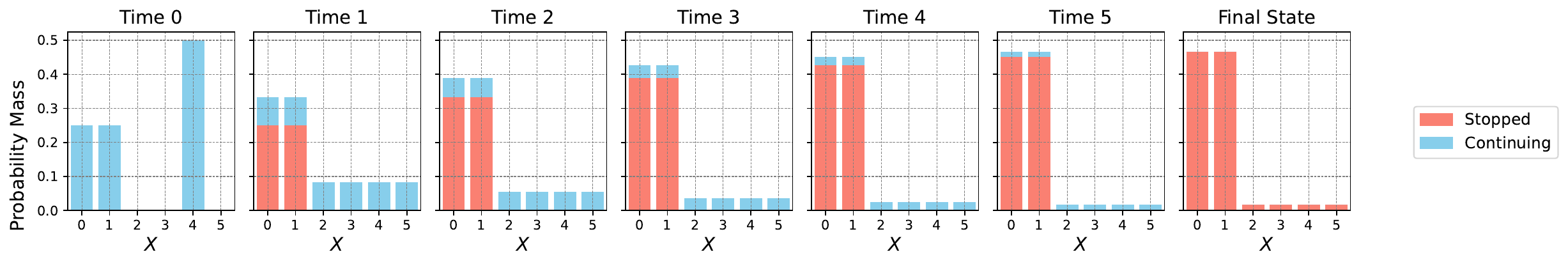} 
    \end{subfigure}
    \caption{
    Example 2. DA results, asynchronous stopping. Top: training and testing losses. Bottom: evolution of the distribution after training.
    }
    \label{fig:DR_RollaDie}
\end{figure}
\end{minipage} \hspace{0.4cm}
\begin{minipage}{0.48\columnwidth}
\begin{figure}[H]%
    \centering
    \begin{subfigure}{1\linewidth}
    \includegraphics[width=1\linewidth]{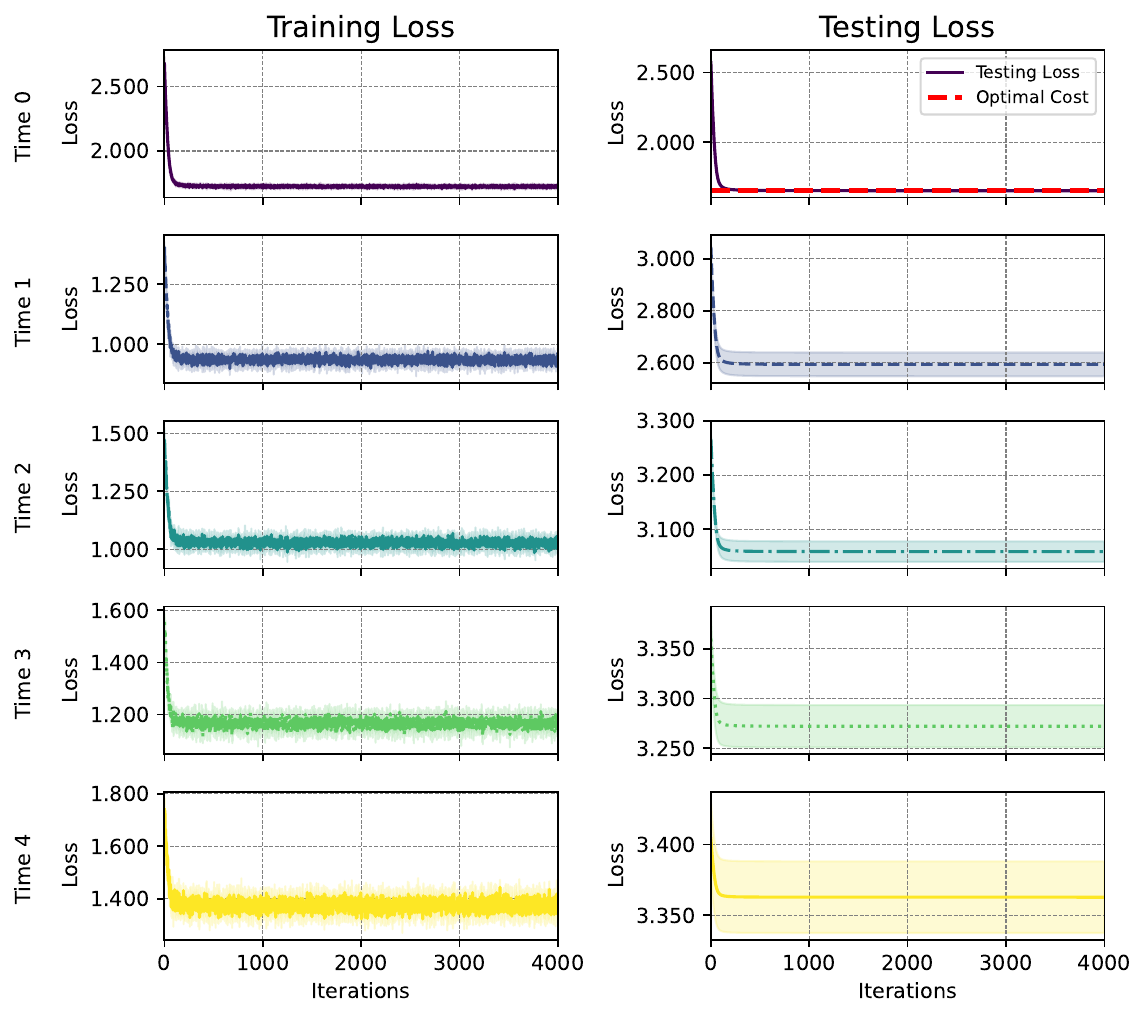}
    \end{subfigure}
    \caption{
    Example 2. DPP results, asynchronous stopping. Training and testing losses.
    }
    \label{fig:DPP_RollaDie}
\end{figure}
\end{minipage}

\vspace{2cm}
\begin{minipage}{0.48\columnwidth}
\begin{figure}[H]%
    \centering
    \begin{subfigure}{1\linewidth}
    \includegraphics[width=1\linewidth]{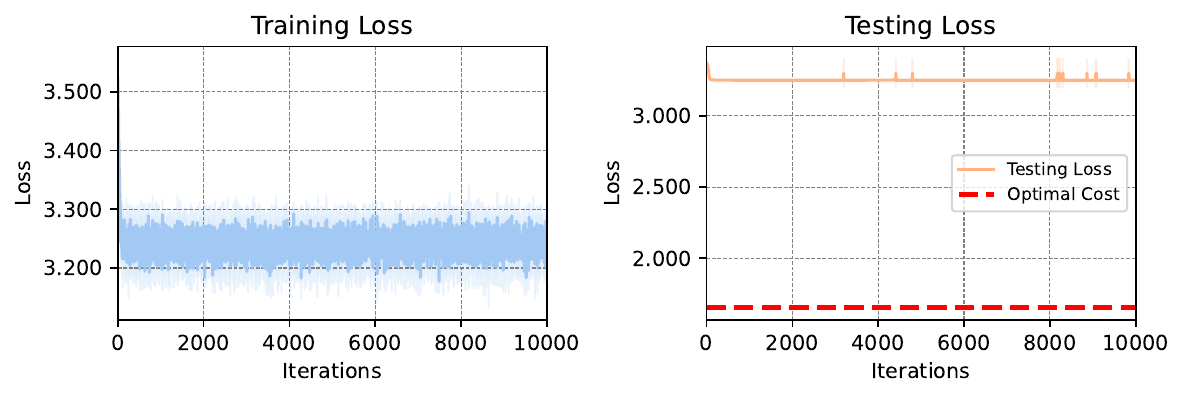}
    \end{subfigure}
    \begin{subfigure}{1\linewidth}
    \includegraphics[width=1\linewidth]{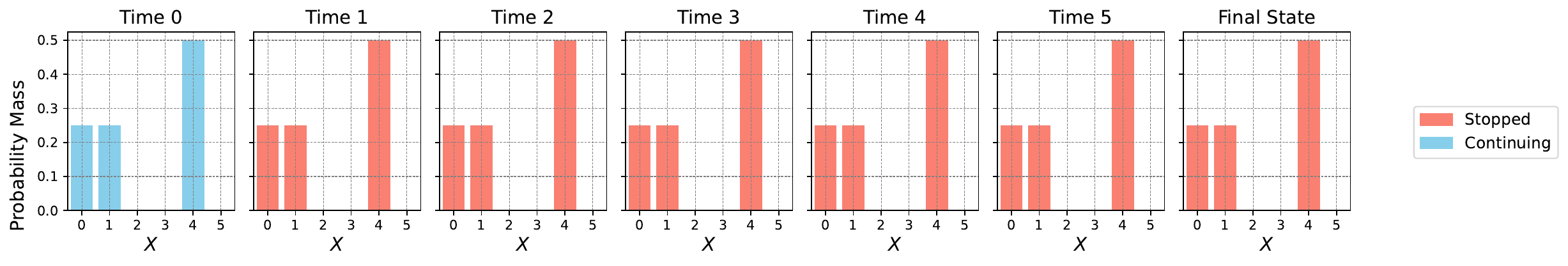} 
    \end{subfigure}
    
    \caption{
    Example 2. DA results, synchronous stopping. Top: training and testing losses. Bottom: evolution of the distribution after training.
    }
    \label{fig:DR_RollaDie_synchron}
\end{figure}
\end{minipage}
\hspace{0.4cm}
\begin{minipage}{0.48\columnwidth}
\begin{figure}[H]%
    \centering
    \begin{subfigure}{1\linewidth}
    \includegraphics[width=1\linewidth]{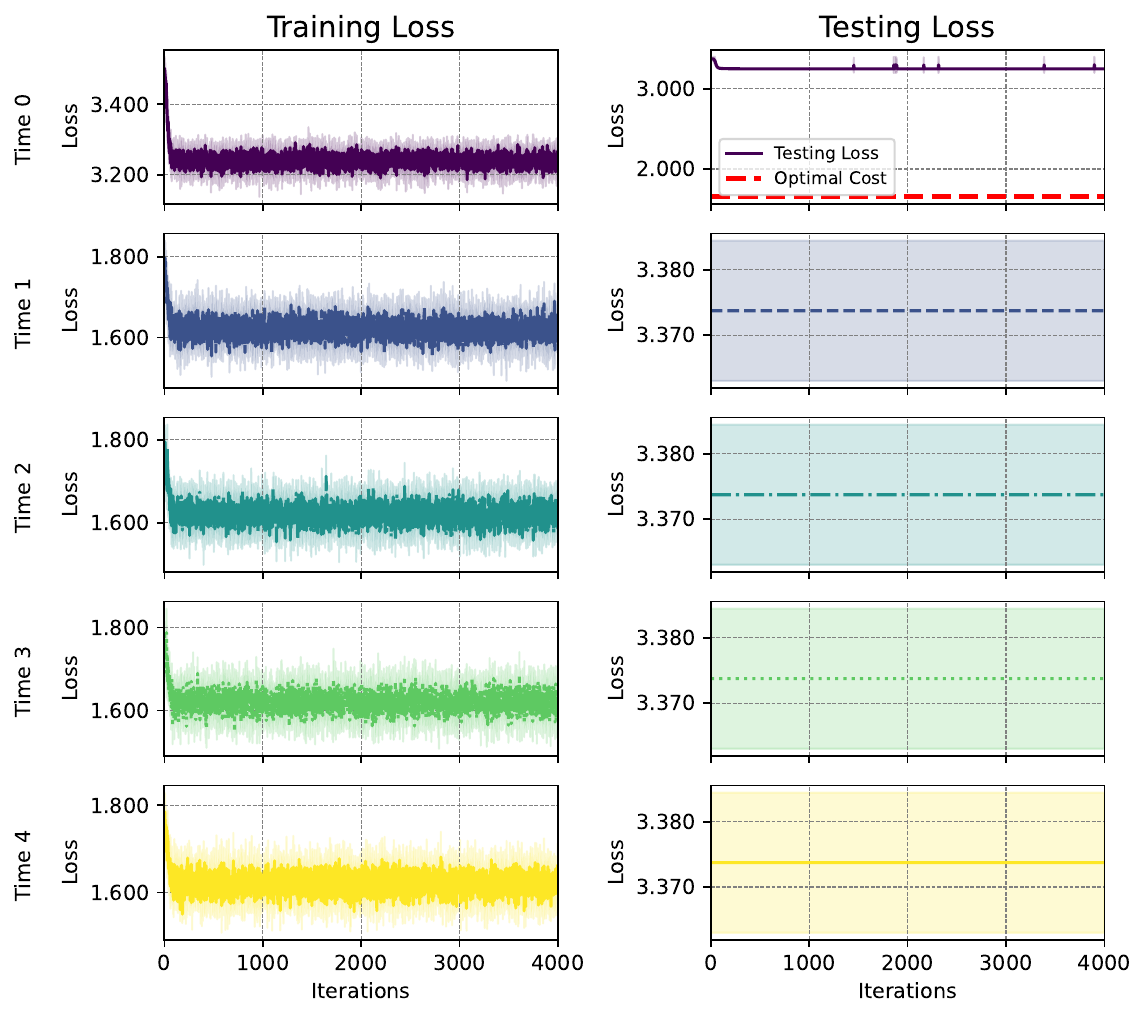}
    \end{subfigure}

    \caption{
    Example 2. DPP results, synchronous stopping. Training and testing losses.
    }
    \label{fig:DPP_RollaDie_synchron}
\end{figure}
\end{minipage}
\newpage
\subsection{Example 3: Crowd Motion with Congestion.}
\label{app:ex3-details}
\blue{ This example extends the previous one, adding a congestion factor. The transition probabilities are:
\begin{equation}
    p_n(z,x):=P(X_{n+1} = z |X_n = x) = 
    \begin{cases}
        \frac{1}{6}(1 - \frac{1}{5} C_{\mathrm{cong}} \mu(x)), &\hbox{ if } z\neq x,\\
        \frac{1}{6}(1 + C_{\mathrm{cong}} \mu(x)), &\hbox{ if } z=x.\\ 
    \end{cases}
\end{equation}}
Let us set $C_{\mathrm{cong}}=0.8$.
However, the reasoning regarding the differences between scenarios in which the central planner optimizes the set of asynchronous stopping times or the set of synchronous stopping times is similar.
DA testing and training losses are shown in Fig. \ref{fig:DR_CongestionDice_loss}. DPP testing and training losses are shown in Fig. \ref{fig:DPP_CongestionDice}.
\begin{figure}[!ht]%
    \centering
    \begin{subfigure}{0.5\columnwidth}
    \includegraphics[width=1\linewidth]{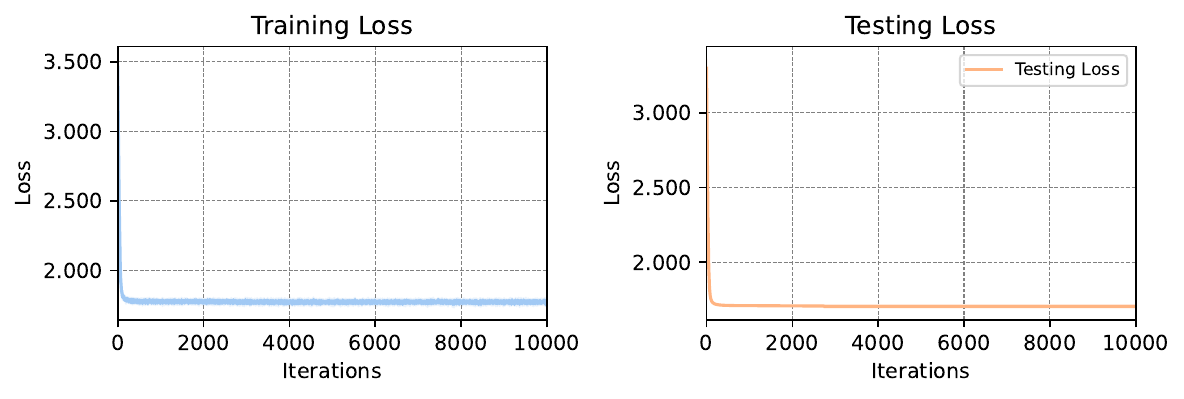}
    \end{subfigure}
    \begin{subfigure}{0.50\columnwidth}
    \includegraphics[width=1\linewidth]{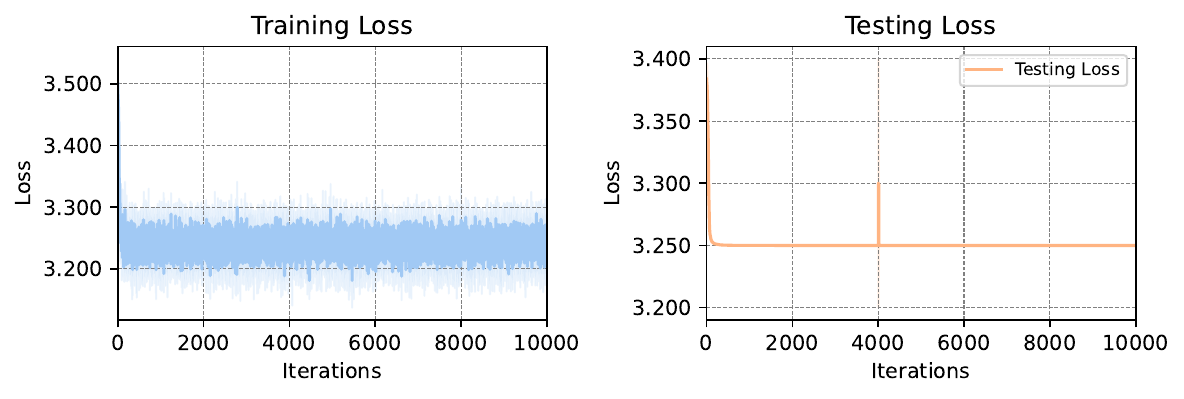}
    \end{subfigure}
    \caption{
    Example 3. DA results. Training and testing losses. Top: asynchronous stopping. Bottom: synchronous stopping.}
    \label{fig:DR_CongestionDice_loss}
\end{figure}
\begin{figure}[h]%
    \centering
        \begin{subfigure}{0.48\linewidth}
    \includegraphics[width=1\linewidth]{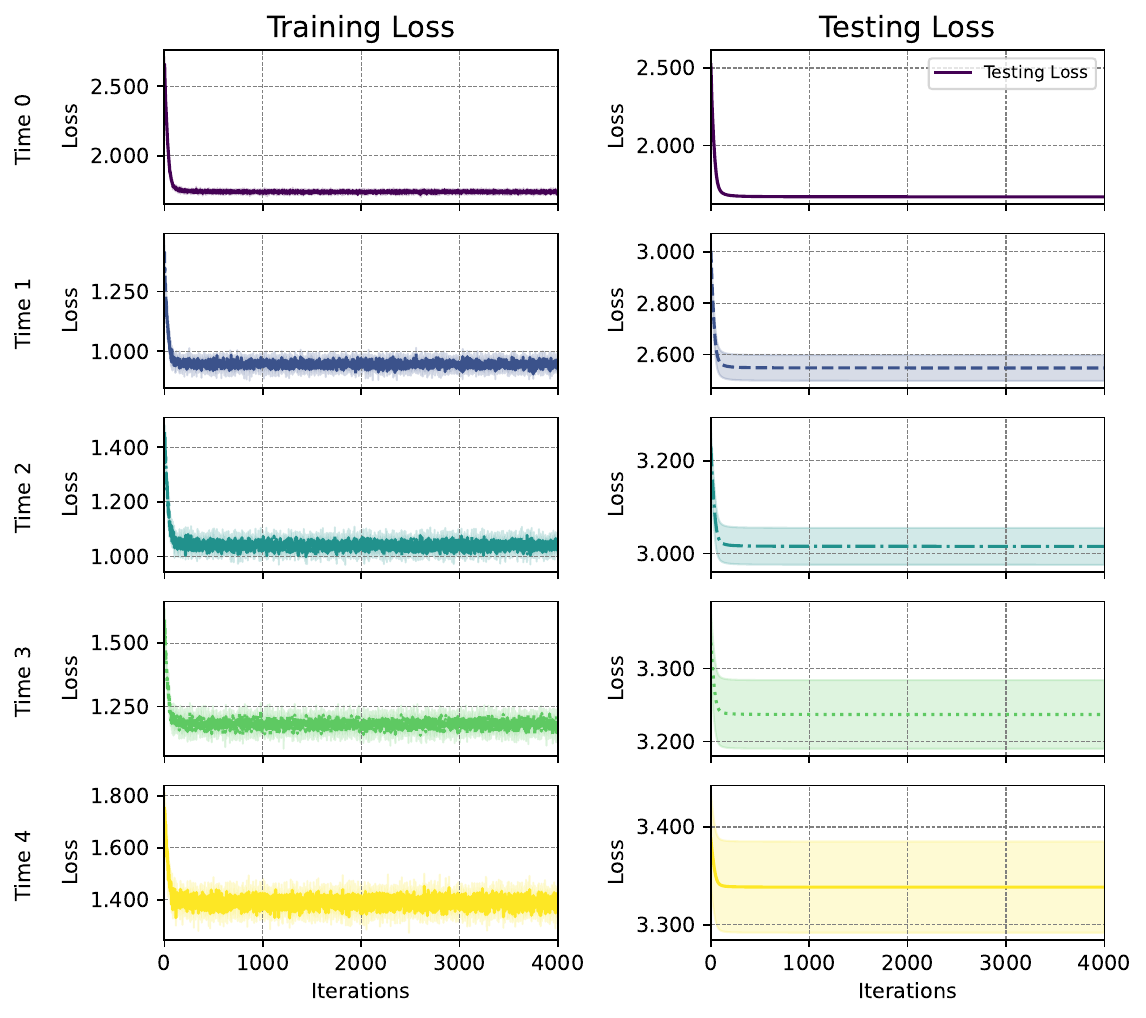}
    \end{subfigure}
    \begin{subfigure}{0.48\linewidth}
    \includegraphics[width=1\linewidth]{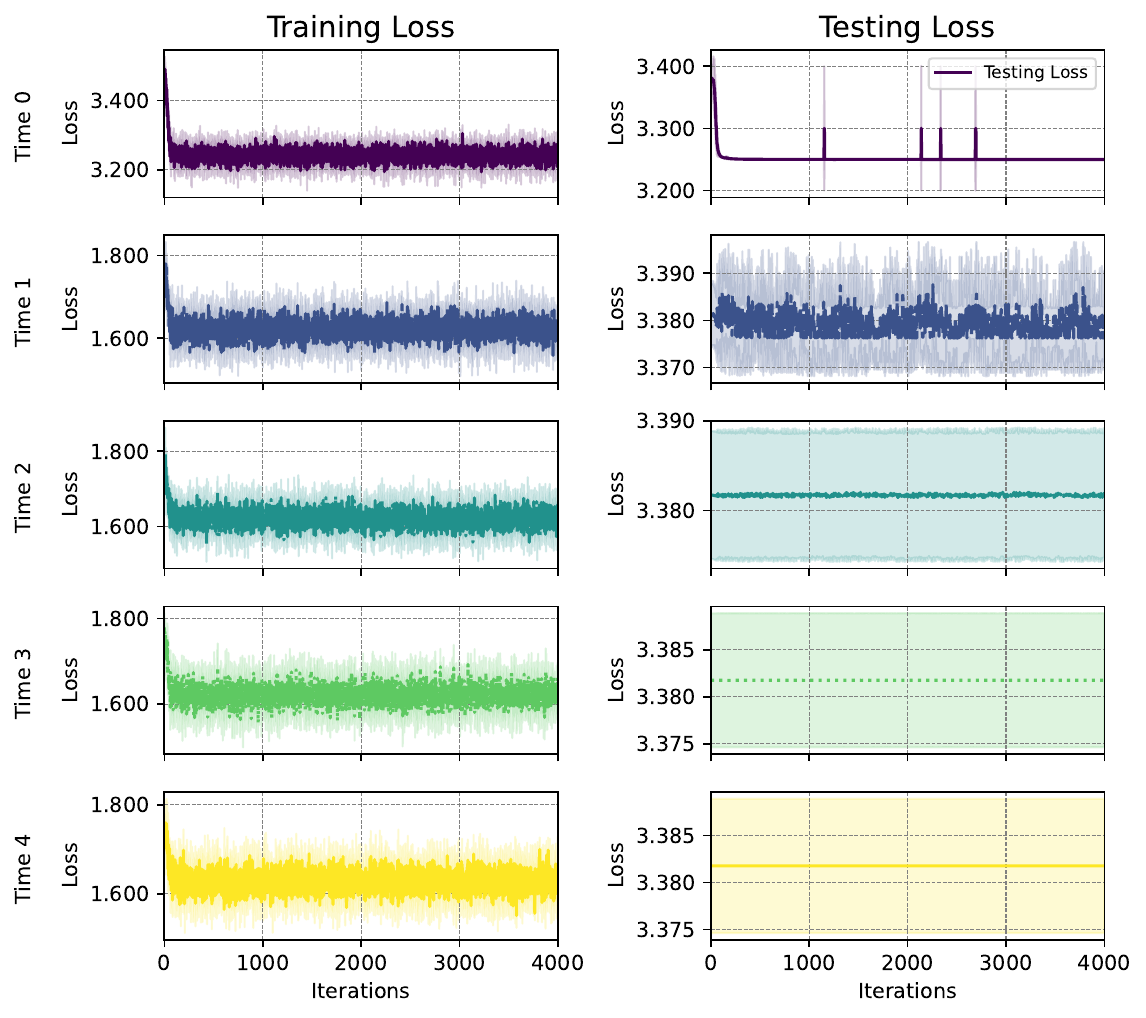}
    \end{subfigure}
    \caption{
    Example 3. DPP results. Training and testing losses. Left: asynchronous stopping. Right: synchronous stopping.
    }
    \label{fig:DPP_CongestionDice}
\end{figure}

\subsection{Example 4: Distributional Cost}
\label{app:ex4-details}
DA results are shown in Fig. \ref{fig:DR_DistCost_loss} and \ref{fig:DR_DistCost_synchron}. DPP results are shown in Fig. \ref{fig:DPP_DistCost_synchron_asynchron}. 

\begin{figure}[!ht]%

    \includegraphics[width=1\linewidth]{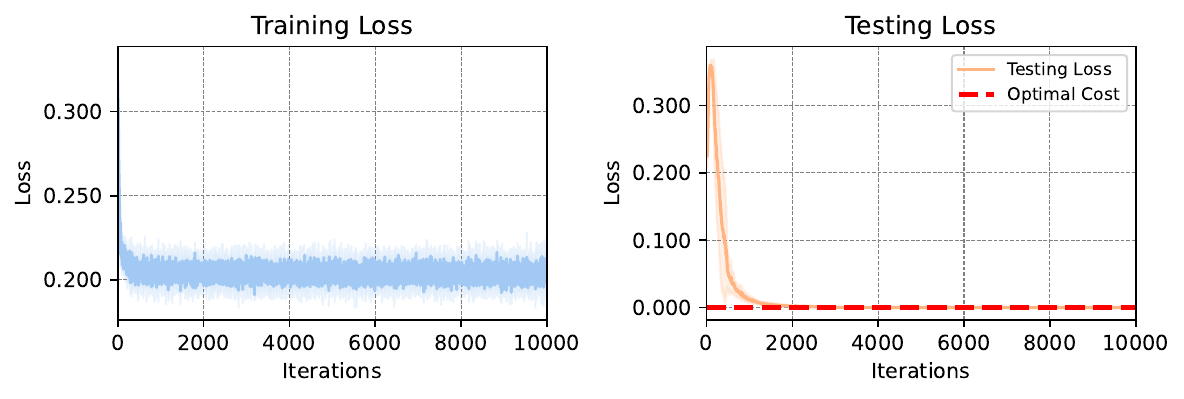}
    \caption{
    Example 4. DA results, asynchronous stopping. Training and testing losses.
    }
    \label{fig:DR_DistCost_loss}
\end{figure}

\begin{figure}[!ht]%
    \centering
    \begin{subfigure}{0.6\columnwidth}
    \includegraphics[width=1\linewidth]{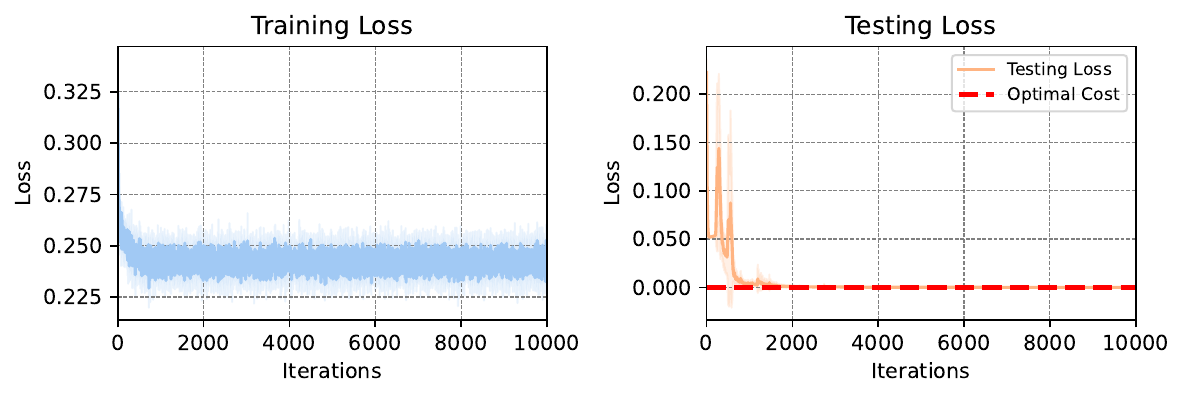}
    \end{subfigure}
    \begin{subfigure}{0.6\columnwidth}
    \includegraphics[width=1\linewidth]{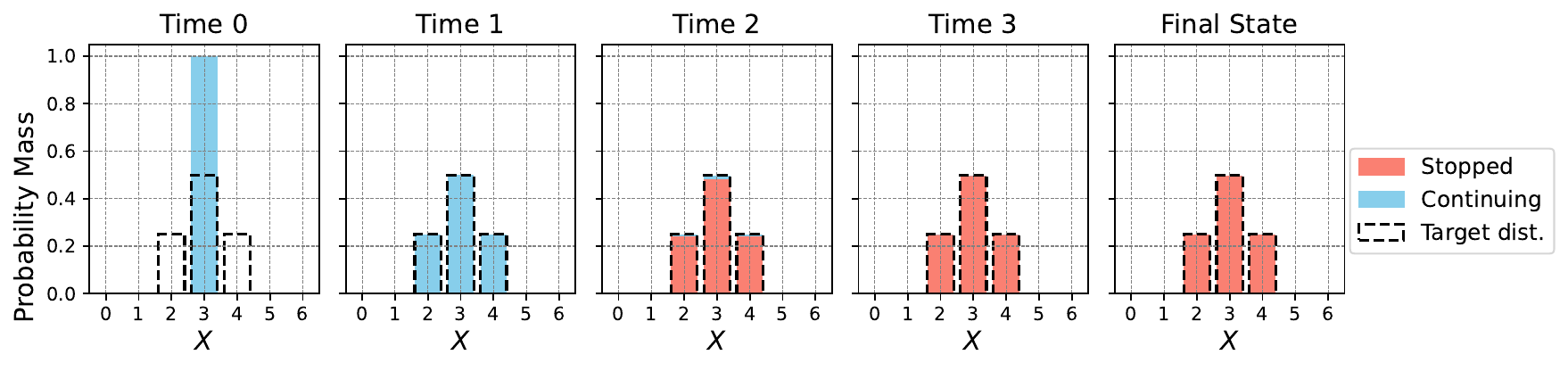} 
    \end{subfigure}
    \caption{
    Example 4. DA results, synchronous stopping. Top: training and testing losses. Bottom: evolution of the distribution after training.
    }
    \label{fig:DR_DistCost_synchron}
\end{figure}
\hspace{0.8cm}

\begin{figure}[H]%
    \centering
    \begin{subfigure}{0.45\columnwidth}
    \includegraphics[width=1\columnwidth]{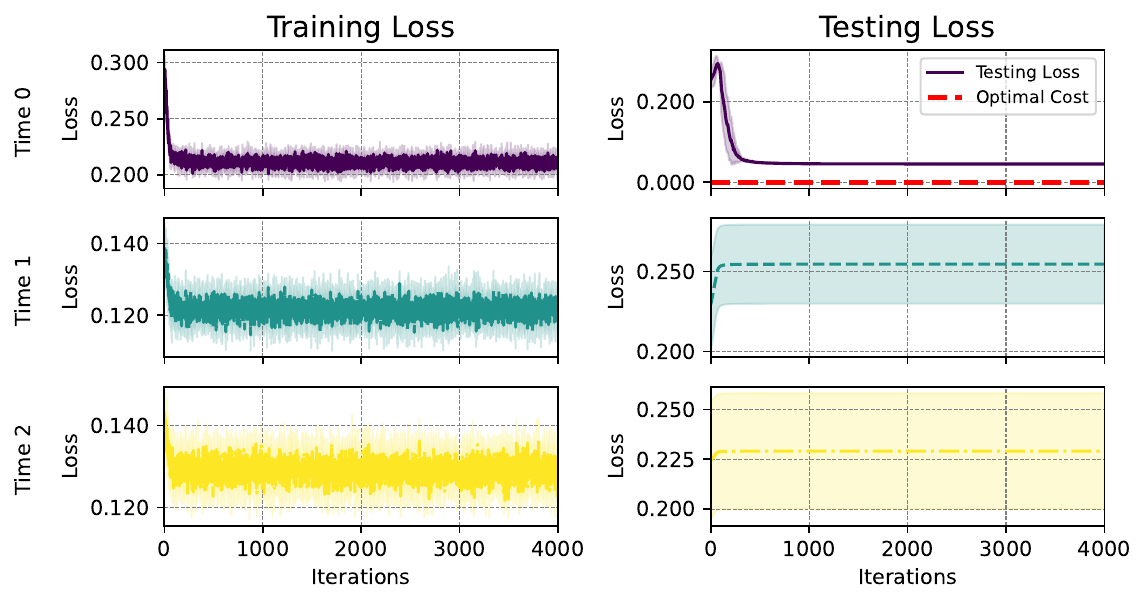}
    \end{subfigure}
    \begin{subfigure}{0.45\columnwidth}
    \includegraphics[width=1\linewidth]{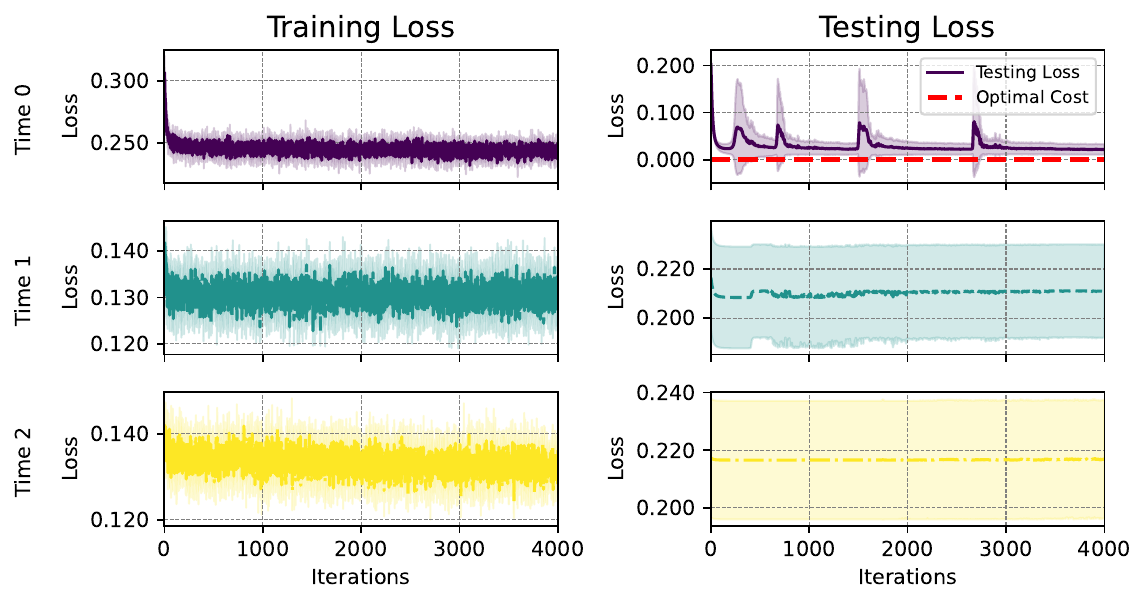}
    \end{subfigure}
    \caption{
    Example 4. DPP results, asynchronous and synchronous stopping. Training and testing losses.
    }
    \label{fig:DPP_DistCost_synchron_asynchron}
\end{figure}

\subsection{Example 5: Towards  the Uniform in 2D}
\label{app:ex5-details}

Asynchronous stopping results, including training losses, testing losses, distribution evolution, and stopping probability are shown in Figs. \ref{fig:DR_TowardsUnif2D_asynchron} and \ref{fig:DPP_TowardsUnif2D_asynchron}. 
Synchronous stopping results are shown in Figs. \ref{fig:DR_TowardsUnif2D_synchron} and \ref{fig:DPP_TowardsUnif2D_synchron}.

\begin{figure}[H]%
    \centering
    \begin{subfigure}{1\linewidth}
    \includegraphics[width=1\linewidth]{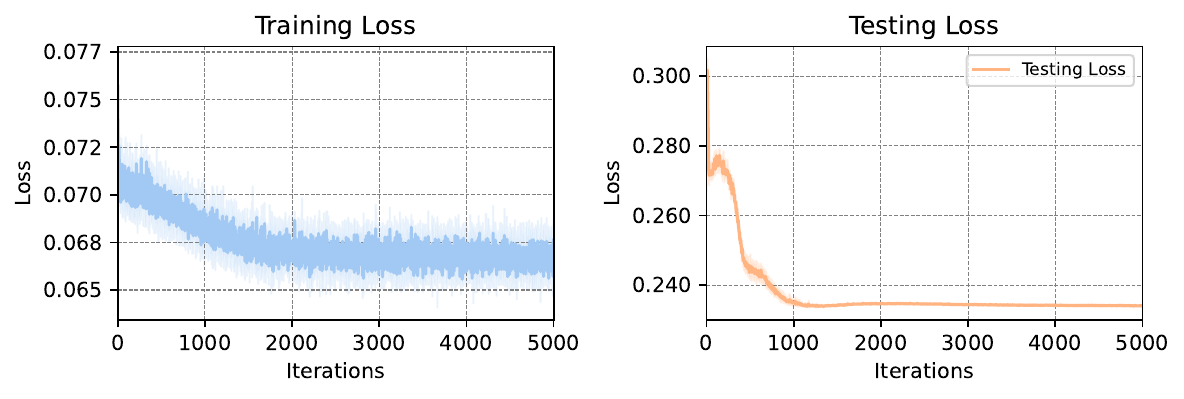}
    \end{subfigure}
    \begin{subfigure}{1\linewidth}
    \includegraphics[width=1\linewidth]{FIGURES/exp_plots/DR_TowardsUnif2D_4_bar.pdf} 
    \end{subfigure}

    \begin{subfigure}{1\linewidth}
    \includegraphics[width=1\linewidth]{FIGURES/exp_plots/DR_TowardsUnif2D_4_prob.pdf}
    \end{subfigure}
    
    \caption{
    Example 5. DA results, asynchronous stopping. Top: training and testing losses. Bottom: evolution of the distribution and stopping probability after training.
    }
    \label{fig:DR_TowardsUnif2D_asynchron}
\end{figure}

\begin{figure}[H]%
    \centering
    \begin{subfigure}{0.8\linewidth}
    \includegraphics[width=1\linewidth]{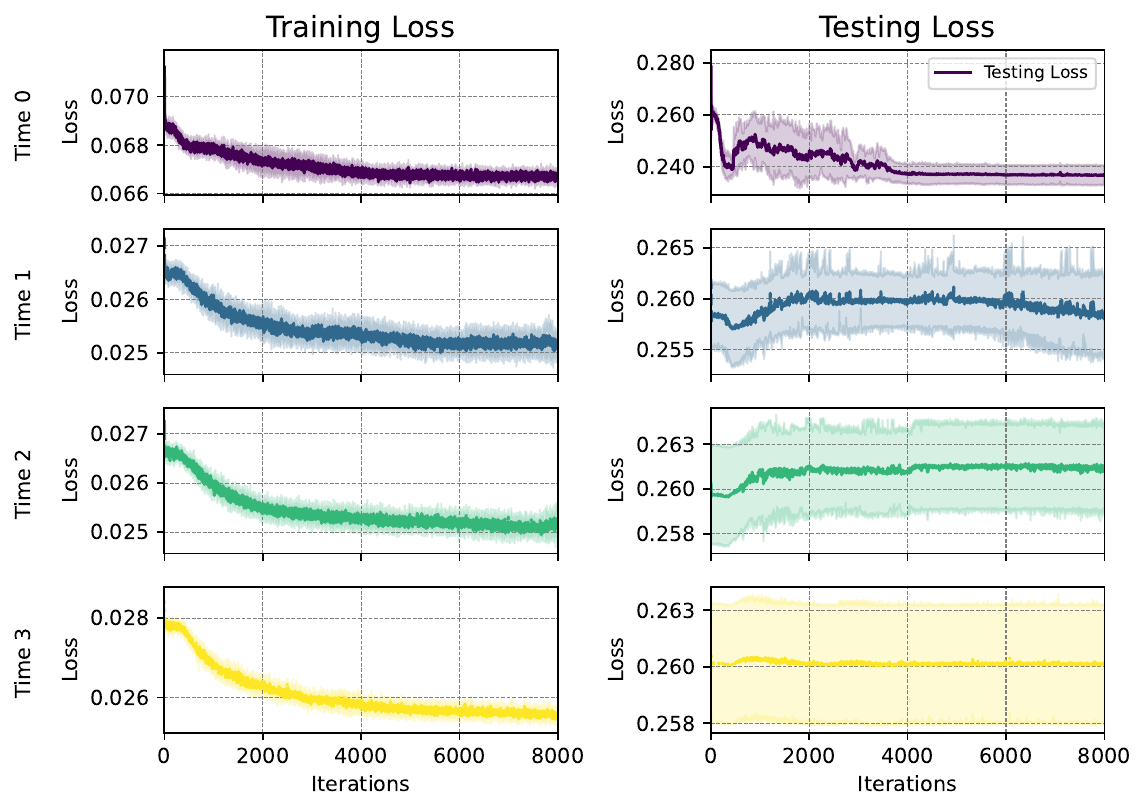}
    \end{subfigure}
    
    \begin{subfigure}{1\linewidth}
    \includegraphics[width=1\linewidth]{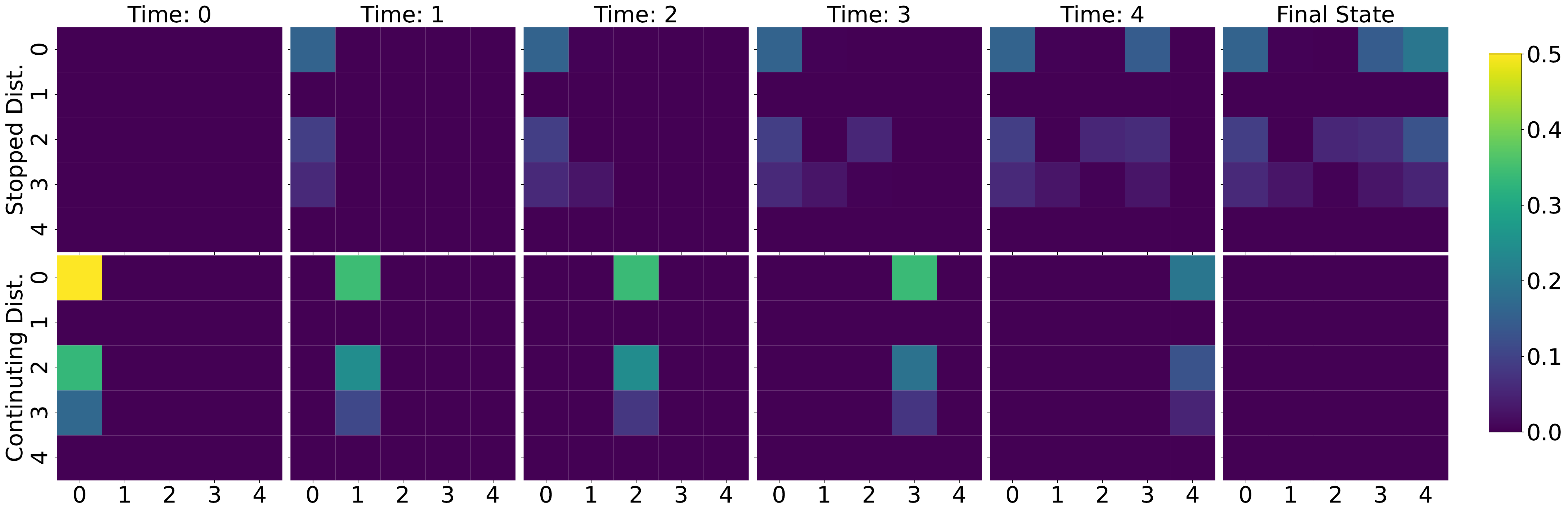} 
    \end{subfigure}

    \begin{subfigure}{1\linewidth}
    \includegraphics[width=1\linewidth]{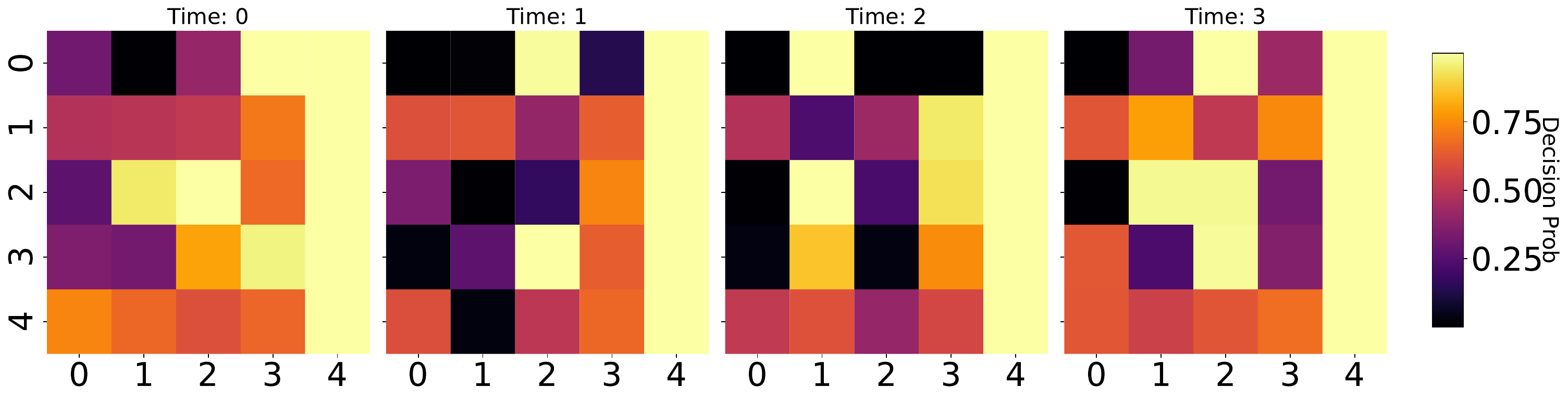}
    \end{subfigure}
    \caption{
    Example 5. DPP results, asynchronous stopping. Top: training and testing losses. Bottom: evolution of the distribution and stopping probability after training.
    }
    \label{fig:DPP_TowardsUnif2D_asynchron}
\end{figure}

\begin{figure}[H]%
    \centering
    \begin{subfigure}{1\linewidth}
    \includegraphics[width=1\linewidth]{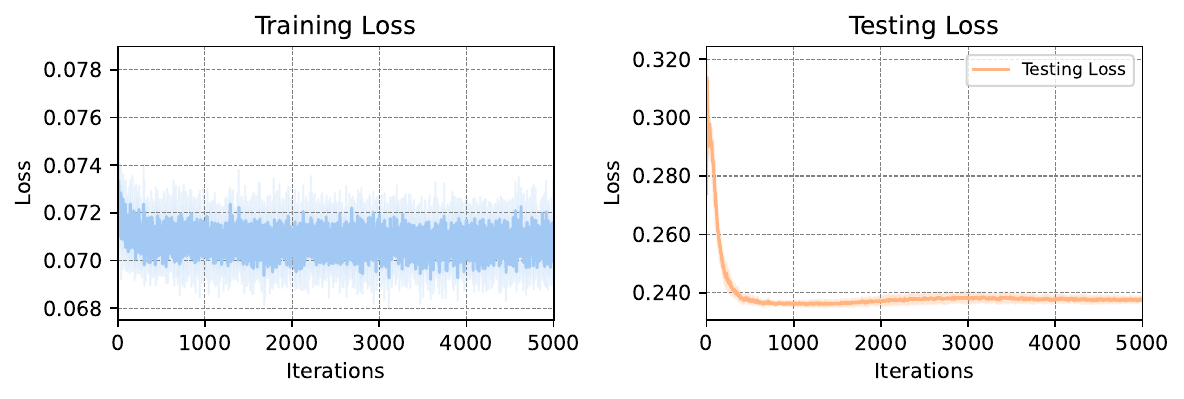}
    \end{subfigure}
    \begin{subfigure}{1\linewidth}
    \includegraphics[width=1\linewidth]{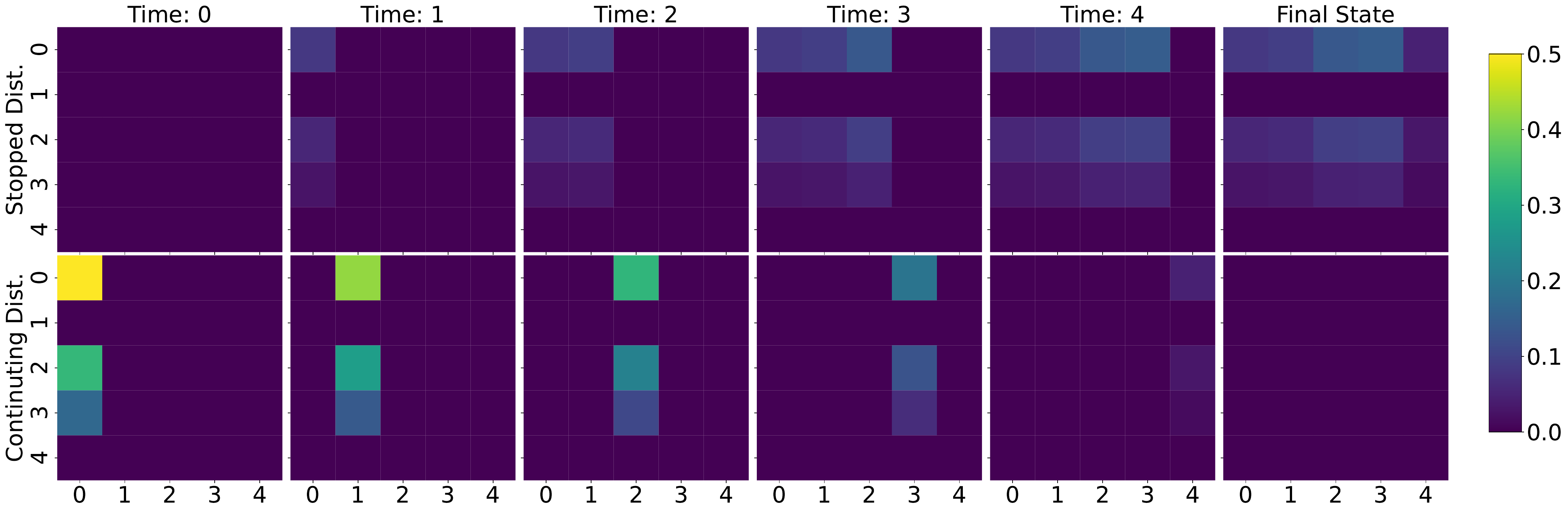} 
    \end{subfigure}
    
    \begin{subfigure}{1\linewidth}
    \includegraphics[width=1\linewidth]{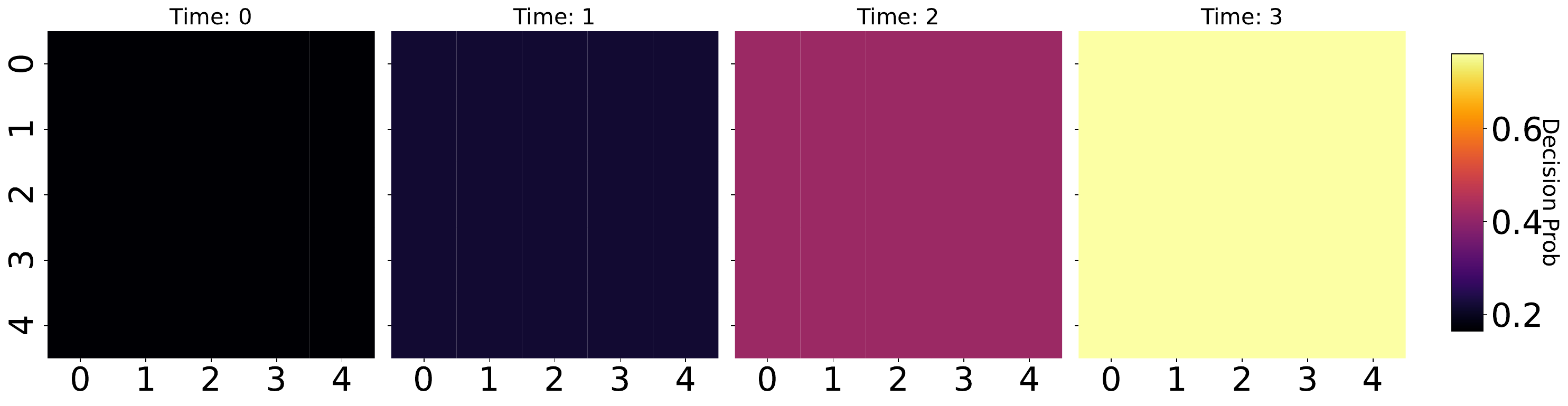}
    \end{subfigure}
    \caption{
    Example 5. DA results, synchronous stopping. Top: training and testing losses. Bottom: evolution of the distribution and stopping probability after training.
    }
    \label{fig:DR_TowardsUnif2D_synchron}
\end{figure}

\begin{figure}[H]%
    \centering
    \begin{subfigure}{1\linewidth}
    \includegraphics[width=1\linewidth]{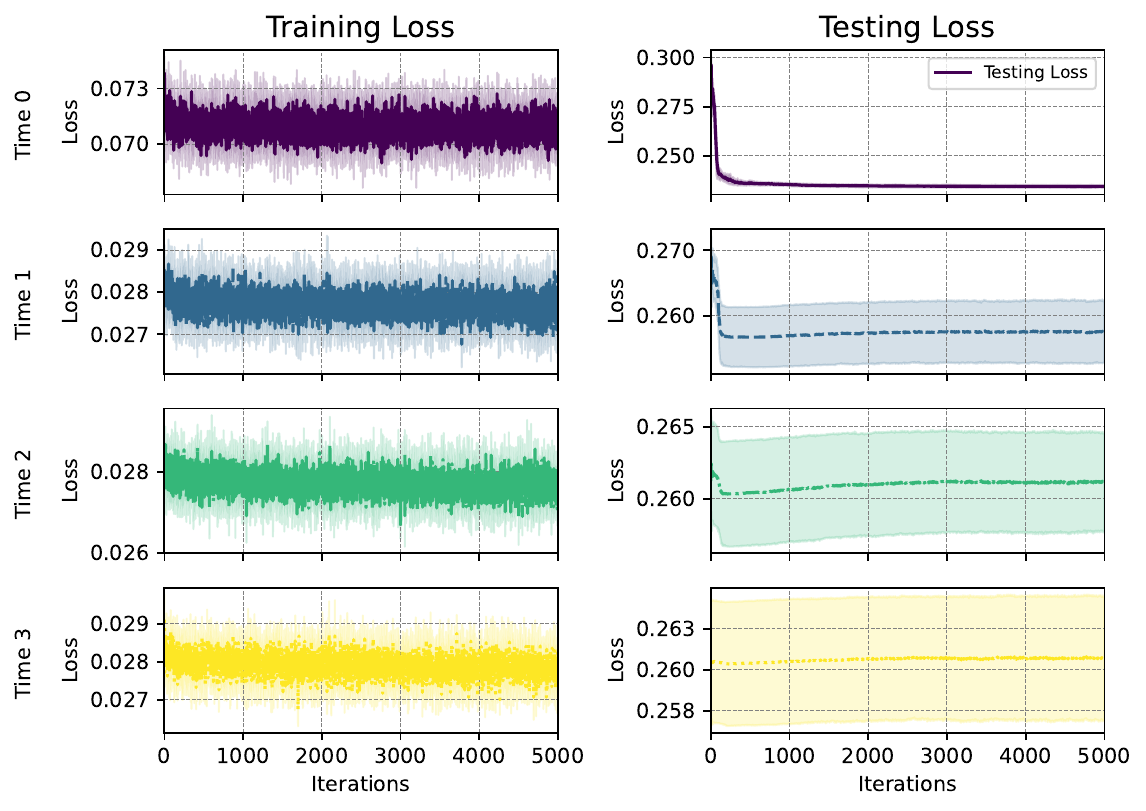}
    \end{subfigure}
    
    \begin{subfigure}{0.8\linewidth}
    \includegraphics[width=1\linewidth]{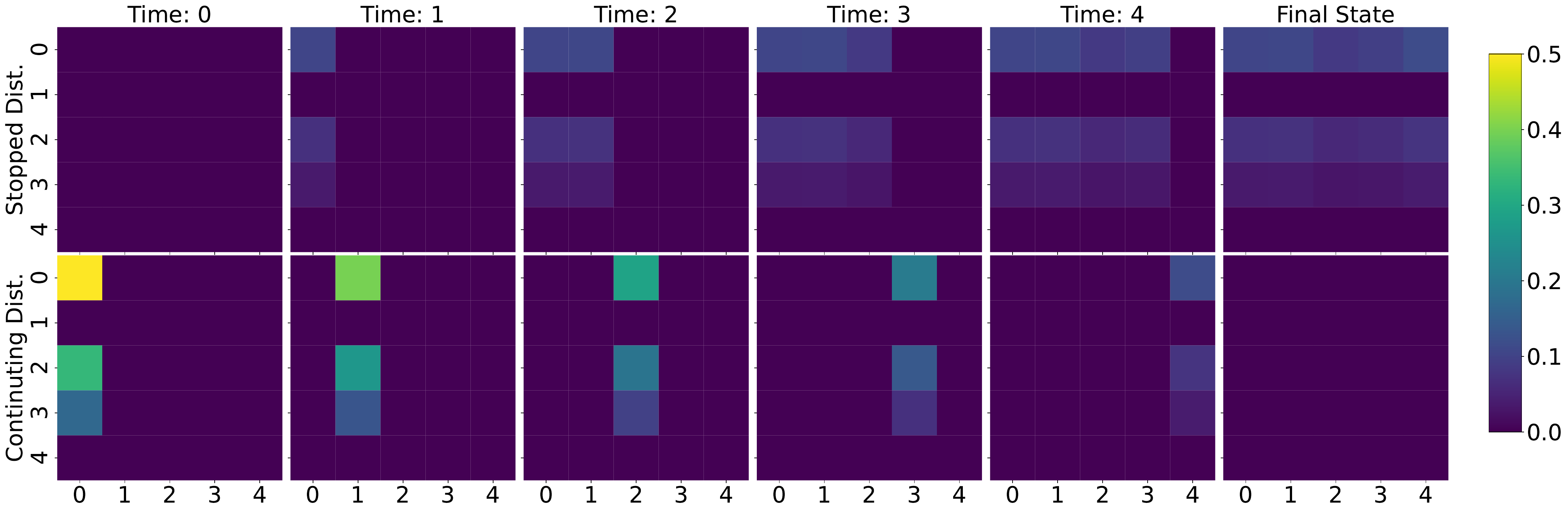} 
    \end{subfigure}

    \begin{subfigure}{1\linewidth}
    \includegraphics[width=1\linewidth]{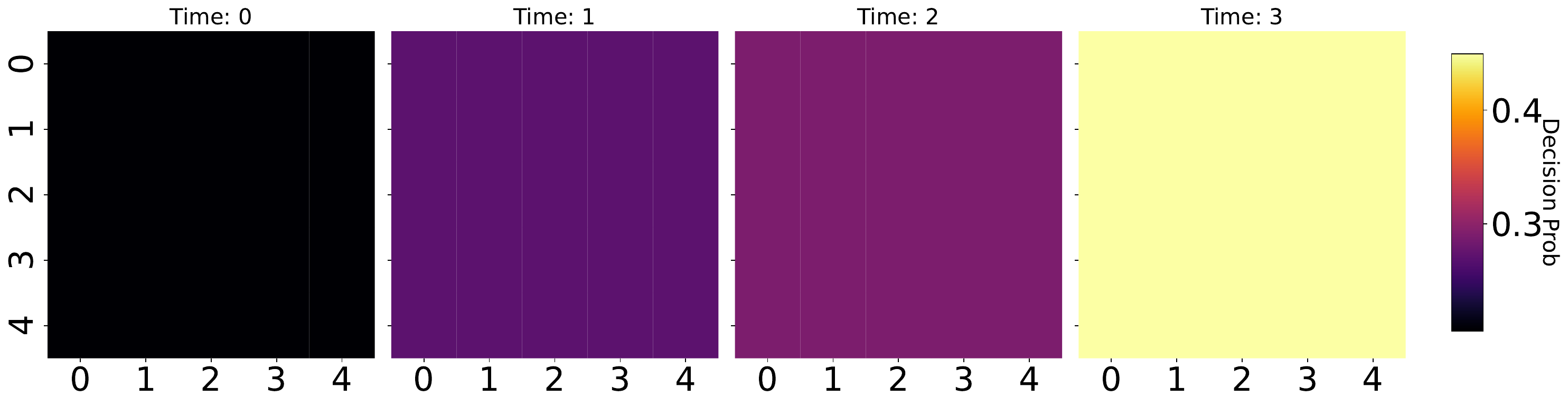}
    \end{subfigure}
    
    \caption{
    Example 5. DPP results, synchronous stopping. Top: training and testing losses. Bottom: evolution of the distribution and stopping probability after training.
    }
    \label{fig:DPP_TowardsUnif2D_synchron}
\end{figure}

\newpage
\subsection{Example 6: Matching a Target with a Fleet of Drones.} 
\label{app:ex6-details}

In this example, we extend our framework by incorporating a terminal cost and common noise. This allows us to consider a richer and more realistic class of MFOS environments. We extend the dynamics defined in \eqref{eq: MF extended dyn} in the following way:
\begin{equation}
    \left\{
\begin{split}
   &X_0^{\alpha} \sim \mu_0,\qquad A_0^\alpha = 1  \\
   &\alpha_n \sim \pi(\cdot |X_n^{\alpha})=Be(p_n( X_n^{\alpha}))\\
   &A_{n+1}^\alpha = A_n^\alpha \cdot (1-\alpha_n)\\
   &X_{n+1}^{{\alpha}} = 
   \begin{cases}
       F(n, X_n^{{\alpha}}, \mu_n^\alpha, \epsilon_{n+1},\epsilon_{n+1}^0), &\hbox{ if } A_n^\alpha \cdot (1-\alpha_n) = 1 \\
       X_n^{\alpha}, &\hbox{ otherwise. }
   \end{cases}
\end{split}
\right.
\end{equation}
where $\epsilon_{n}^0$ is the common noise that affects the dynamics of all agents equally.
Note that with the presence of a common noise, the mean-field distribution $\nu$ is not deterministic, but instead it is a random variable that evolves conditionally with respect to the common noise.

Furthermore the social cost defined in \eqref{eq: Cost extended MF} can be extended by adding a terminal cost: 
\begin{equation}
J(p) = \EE^0\left[\sum_{n=0}^T \sum_{(x,a)\in \cS} \Big( 
    \nu_n^p(x,a)\Phi(x, \nu_{X,n}^p)a p_n(x) \Big) + 
    g(\nu_{X,T}^p)\right],
\end{equation}
where $g:\cP(\cX)\to\mathbb{R}$ is the terminal cost and $\EE^0$ is the expectation with respect the common noise realization.

The results for DA for different target distributions are provided in Fig. \ref{fig:DR_toward_MFOS}. The results for DPP for different target distributions are provided in Fig. \ref{fig:DPP_toward_MFOS}.

It is evident that, unlike the DPP, the optimal strategy in the DA tends to stop with high probability at the final time steps, as clearly illustrated for the target distributions corresponding to the letters ``O'' and ``S''.

\section{Hyperparameters sweep}
 In this section, we show the results of a sweep over the learning rate for Example 1 with the two methods and the two types of stopping times. We consider learning rates $10^{-2}$, $10^{-3}$, and $10^{-4}$ in this order in the plots from top to bottom. 

Direct method stopping: Figs. \ref{fig:DR_MoveToRight_sweep_loss} and \ref{fig:DR_MoveToRight_synchron_sweep_loss} show the losses for the asynchronous and the synchronous stopping times respectively.

Direct method stopping: Figs. \ref{fig:DPP_MoveToRight_sweep_loss} and \ref{fig:DPP_MoveToRight_synchron_sweep_loss} show the losses for the asynchronous and the synchronous stopping times respectively.

\begin{figure}[!ht]
    \centering
    \begin{subfigure}[b]{\columnwidth}
        \includegraphics[width=\columnwidth]{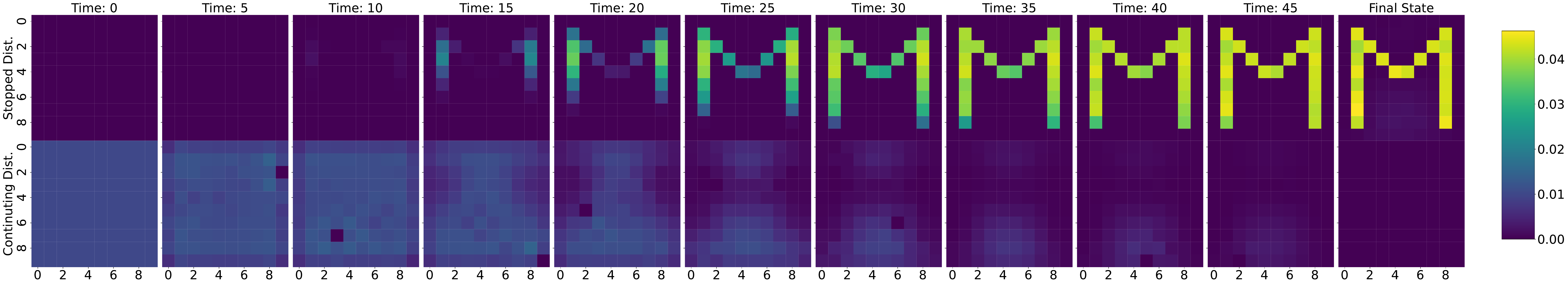}
    \end{subfigure}
    \begin{subfigure}[b]{\columnwidth}
        \includegraphics[width=\columnwidth]{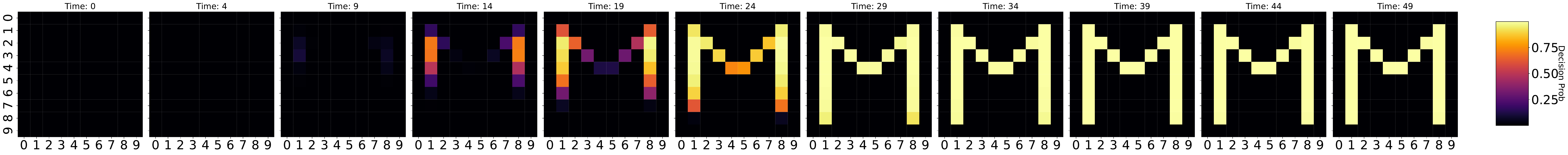}
    \end{subfigure}
    \begin{subfigure}[b]{\columnwidth}
        \includegraphics[width=\columnwidth]{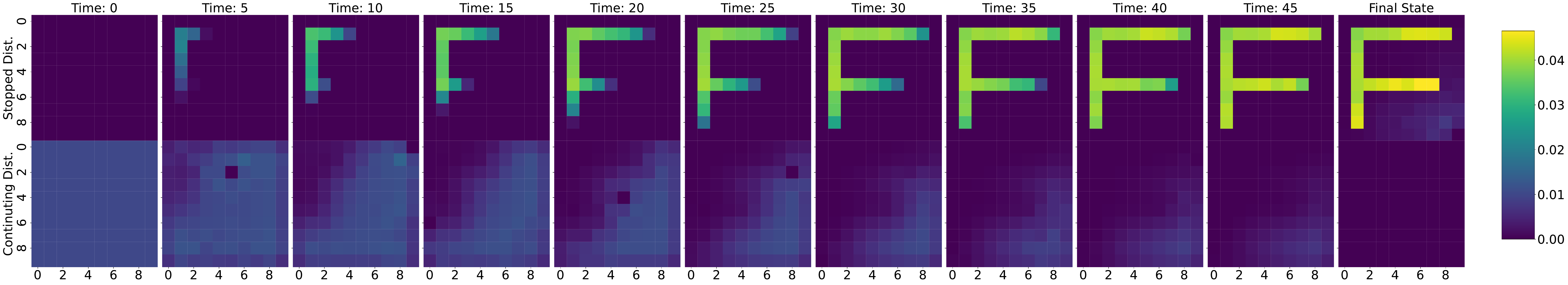}
    \end{subfigure}
    \begin{subfigure}[b]{\columnwidth}
        \includegraphics[width=\columnwidth]{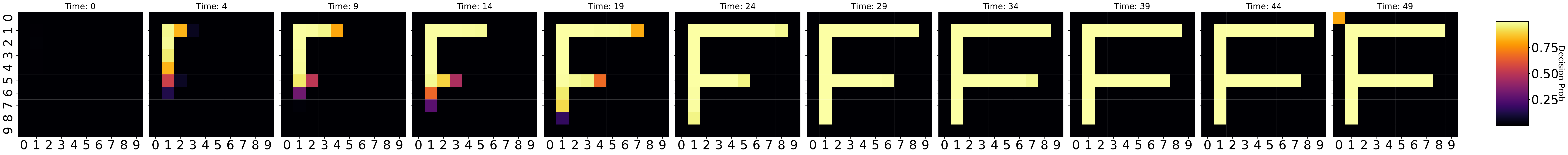}
    \end{subfigure}
    \begin{subfigure}[b]{\columnwidth}
        \includegraphics[width=\columnwidth]{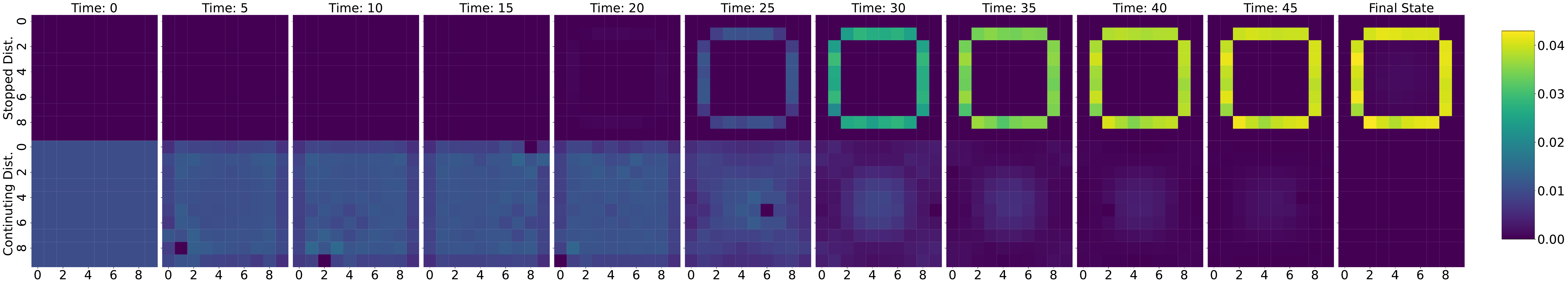}
    \end{subfigure}
    \begin{subfigure}[b]{\columnwidth}
        \includegraphics[width=\columnwidth]{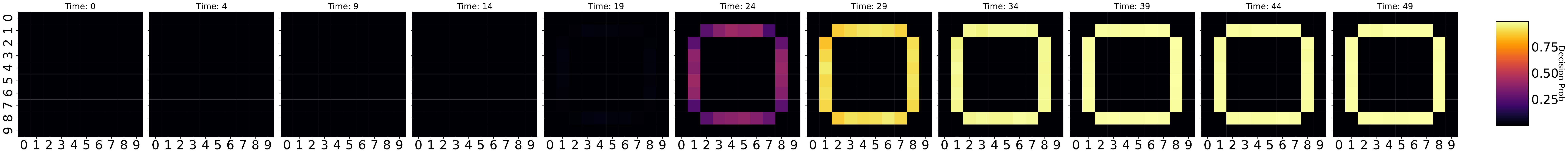}
    \end{subfigure}
    \begin{subfigure}[b]{\columnwidth}
        \includegraphics[width=\columnwidth]{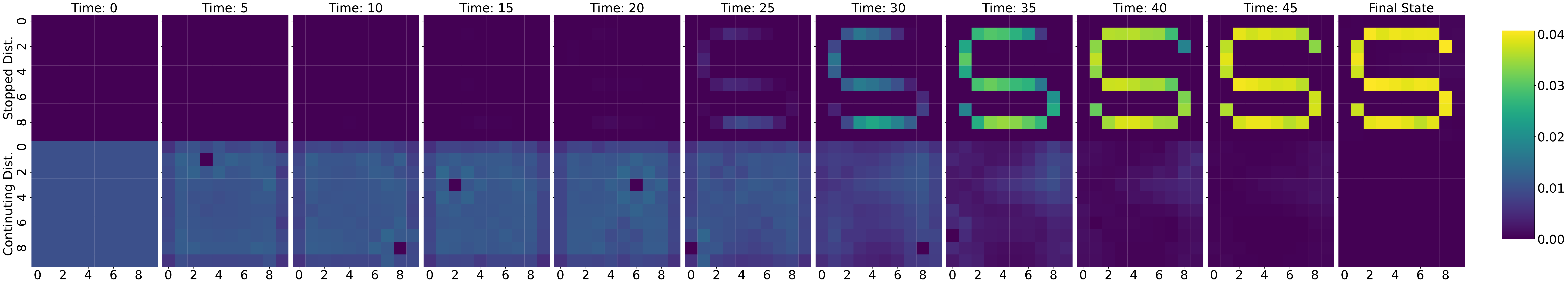}
    \end{subfigure}
    \begin{subfigure}[b]{\columnwidth}
        \includegraphics[width=\columnwidth]{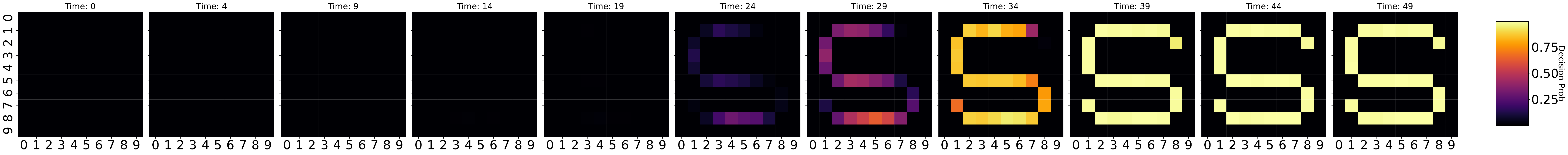}
    \end{subfigure}
    \caption{Example 6. DA results, asynchronous stopping. Match the Letter ``M'', ``F'', ``O'', ``S'', in a $10\times10$ grid with common noise.} \label{fig:DR_toward_MFOS}
\end{figure}
\newpage
\begin{figure}[!ht]
    \centering
    \begin{subfigure}[b]{\columnwidth}
        \includegraphics[width=\columnwidth]{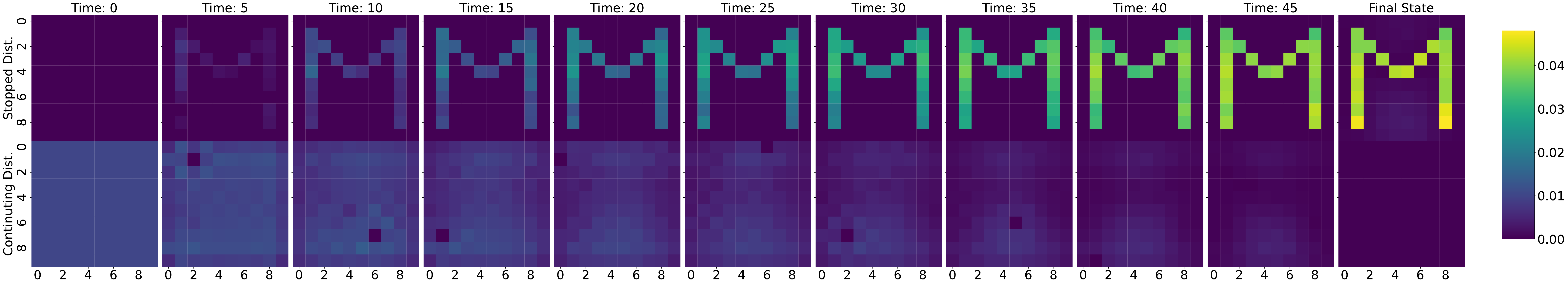}
    \end{subfigure}
    \begin{subfigure}[b]{\columnwidth}
        \includegraphics[width=\columnwidth]{FIGURES/iclr/dpp_letter_M_uniform_dist_stop_prob.pdf}
    \end{subfigure}
    \begin{subfigure}[b]{\columnwidth}
        \includegraphics[width=\columnwidth]{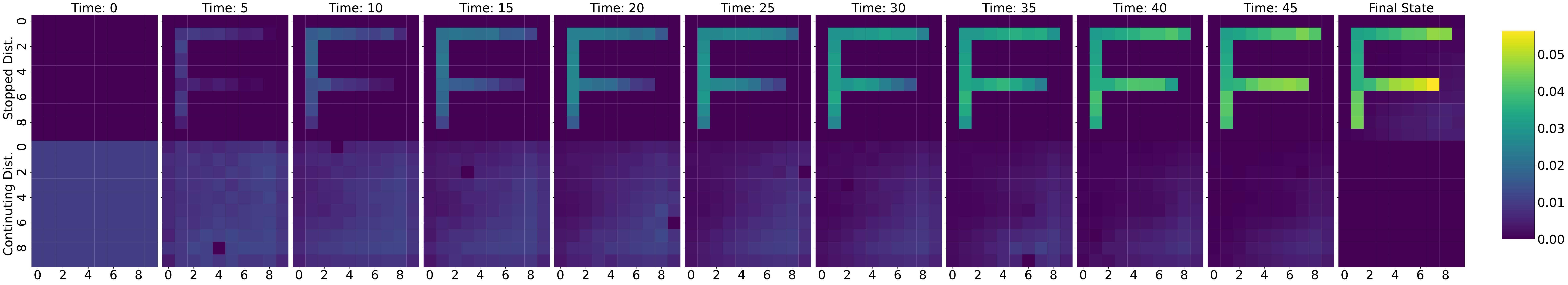}
    \end{subfigure}
    \begin{subfigure}[b]{\columnwidth}
        \includegraphics[width=\columnwidth]{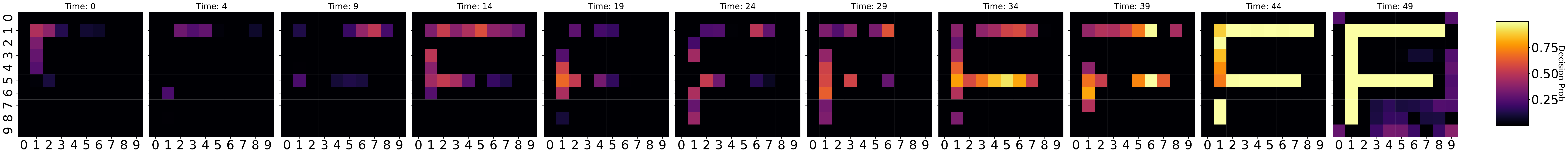}
    \end{subfigure}
    \begin{subfigure}[b]{\columnwidth}
        \includegraphics[width=\columnwidth]{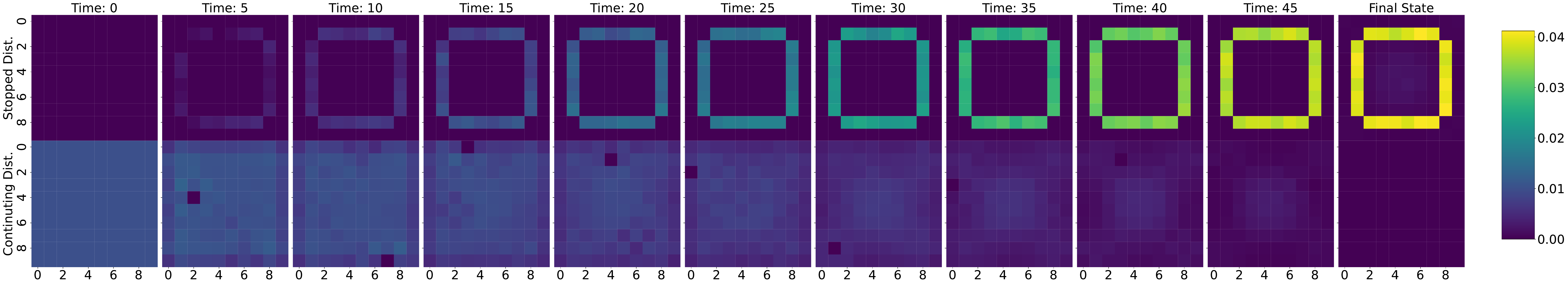}
    \end{subfigure}
    \begin{subfigure}[b]{\columnwidth}
        \includegraphics[width=\columnwidth]{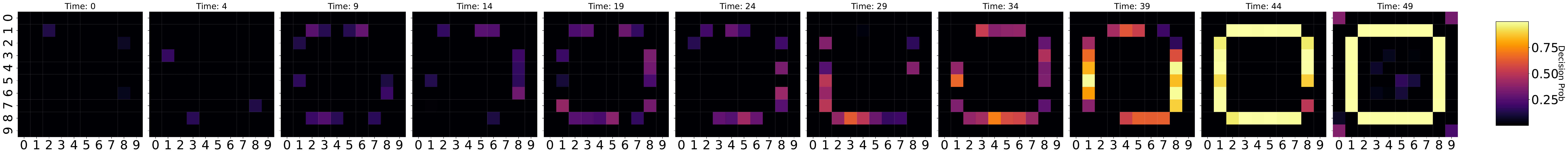}
    \end{subfigure}
    \begin{subfigure}[b]{\columnwidth}
        \includegraphics[width=\columnwidth]{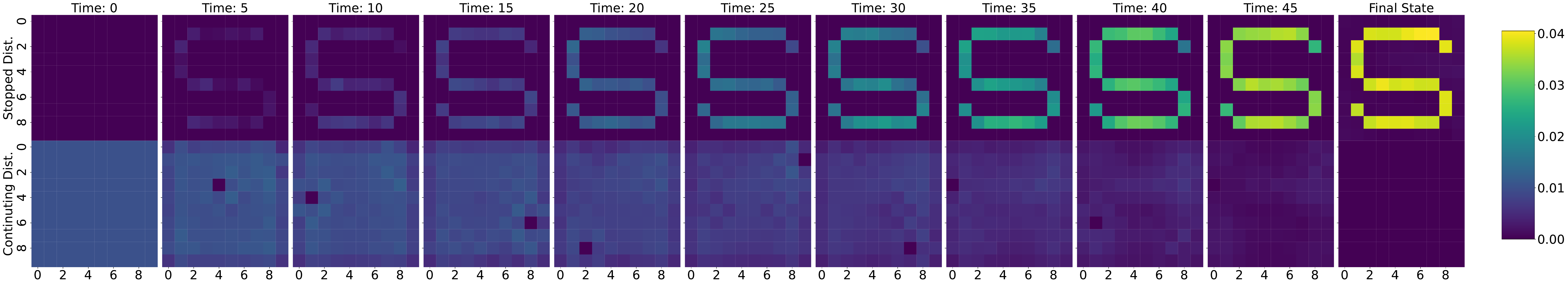}
    \end{subfigure}
    \begin{subfigure}[b]{\columnwidth}
        \includegraphics[width=\columnwidth]{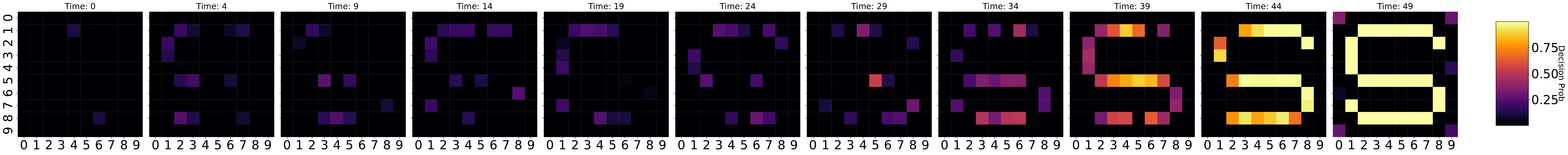}
    \end{subfigure}
    \caption{Example 6. DPP results, asynchronous stopping. Match the Letter ``M'', ``F'', ``O'', ``S'', in a $10\times10$ grid with common noise} \label{fig:DPP_toward_MFOS}
\end{figure}

\begin{figure}[!ht]%
    \centering
    \begin{subfigure}{1\linewidth}
    \includegraphics[width=1\linewidth]{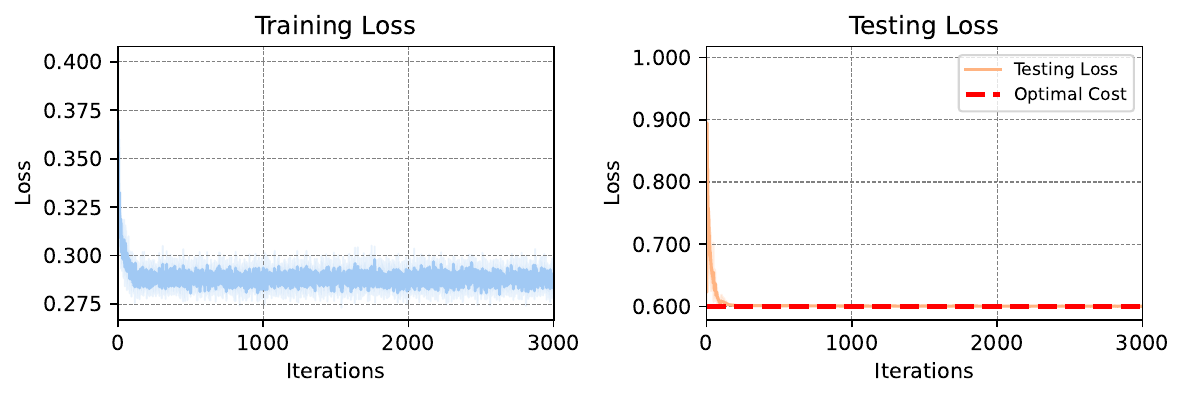}
    \caption{\scriptsize Learning rate $10^{-2}$}
    \end{subfigure}
    
    \begin{subfigure}{1\linewidth}
    \includegraphics[width=1\linewidth]{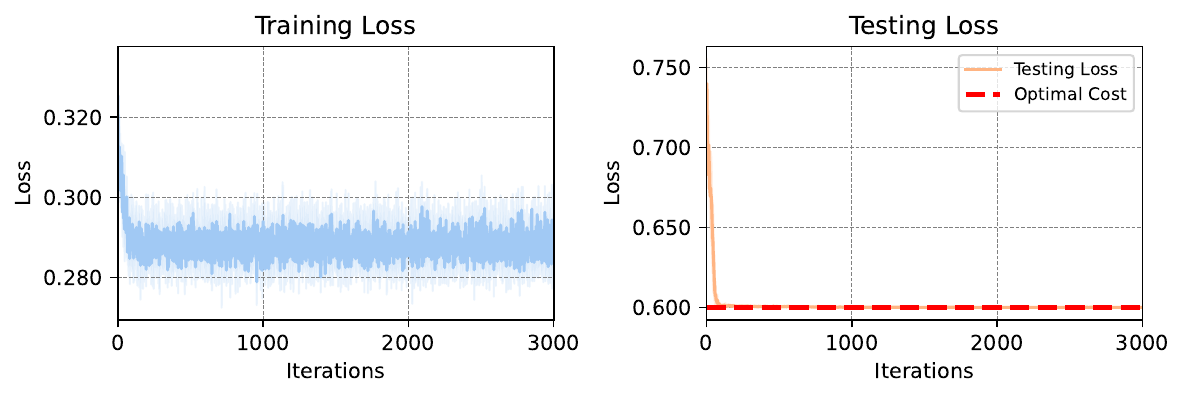} 
        \caption{\scriptsize Learning rate $10^{-3}$}
    \end{subfigure}
    
    \begin{subfigure}{1\linewidth}
    \includegraphics[width=1\linewidth]{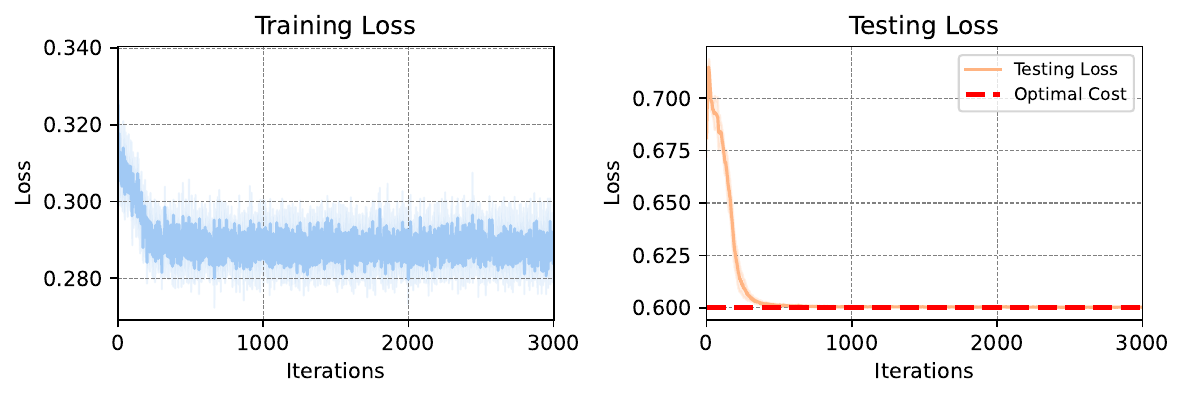} 
        \caption{\scriptsize Learning rate $10^{-4}$}
    \end{subfigure}
    \caption{
    Example 1: Sweep of learning rates. DA results, asynchronous stopping.
    }
    \label{fig:DR_MoveToRight_sweep_loss}
\end{figure}

\begin{figure}[!ht]%
    \centering
    \begin{subfigure}{1\linewidth}
    \includegraphics[width=1\linewidth]{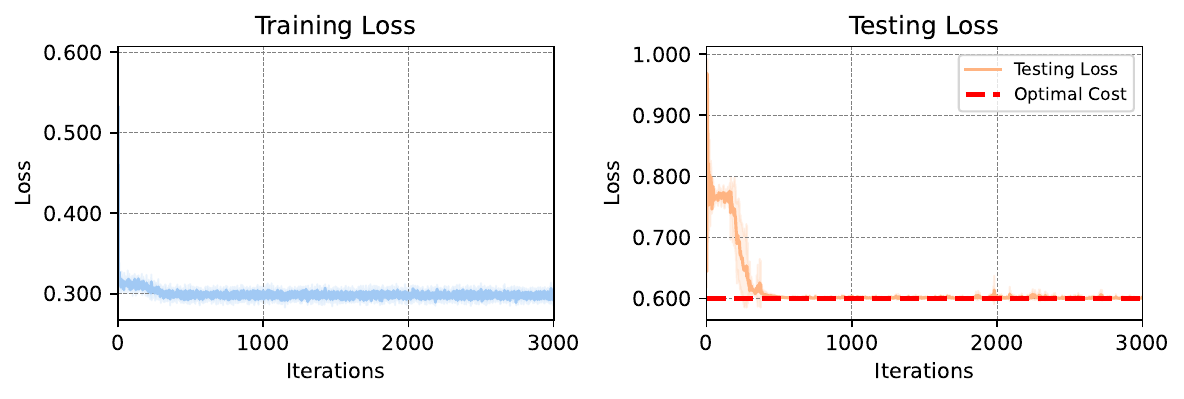}
                \caption{\scriptsize Learning rate $10^{-2}$}
    
    \end{subfigure}
    
    \begin{subfigure}{1\linewidth}
    \includegraphics[width=1\linewidth]{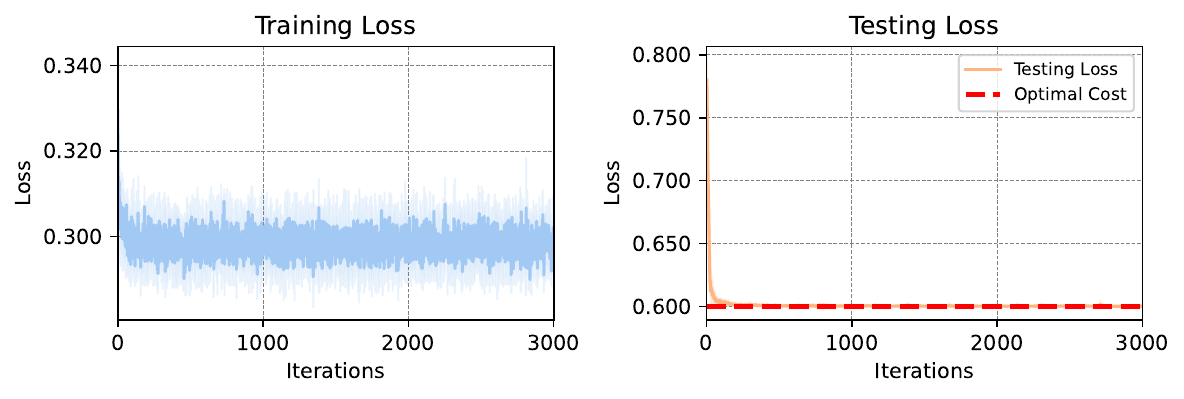} 
            \caption{ \scriptsize Learning rate $10^{-3}$}
    \end{subfigure}
    
    \begin{subfigure}{1\linewidth}
    \includegraphics[width=1\linewidth]{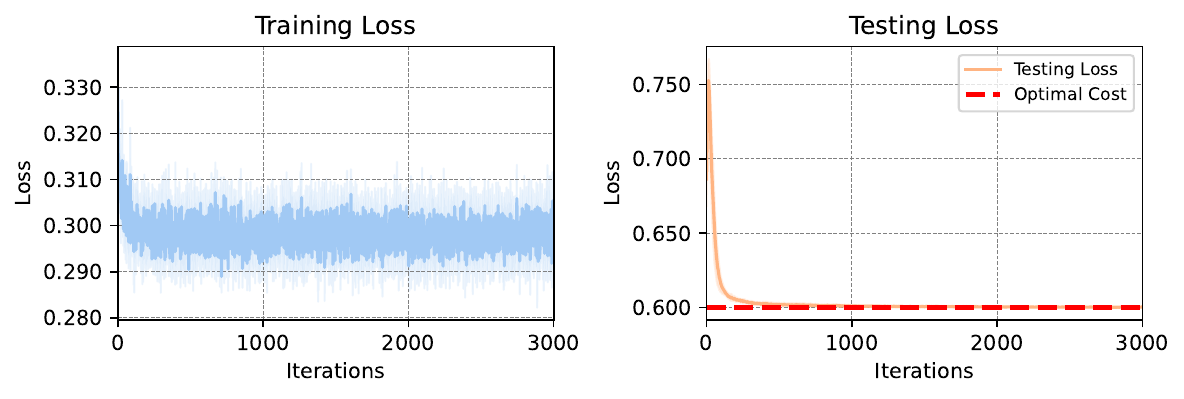} 
            \caption{\scriptsize Learning rate $10^{-4}$}
    \end{subfigure}
    \caption{
    Example 1: Sweep of learning rates. DA results, synchronous stopping.
    }
    \label{fig:DR_MoveToRight_synchron_sweep_loss}
\end{figure}

\begin{figure}[!ht]%
    \centering
    \begin{subfigure}{0.55\columnwidth}
    \includegraphics[width=1\linewidth]{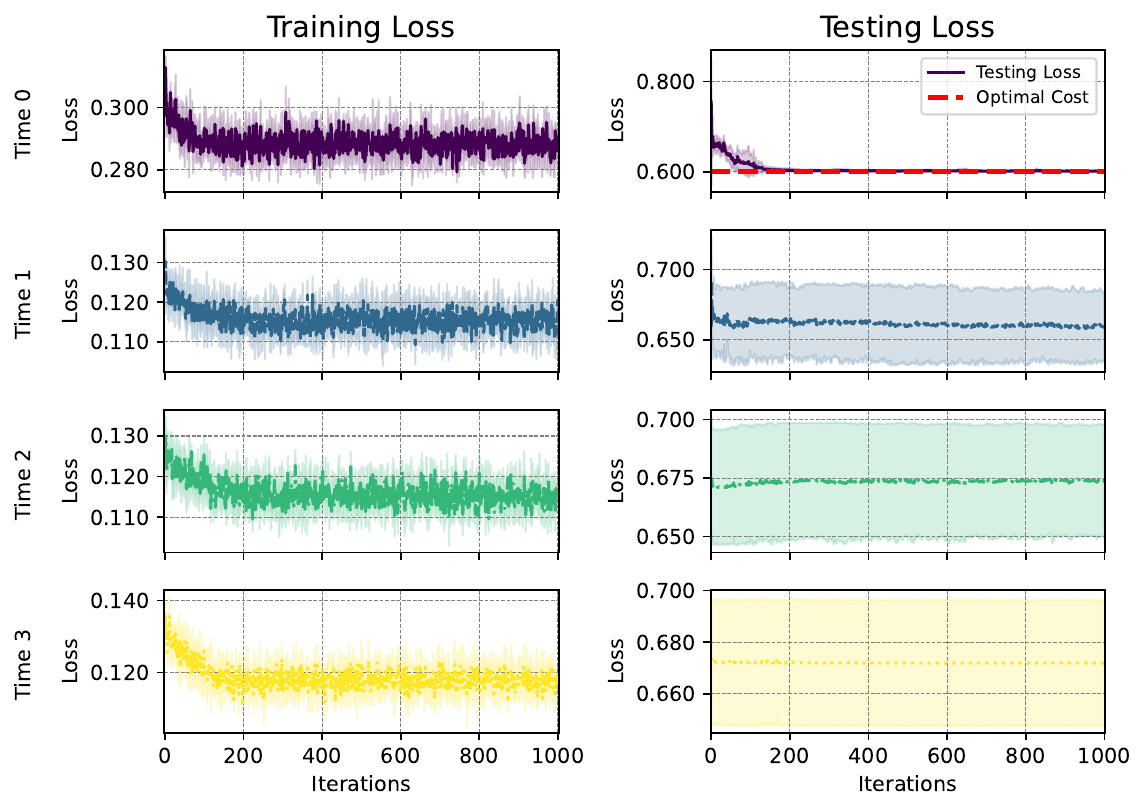}
        \caption{Learning rate $10^{-2}$}
    \end{subfigure}
    
    \begin{subfigure}{0.55\columnwidth}
    \includegraphics[width=1\linewidth]{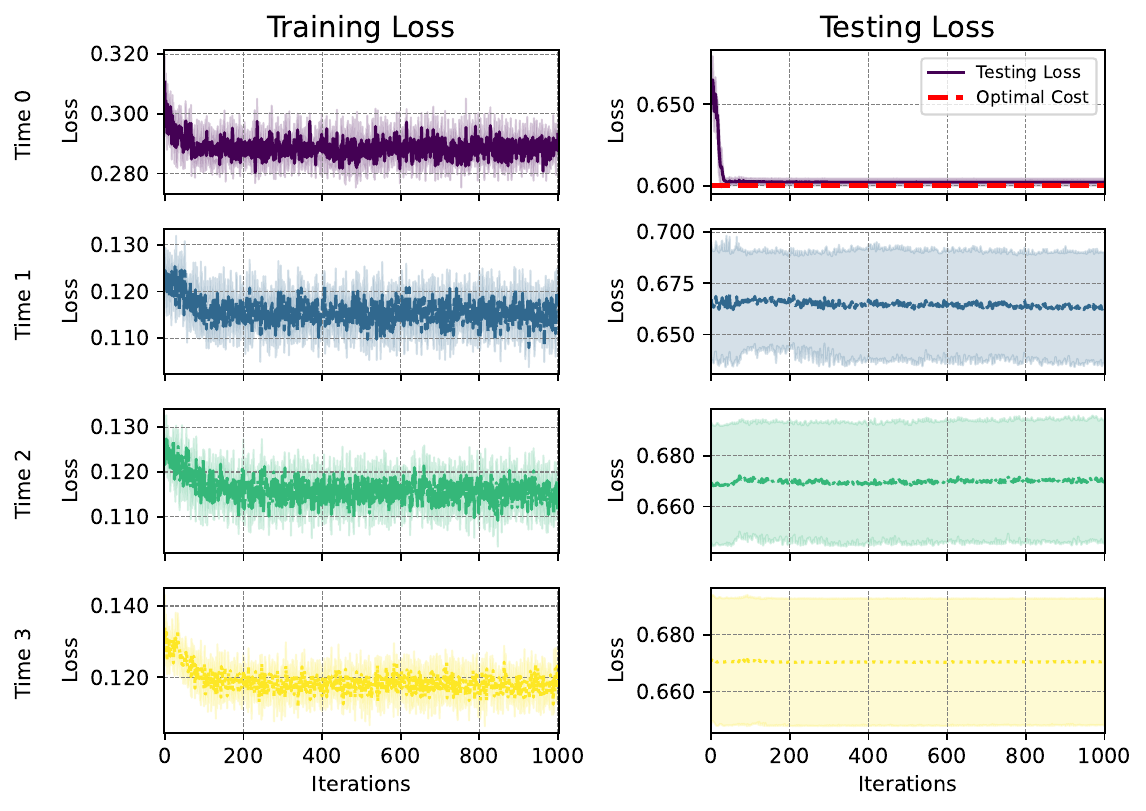} 
        \caption{Learning rate $10^{-3}$}
    \end{subfigure}
    
    \begin{subfigure}{0.55\columnwidth}
    \includegraphics[width=1\linewidth]{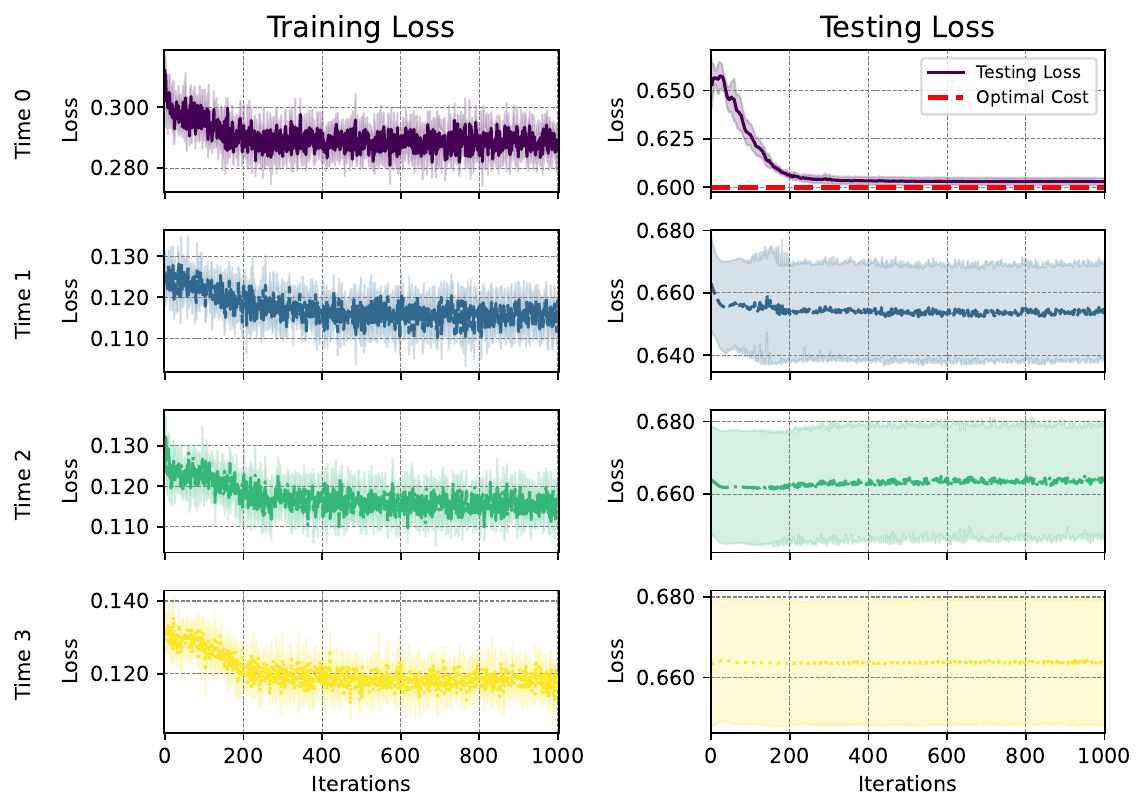} 
        \caption{Learning rate $10^{-4}$}
    \end{subfigure}
    \caption{
    Example 1: Sweep of learning rates. DPP results, asynchronous stopping.
    }
    \label{fig:DPP_MoveToRight_sweep_loss}
\end{figure}

\begin{figure}[!ht]%
    \centering
    \begin{subfigure}{0.55\columnwidth}
    \includegraphics[width=1\linewidth]{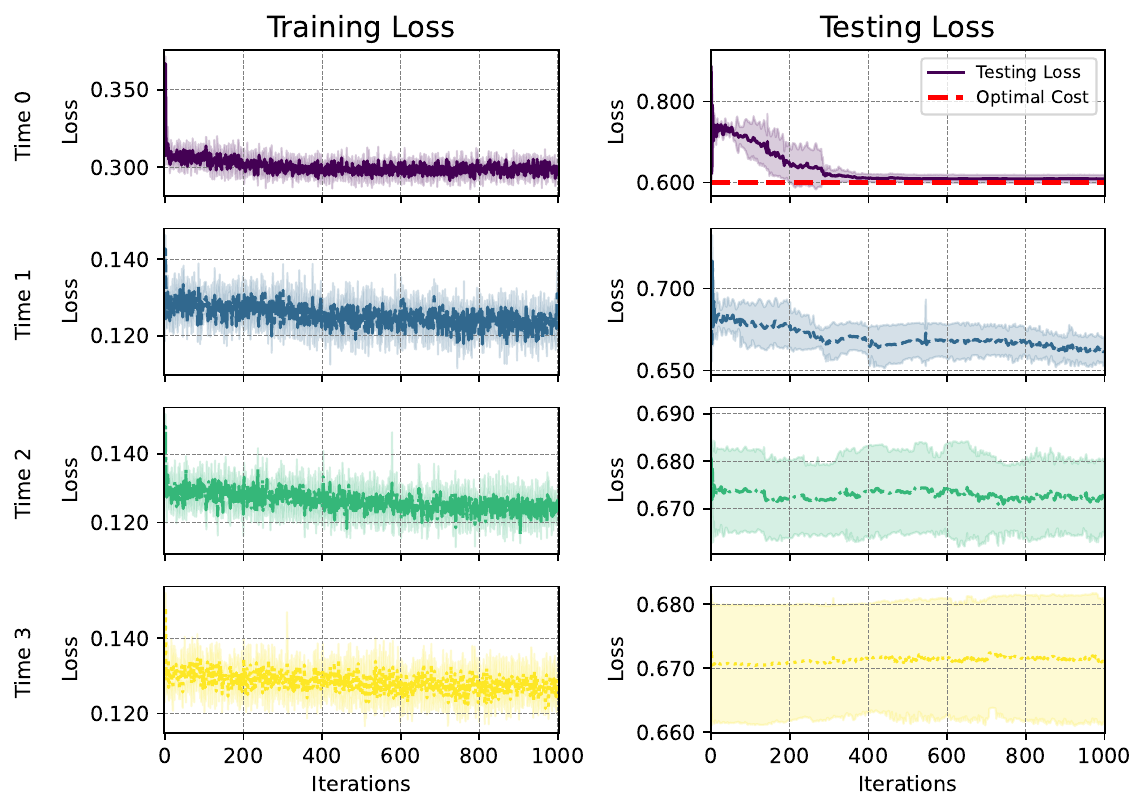}
            \caption{Learning rate $10^{-2}$}
    \end{subfigure}
    
    \begin{subfigure}{0.55\columnwidth}
    \includegraphics[width=1\linewidth]{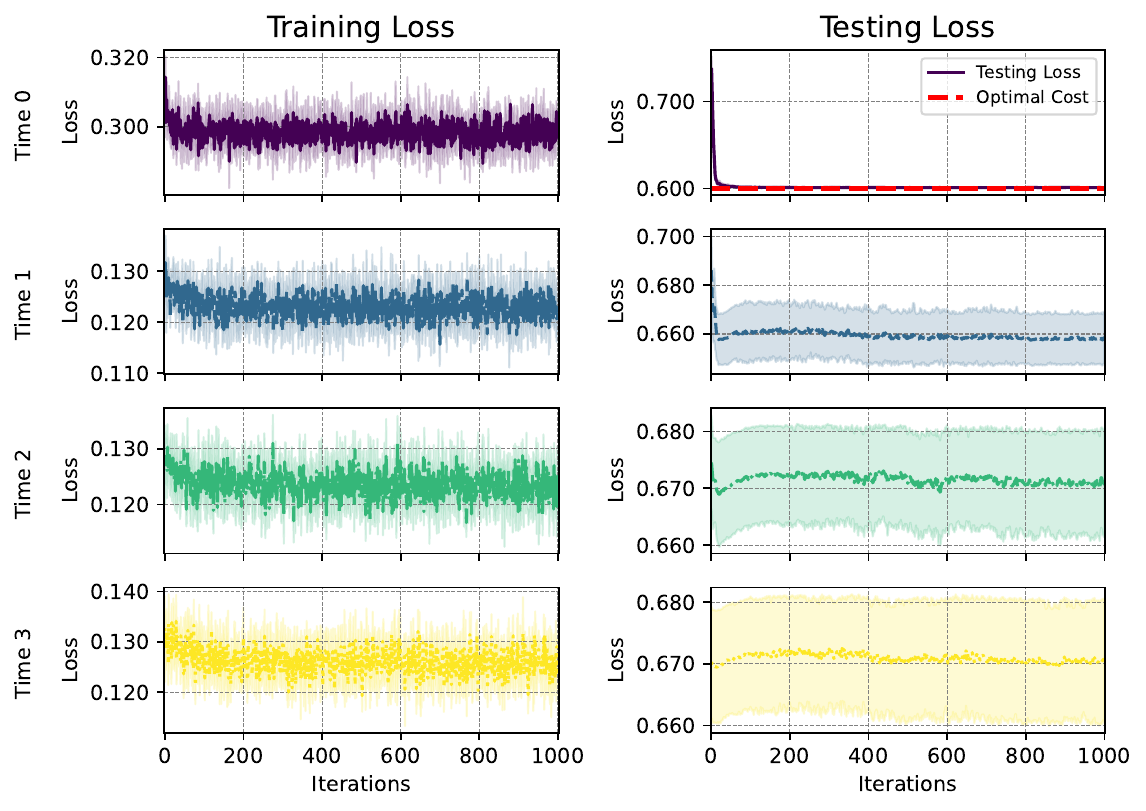} 
            \caption{Learning rate $10^{-3}$}
    \end{subfigure}
    
    \begin{subfigure}{0.55\columnwidth}
    \includegraphics[width=1\linewidth]{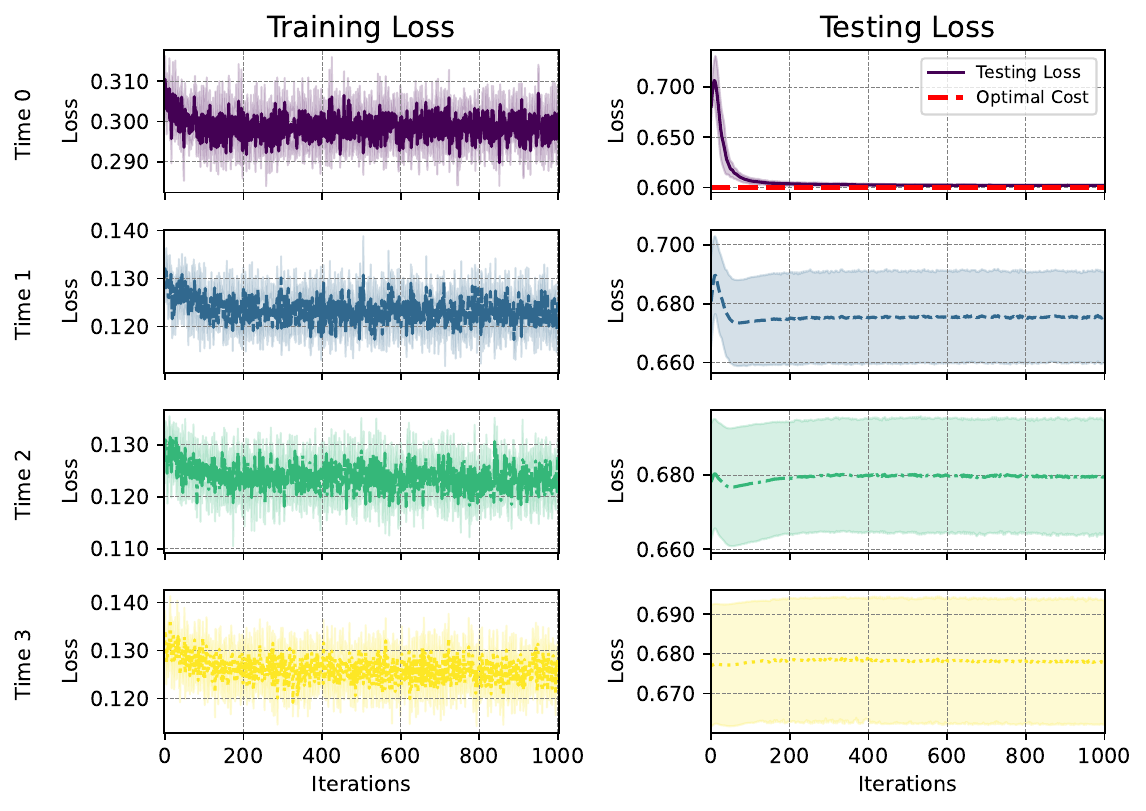} 
            \caption{Learning rate $10^{-4}$}
    \end{subfigure}
    \caption{
    Example 1: Sweep of learning rates. DPP results, synchronous stopping.
    }
    \label{fig:DPP_MoveToRight_synchron_sweep_loss}
\end{figure}

\end{document}